\numberwithin{equation}{section}
\newtheorem{theorem}{Theorem}[section]
\newtheorem{lemma}[theorem]{Lemma}
\newtheorem{corollary}[theorem]{Corollary}
\newtheorem{proposition}[theorem]{Proposition}
\newtheorem{problem}[theorem]{Problem}
\theoremstyle{definition}
\newtheorem{remark}[theorem]{Remark}
\theoremstyle{remark}
\begin{document}
\title{  Structural stability of    supersonic spiral flows with large angular velocity  for the   Euler-Poisson system }
\author{Chunpeng Wang\thanks{School of Mathematics, Jilin University, Changchun, Jilin Province, China, 130012. Email: wangcp@jlu.edu.cn}\and
Zihao Zhang\thanks{School of Mathematics, Jilin University, Changchun, Jilin Province, China, 130012. Email: zhangzihao@whu.edu.cn}}
\date{}

\newcommand{\de}{{\mathrm{d}}}
\def\div{{\rm div\,}}
\def\curl{{\rm curl\,}}
\def\th{\theta}
\newcommand{\ro}{{\rm rot}}
\newcommand{\sr}{{\rm supp}}
\newcommand{\sa}{{\rm sup}}
\newcommand{\va}{{\varphi}}
\newcommand{\me}{\mathcal{E}}
\newcommand{\ml}{\mathcal{V}}
\newcommand{\md}{\mathcal{D}}
\newcommand{\mg}{\mathcal{G}}
\newcommand{\mh}{\mathcal{H}}
\newcommand{\mf}{\mathcal{F}}
\newcommand{\ms}{\mathcal{S}}
\newcommand{\mt}{\mathcal{T}}
\newcommand{\mn}{\mathcal{N}}
\newcommand{\mb}{\mathcal{P}}
\newcommand{\mm}{\mathcal{B}}
\newcommand{\mj}{\mathcal{J}}
\newcommand{\mk}{\mathcal{K}}
\newcommand{\my}{\mathcal{U}}
\newcommand{\mw}{\mathcal{W}}
\newcommand{\mq}{\mathcal{Q}}
\newcommand{\ma}{\mathcal{L}}
\newcommand{\mc}{\mathcal{C}}
\newcommand{\mi}{\mathcal{I}}
\newcommand{\n}{\nabla}
\newcommand{\e}{\tilde}
\newcommand{\m}{\Omega}
\newcommand{\h}{\hat}
\newcommand{\x}{\bar}
 \newcommand{\q}{{\rm R}}
\newcommand{\p}{{\partial}}
\newcommand{\z}{{\varepsilon}}
\renewcommand\figurename{\scriptsize Fig}
\pagestyle{myheadings} \markboth{Smooth supersonic spiral   flows with nonzero vorticity }{Smooth supersonic spiral   flows with nonzero vorticity }\maketitle
\begin{abstract}
     This paper  concerns the structural stability of smooth cylindrically symmetric supersonic spiral flows with large angular velocity for  the
  steady Euler-Poisson system in a concentric cylinder. We  establish  the existence and uniqueness of  some smooth  supersonic Euler-Poisson  flows
      with nonzero angular velocity and vorticity including both cylindrical spiral flows and axisymmetric spiral flows.  The deformation-curl-Poisson decomposition for the steady Euler-Poisson system is utilized to deal with the hyperbolic-elliptic mixed structure in the supersonic
region. For smooth cylindrical supersonic spiral flows,   the key point lies on the  well-posedness  of a boundary value problem for a
linear second order hyperbolic-elliptic coupled system, which is achieved by finding an appropriate multiplier to  obtain the important basic  energy estimates.  The nonlinear structural stability   is established by designing a two-layer iteration and combining the estimates for the hyperbolic-elliptic system and the transport
equations. For smooth axisymmetric supersonic spiral  flows,  we use the special structure of the  steady  Euler-Poisson  system  to derive a priori estimates of  the  linearized second order elliptic system, which enable us to establish the structural stability of the background supersonic flow within  the class of axisymmetric  flows.
\end{abstract}
\begin{center}
\begin{minipage}{5.5in}
Mathematics Subject Classifications 2020: 35G60, 35J66, 35L72, 35M32, 76N10, 76J20.\\
Key words:  smooth supersonic spiral flow, vorticity,  hyperbolic-elliptic coupled system,   deformation-curl-Poisson  decomposition.
\end{minipage}
\end{center}
\section{Introduction and main results  }\noindent
\par  In this paper, we are concerned with smooth supersonic Euler-Poisson flows with large angular
velocity in a concentric cylinder $\mathbb{D}=\bigg\{(x_1,x_2,x_3): r_0<r=\sqrt{x_1^2+x_2^2+x_3^2}<r_1,-1< x_3< 1\bigg\}$, which is governed by the following  three-dimensional steady compressible Euler-Poisson system:
\begin{equation}\label{1-1-1}
\begin{cases}
\begin{aligned}
&\partial_{x_1}(\rho u_1)+\partial_{x_2}(\rho u_2)+\partial_{x_3}(\rho u_3)=0,\\
&\partial_{x_1}(\rho u_1^2)+\partial_{x_2}(\rho u_1 u_2)+\partial_{x_3}(\rho u_1 u_3)+\partial_{x_1} P =\rho \p_{x_1} \Phi,\\
&\partial_{x_1}(\rho u_1u_2)+\partial_{x_2}(\rho u_2^2)+\partial_{x_3}(\rho u_2 u_3)+\partial_{x_2} P=\rho \p_{x_2} \Phi,\\
&\partial_{x_1}(\rho u_1u_3)+\partial_{x_2}(\rho u_2 u_3)+\partial_{x_3}(\rho u_3^2)+\partial_{x_3} P=\rho \p_{x_3} \Phi,\\
&\div(\rho\mathcal{E}{\bf u}+P{\bf u})=\rho{\bf u}\cdot\nabla\Phi,\\
&{\Delta} \Phi= \rho-b({\bf x}).
\end{aligned}
\end{cases}
\end{equation}
In the  system \eqref{1-1-1},
 ${\bf u}=(u_1,u_2,u_3)$ is the macroscopic particle velocity, $\rho$ is the electron density, $P$ is the pressure, $\mathcal{E}$ is the total energy, respectively. $ \Phi $  is  the electrostatic potential generated by the Coulomb force of particles, and $b$  is a positive function representing the density of fixed,
positively charged background ions.  The system \eqref{1-1-1} is closed with the aid of definition of specific total energy and the equation of state
\begin{equation*}
\mathcal{E}=\frac{|{\bf u}|^2}{2}+\mathfrak{e} \quad \text{and}\quad P=P(\rho, \mathfrak{e}),
\end{equation*}
where $\mathfrak{e}$ is the internal energy.
 We consider the ideal polytropic gas for
which the total energy and the pressure are given by
\begin{equation*}
  \mathcal{E}=\frac1 2|{\bf u}|^2+\frac{ P}{(\gamma-1)\rho}
\quad{\rm {and}}\quad P(\rho,S)= e^{S}\rho^{\gamma},
\end{equation*}
where $S$ is  the entropy and $ \gamma> 1 $ is the adiabatic exponent. The Mach
number $M$ is given by
  $$M=\frac{|{\bf u}|}{\sqrt{P_\rho(\rho, S)}}, $$
where
$ \sqrt{P_\rho(\rho, S)}$ is called the local sound speed. Then the flow is subsonic if $M<1$, sonic if $M=1$,
and supersonic if $M>1$.
\par The system \eqref{1-1-1} consists of two parts, a compressible Euler system with source
terms and a Poisson equation, weakly coupled in a nonlinear way.   For the  past ten years, there have been plenty of results on   smooth Euler flows through various nozzles, such as subsonic, subsonic-sonic, sonic-supersonic  and transonic flows, see \cite{CX14,DWX14,DXY11,WX2013,WX15,wang15,WX2016,WX2019,WX2020,wang22,WS15,WX24} and references therein. On the other hand, there have been a few significant progress on the structural stability of steady Euler-Poisson flows in
 nozzles. For subsonic flows, when the background solutions have low Mach numbers and a
small electric field, the existence and uniqueness of three-dimensional subsonic flows with nonzero
vorticity has been obtained in \cite{W14}. Later on,  the
structural stability of subsonic solutions  was proved in \cite{BDX14,BDX15,BDX16,BW18} even
when the background solutions have large variations. For supersonic flows, the authors in \cite{BDXJ21} investigated  the structural stability  of two-dimensional supersonic irrotational  flows  and
flows with nonzero vorticity in a flat nozzle. The unique existence  of  supersonic  potential flows  in  two-dimensional divergent nozzles and three-dimensional cylindrical nozzles was obtained in \cite{DWY23,BP21,BP23,WZ25-1}.    For transonic flows, the structural stability  for
one-dimensional smooth transonic flows with positive acceleration to the steady Euler-Poisson system was proved in \cite{Bae1}.
\par
    By the hodograph transformation, Courant and Friedrichs
    \cite[Section 104]{CF48} transformed the potential equation into a linear second order differential equation on the flow speed and the flow angle plane, and found some radially symmetric flows including circulatory flows, radial
flows and their superpositions spiral flows.  The authors in \cite{WXY21,CMZ23} studied radially symmetric spiral flows with nonzero angular velocity for the steady Euler system and the steady Euler-Poisson system with relaxation effect  in an annulus, respectively. They
gave  a complete classification of all possible flow patterns for inviscid transonic flow with or without shocks.
The existence of the subsonic and subsonic-sonic spiral flow outside a porous
body was established  \cite{WZ22}. On the other hand, these  radially symmetric
flows can be regarded as cylindrically symmetric flows in three-dimensional setting. The authors in \cite{WYZ21}  investigated the structural stability of cylindrically symmetric transonic spiral  flows under the perturbations of suitable boundary conditions.     The  existence and uniqueness of three-dimensional smooth   subsonic  spiral flows to the steady Euler-Poisson system were obtained in \cite{WYZ24}. Recently, the authors in \cite{WZ25} proved the
structural stability of smooth cylindrically symmetric subsonic spiral flows with self-gravitation under
axisymmetric perturbations of suitable boundary conditions.
\par  In this paper, we investigate the structural stability of   a special class of   smooth cylindrically
symmetric supersonic spiral flows  moving from the inner cylinder  to the outer cylinder.  To this end, introduce the cylindrical coordinates
$(r, \theta, z)$:
 \begin{eqnarray}\label{coor}
r=\sqrt{x_1^2+x_2^2},\quad \theta=\arctan \frac{x_2}{x_1},\quad z=x_3.
\end{eqnarray}
Assume that the velocity, the density, the pressure and the eletrostatic
potential are of the form
\begin{equation*}
\begin{aligned}
&{\bf u}({\textbf x})= U_{1}(r,\th,z)\mathbf{e}_r+U_{2}(r,\th,z)\mathbf{e}_\theta+U_{3}(r,\th,z)\mathbf{e}_z, \\
&\rho({\textbf x})=\rho(r,\th,z),\ \ P({\textbf x})=P(r,\th,z),\ \ \Phi({\textbf x})=\Phi(r,\th,z),\ \
\end{aligned}
\end{equation*}
 with
\begin{equation*}
\mathbf{e}_r =
\begin{pmatrix}
\cos \th  \\
  \sin \th \\
  0
\end{pmatrix}, \quad
\mathbf{e}_{\th } =
\begin{pmatrix}
  - \sin \th  \\
  \cos \th\\
  0
  \end{pmatrix}, \quad
  \mathbf{e}_z =
\begin{pmatrix}
 0 \\
  0 \\
  1
\end{pmatrix},
\end{equation*}
where $ U_1 $,  $ U_2 $ and  $ U_3  $ represent the radial velocity, the angular velocity  and the vertical velocity, respectively.
Then
the steady compressible  Euler-Poisson system  in the cylindrical coordinates takes the
form
\begin{eqnarray}\label{1-2}
\begin{cases}
\begin{aligned}
&\partial_r(r\rho U_1)+\partial_{\theta}(\rho U_2)+\p_{z}(r\rho U_3)=0,\\
&\rho\left(U_1\partial_r +\frac{U_2}{r}\partial_{\theta}+U_3\p_{z}\right)U_1+\partial_r P-\frac{\rho U_2^2}r=\rho\p_r\Phi,\\
&\rho\left(U_1\partial_r +\frac{U_2}{r}\partial_{\theta}+U_3\p_{z}\right)U_2
+\frac{\partial_{\theta}P}{r}+\frac{\rho U_1U_2}{r}=\frac{\rho\p_{\th}\Phi} r,\\
&\rho\left(U_1\partial_r +\frac{U_2}{r}\partial_{\theta}+U_3\p_{z}\right)U_3+\partial_{{z}}P=
\rho\partial_{{z}}\Phi,\\
&\rho\left(U_1\partial_r +\frac{U_2}{r}\partial_{\theta}+U_3\p_{z}\right)S=0,\\
&\left(\p_r^2+\frac 1 r\p_r+\frac{1}{r^2}\p_{\th}^2+\p_{z}^2\right)\Phi=\rho-b.
\end{aligned}
\end{cases}
\end{eqnarray}
Furthermore,   the Bernoulli's function $K$ is defined as
\begin{equation}\label{1-2-k}
K=\frac{1}{2}|{\bf u}|^2+\frac{\gamma P}{(\gamma-1)\rho}-\Phi=\frac{1}{2}|{\bf u}|^2+\frac{\gamma e^S\rho^{\gamma-1}}{\gamma-1}-\Phi.
\end{equation}
The  domain $\mathbb{D}$ in the  cylindrical coordinates  becomes
\begin{equation*}
\mathbb{D}=\{(r,\th,z):r_0<r<r_1,\ \th\in\mathbb{T}_{2\pi},\ -1<z<1\},
\end{equation*}
where  $\mathbb{T}_{2\pi}$ denotes the one-dimensional torus with period $2\pi$.
\subsection{Background supersonic flows}\noindent
\par   Fix $b$ to be a constant $b_0>0$, the background flow is described by smooth functions of the form
\begin{equation*}
{\bf u}({\textbf x})=\bar U_1(r)\mathbf e_r+\bar U_2(r)\mathbf e_{\th},
 \ \ \rho({\textbf x})=\bar\rho(r),\ \  S({\textbf x})=\bar S(r),\ \  \Phi({\textbf x})=\bar \Phi(r), \ \  r\in[r_0, r_1],
\end{equation*}
which solves the following system:
\begin{equation}\label{1-3}
	\begin{cases}
\begin{aligned}
		&(r \bar \rho  \bar U_1)'(r)=0, \ \ & r\in[r_0, r_1],\\
     	 &\bigg(\bar U_1 \bar U_1'+\frac{1}{\bar\rho}\bar P'\bigg)(r)-\frac{1}{r} \bar U_2^2(r)= \bar E(r),\ \ & r\in[r_0, r_1],\\
    	 &(\bar U_1 \bar U_2')(r)+\frac{1}{r} (\bar U_1\bar U_2)(r)=0,\ \ & r\in[r_0, r_1],\\
     	 &(\bar U_1 \bar S')(r)=0,\ \ & r\in[r_0, r_1],\\
    	 &\bar E'(r)+\frac{1}{r}\bar E(r)=\bar \rho(r)-b_0,\ \ & r\in[r_0, r_1],
    \end{aligned}
	\end{cases}
\end{equation}
with the boundary conditions at the entrance $ r = r_0 $:
\begin{equation}\label{1-4}
\bar\rho(r_0)=\rho_0,\quad
\bar U_{1}(r_0)=U_{1,0},\quad
\bar U_{2}(r_0)=U_{2,0},\quad \bar S(r_0)=S_0, \quad
\bar E(r_0)=E_0.
\end{equation}
Here
$$ \bar P(r)=(e^{\bar S}\bar \rho^\gamma)(r) \quad {\rm{and}} \quad \bar E(r)=\bar\Phi'(r), \ \  r\in[r_0, r_1].$$
 Define $J_1=r_0\rho_0U_{1,0}$ and $J_2=r_0U_{2,0}$. Then
\begin{equation*}
(\bar\rho\bar U_1)(r)=\frac{J_1}{r},\quad \bar U_2(r)=\frac{J_2}{r},\quad
\bar S(r) = S_0, \ \  r\in[r_0, r_1].
\end{equation*}
  Hence \eqref{1-3} can be reduced to the following ODE system for $(\bar\rho, \bar E)$:
\begin{equation}\label{1-5}
	\begin{cases}
\begin{aligned}
		&\bar\rho'(r)=\bar\rho(r)\frac{\bar U_1^2(r)+\bar U_2^2(r)+ r\bar E(r) }{r( \bar c^2(r)-\bar U_1^2(r))}, \ \ & r\in[r_0, r_1],\\
		&(r\bar E)'(r)=r(\bar\rho(r)-b_0),  \ \ & r\in[r_0, r_1],\\
\end{aligned}
	\end{cases}
\end{equation}
with
\begin{equation*}
\bar U_1(r)=\frac{J_1}{r\bar\rho(r)},\ \  \bar c^2(r)=\gamma e^{ S_0}\bar\rho^{\gamma-1}(r), \ \ r\in[r_0, r_1].
 \end{equation*}
\par We have the following well-posedness.
\begin{proposition}\label{pro1}
    Assume that $ \gamma\geq 3 $.  Fix  $ r_0>0 $ and $ b_0>0 $. For given constants    $\rho_0>0$, $ U_{1,0}>0$, $U_{2,0}\neq 0$, $S_0>0$ and $E_0>0$ satisfying
    \begin{equation*}
   \rho_0>b_0 \ \ {\rm{and}} \ \ U_{1,0}^2<\gamma e^{S_0}\rho_0^{\gamma-1}<U_{2,0}^2<2r_0^2\rho_0,
    \end{equation*}
     there exists a constant $  R_0$ depending only on $(\gamma,r_0,\rho_0,U_{1,0},U_{2,0},S_0,E_0) $  such that for any  $ r_1\in(r_0,r_0+R_0 )$,
  the initial value problem \eqref{1-3} and \eqref{1-4} has a unique smooth  solution $(\bar\rho,\bar  U_{1}, \bar U_{2},  \bar S, \bar E)$ satisfying
  \begin{equation}\label{1-6}
   \bar U_1^2(r)<\bar c^2(r)<\bar U_2^2(r), \ \  r\in[r_0, r_1].
  \end{equation}
\end{proposition}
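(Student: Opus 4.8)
\medskip
\noindent\textbf{Proof strategy.}
The plan is to use the reduction already carried out above: solving \eqref{1-3}--\eqref{1-4} is equivalent, via $\bar U_1=J_1/(r\bar\rho)$, $\bar U_2=J_2/r$, $\bar S\equiv S_0$, $\bar P=e^{S_0}\bar\rho^\gamma$, to solving the first order system \eqref{1-5} for $Y=(\bar\rho,\bar E)$ with $Y(r_0)=(\rho_0,E_0)$. Writing \eqref{1-5} as $Y'(r)=\mathcal F(r,Y(r))$ with
\begin{equation*}
\mathcal F(r,\rho,E)=\left(\rho\,\frac{\frac{J_1^2}{r^2\rho^2}+\frac{J_2^2}{r^2}+rE}{r\bigl(\gamma e^{S_0}\rho^{\gamma-1}-\frac{J_1^2}{r^2\rho^2}\bigr)}\,,\ \ \rho-b_0-\frac{E}{r}\right),
\end{equation*}
one checks that $\mathcal F$ is real-analytic on the open set $\mathcal A=\{(r,\rho,E):r>0,\ \rho>0,\ \gamma e^{S_0}\rho^{\gamma+1}r^2>J_1^2\}$, and that the hypotheses $\rho_0>0$ and $U_{1,0}^2<\gamma e^{S_0}\rho_0^{\gamma-1}$ amount precisely to $(r_0,\rho_0,E_0)\in\mathcal A$, i.e.\ to the Cauchy problem being nonsingular at the entrance. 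The Cauchy--Lipschitz theorem then yields a unique maximal $C^\infty$ (indeed real-analytic) solution $(\bar\rho,\bar E)$ on some interval $[r_0,r_*)$, $r_*>r_0$, with graph in $\mathcal A$, hence a unique smooth $(\bar\rho,\bar U_1,\bar U_2,\bar S,\bar E)$ solving \eqref{1-3}--\eqref{1-4}.

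\medskip
It then remains to propagate the strict inequalities. Let $R_0$ be the supremum of the $\delta>0$ for which $\bar\rho>b_0$, $\bar E>0$ and \eqref{1-6} hold throughout $[r_0,r_0+\delta]$; since all of these are strict at $r_0$ (using $\rho_0>b_0$, $E_0>0$ and $\gamma e^{S_0}\rho_0^{\gamma-1}<U_{2,0}^2$), continuity gives $R_0>0$ and $r_0+R_0\le r_*$. On $[r_0,r_0+R_0)$ the numerator $\bar U_1^2+\bar U_2^2+r\bar E\ge r\bar E>0$ and the denominator $\bar c^2-\bar U_1^2>0$ in \eqref{1-5}, so $\bar\rho'>0$; then $(r\bar E)'=r(\bar\rho-b_0)>0$ keeps $r\bar E\ge r_0E_0>0$, hence $\bar E>0$, while $\bar\rho$ increases and $\bar c^2<\bar U_2^2\le J_2^2/r_0^2$ bounds it above. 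Differentiating and inserting \eqref{1-5} gives
\begin{equation*}
\frac{\de}{\de r}\bigl(\bar c^2-\bar U_1^2\bigr)=\frac{\bar\rho'}{\bar\rho}\bigl((\gamma-1)\bar c^2+2\bar U_1^2\bigr)+\frac{2\bar U_1^2}{r}>0,\qquad
\frac{\de}{\de r}\bigl(\bar U_2^2-\bar c^2\bigr)=-\frac{2\bar U_2^2}{r}-(\gamma-1)\bar c^2\,\frac{\bar\rho'}{\bar\rho}<0,
\end{equation*}
so the ``radially subsonic'' gap $\bar c^2-\bar U_1^2$ only grows and cannot close, and the sole inequality in \eqref{1-6} that can degenerate is $\bar c^2<\bar U_2^2$. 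Feeding the above bounds on $(\bar\rho,\bar E)$ (which in particular keep $\bar c^2-\bar U_1^2\ge\gamma e^{S_0}\rho_0^{\gamma-1}-U_{1,0}^2>0$ and bound $\bar E$ on bounded $r$-intervals) into the second identity shows its right-hand side is bounded on $[r_0,r_0+1]$ by a constant depending only on $(\gamma,r_0,b_0,\rho_0,U_{1,0},U_{2,0},S_0,E_0)$; an elementary estimate then produces an explicit positive lower bound for $R_0$ depending only on the same data (and the initial gap $U_{2,0}^2-\gamma e^{S_0}\rho_0^{\gamma-1}$). Finally, for any $r_1\in(r_0,r_0+R_0)$ the interval $[r_0,r_1]$ lies compactly inside $[r_0,r_0+R_0)$, so the solution is smooth up to $r_1$ and, by continuity, $\bar c^2-\bar U_1^2$ and $\bar U_2^2-\bar c^2$ attain positive minima on $[r_0,r_1]$, which is exactly \eqref{1-6}.

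\medskip
The argument is essentially classical ODE theory, so I do not expect a genuine analytic obstacle; the care is all in the bookkeeping. The first delicate point is translating the algebraic inequalities of the hypotheses into geometric ones: $U_{1,0}^2<\gamma e^{S_0}\rho_0^{\gamma-1}$ is exactly what places $(r_0,\rho_0,E_0)$ in the regularity region $\mathcal A$ (equivalently, prevents $\bar c^2-\bar U_1^2$ from vanishing at $r_0$), and $\gamma e^{S_0}\rho_0^{\gamma-1}<U_{2,0}^2$, together with $\rho_0>b_0$ and $E_0>0$, is what starts the monotonicity bootstrap. The second, and the part I would be most careful with, is showing that $R_0$ can be chosen to depend only on the listed data; this rests on the a priori bounds for $(\bar\rho,\bar E)$ valid as long as \eqref{1-6} holds, and it is here that the standing assumptions $\gamma\ge3$ and $U_{2,0}^2<2r_0^2\rho_0$ are convenient for uniform control (although for this local existence statement alone they are not needed beyond that; they enter essentially in the subsequent stability analysis).
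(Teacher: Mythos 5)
Your proof is correct and takes essentially the same approach as the paper: local existence by Cauchy--Lipschitz for the reduced system \eqref{1-5}, then a monotonicity/continuation argument to propagate the supersonic condition. The only cosmetic difference is that you track the differences $\bar c^2-\bar U_1^2$ and $\bar U_2^2-\bar c^2$ directly, whereas the paper works with the Mach numbers $\bar M_1^2,\bar M_2^2$ via the explicit ODEs \eqref{1-7}--\eqref{1-m}; your formulation also makes transparent a point the paper leaves implicit, namely that the hypothesis $\gamma\ge 3$ is merely a convenience here (it makes every term in the bracket of \eqref{1-m} individually nonnegative) and is not needed for this local proposition.
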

\begin{proof}
Firstly, the existence and uniqueness of local solution to \eqref{1-5} follow
directly from the standard theory of ODE system.
It follows from   $\rho_0>b_0$ that
  $ (r\bar E)'(r_0)>0 $. Then one can  deduce that for some constant $R_1>0$,
\begin{equation}\label{1-6-1}
 r\bar E(r)>r_0E_0>0, \quad r\in[r_0, r_0+R_1].
\end{equation}
 Denote the Mach numbers by
\begin{equation*}
\bar M_1^2(r)=\frac{\bar U_1^2(r)}{ \bar c^2(r)}, \ \
\bar M_2^2(r)=\frac{\bar U_2^2(r)}{\bar c^2(r)}, \  \ \bar  {\bf M}(r)=(\bar M_1(r), \bar M_2(r))^T.
\end{equation*}
Then a direct computation shows that
\begin{eqnarray}\label{1-7}
	&&\frac{d}{d r}\bar M_1^2(r)=-\frac{\bar M_1^2(r)}{r(1-\bar M_1^2(r))}
\left(
(\gamma-1)\bar M_1^2(r)+(\gamma+1)\bar M_2^2(r)+(\gamma+1)\frac{r\bar E(r) }{\bar c^2(r)}+2
\right),\\\label{1-m}
&&\frac{d}{d r}\bar M_2^2(r)=-\frac{\bar M_2^2(r)}{r(1-\bar M_1^2(r))}
\left(
(\gamma-3)\bar M_1^2(r)+(\gamma-1)\bar M_2^2(r)+(\gamma-1)\frac{r\bar E(r) }{\bar c^2(r)}+2
\right).
\end{eqnarray}
	Since $\bar M_1^2(r_0)<1$,     \eqref{1-7},  together with \eqref{1-6-1}, yields that
 \begin{equation}\label{1-7-1}
 \bar M_1^2(r)\leq \bar M_1^2(r_0)<1, \ \  r\in [r_0,r_0+R_1].
 \end{equation}
 Thus the flow is  subsonic  in the $r$-direction.  Note that $ \gamma\geq 3 $. Then   it follows from \eqref{1-6-1}, \eqref{1-m} and \eqref{1-7-1}  that
  \begin{equation*}
 \frac{d}{d r}\bar M_2^2(r)<0, \ \  r\in [r_0,r_0+R_1].
 \end{equation*}
 This, together with $\bar M_2^2(r_0)>1$, yields that for some constant $ R_0\in(0,R_1)$, one has
  \begin{equation*}
 \bar M_2^2(r_0)\geq\bar M_2^2(r)>1, \ \  r\in [r_0,r_0+R_0],
 \end{equation*}
    from which one obtains
     \begin{equation*}
 |\bar{\bf M}|^2(r)>1, \ \  r\in [r_0,r_0+R_0].
 \end{equation*}
    Therefore, the flow is  supersonic. Furthermore,  it follows from the first equation in \eqref{1-5} that
$$
(\ln\bar\rho)'(r)>0, \ \  r\in[r_0, r_0+R_0],
$$
 which implies
  $$ \bar \rho(r)\geq\rho_0>0, \ \ r\in[r_0, r_0+R_0]. $$
 By choosing $ r_1\in (r_0,r_0+R_0) $, we complete the proof of Proposition \ref{pro1}.
\end{proof}
 Define
\begin{eqnarray}\label{1-q}
 && \bar P(r)=e^{S_0} \bar\rho^{\gamma}(r) \ \ {\rm{and}} \ \
 \bar\Phi(r)= \int_{r_0}^{r} \bar E(s) \de s, \ \  r\in[r_0, r_1].
\end{eqnarray}
 Furthermore, the  Bernoulli's function is given by
\begin{eqnarray}\label{1-9}
\bar K(r)=\frac{1}{2}(U_1^2+U_2^2)
+\frac{\gamma e^{S_0}\bar\rho^{\gamma-1}(r)}{\gamma-1}-\bar\Phi(r), \ \  r\in[r_0, r_1]. \end{eqnarray}
It follows from the second equation in \eqref{1-3} that
 \begin{eqnarray*}
 \bar K(r)=K_0, \ \ K_0=\frac{1}{2}(U_{1,0}^2+U_{2,0}^2)+\frac{\gamma e^{S_0}\rho_0^{\gamma-1}}{\gamma-1}, \ \  r\in[r_0, r_1].
 \end{eqnarray*}
 \par The  cylindrically
symmetric  solution       $(\bar\rho,\bar  U_1,\bar  U_2, \bar P, \bar \Phi)$   with nonzero angular veloicty  is called the  background supersonic  solution  associated with the entrance data $(b_0,\rho_0,U_{1,0},U_{2,0},S_0,E_0) $.
  The main purpose of this work is to investigate the structural stability of the
background solution under perturbations of suitable boundary conditions. However, due to the effect of nonzero angular velocity, it seems quite difficult to analyze the structural stability of the background supersonic flow under generic three-dimensional perturbations even in the case of isentropic irrotational solutions. Therefore, we consider here two classes of perturbations with special symmetries.
\subsection{Smooth cylindrical  supersonic spiral
flows}\noindent
\par \par Firstly, we investigate the structural stability of the background
solution under  cylindrical
perturbations of suitable boundary conditions. More precisely, we find the solution to \eqref{1-2} with the form $(U_1(r,\theta),  U_2(r,\theta),U_3(r,\theta)\equiv 0, \rho(r,\theta), P(r,\theta),\Phi(r,\theta))$ satisfying
\begin{eqnarray}\label{1-2-c}
\begin{cases}
\begin{aligned}
&\partial_r(r\rho U_1)+\partial_{\theta}(\rho U_2)=0,\\
&\rho\left(U_1\partial_r +\frac{U_2}{r}\partial_{\theta}\right)U_1+\partial_r P-\frac{\rho U_2^2}r=\rho\p_r\Phi,\\
&\rho\left(U_1\partial_r +\frac{U_2}{r}\partial_{\theta}\right)U_2
+\frac{\partial_{\theta}P}{r}+\frac{\rho U_1U_2}{r}=\frac{\rho\p_{\th}\Phi} r,\\
&\rho\left(U_1\partial_r +\frac{U_2}{r}\partial_{\theta}\right)S=0,\\
&\left(\p_r^2+\frac 1 r\p_r+\frac{1}{r^2}\p_{\th}^2\right)\Phi=\rho-b.
\end{aligned}
\end{cases}
\end{eqnarray}
The  domain $\mathbb{D}$ is  simplified as
\begin{equation*}
\Omega=\{(r,\th):r_0<r<r_1,\ \th\in\mathbb{T}_{2\pi}\}.
\end{equation*}
The entrance and exit  are denoted by
\begin{equation*}
\begin{aligned}
\Gamma_{en}=\{(r,\th):r=r_0,\  \th\in\mathbb{T}_{2\pi}\},\quad
		\Gamma_{ex}=\{(r,\th):r=r_1, \  \th\in\mathbb{T}_{2\pi}\}.\\
\end{aligned}
\end{equation*}
\par We start with cylindrical  irrotational  flows.  For  the isentropic irrotational flows, $ S $ is a constant and  $ \text{curl }{\bf u}=0 $.  Without loss of  generality, we assume $ S=S_0 $. In the Euclidean coordinates,
by the vector identity ${\bf u}\cdot\nabla {\bf u}= \nabla \frac12 |{\bf u}|^2- {\bf u}\times \text{curl }{\bf u}$, the momentum equations in \eqref{1-1-1} imply
\begin{eqnarray}\label{momentum}
\nabla \bigg(\frac{1}2|{\bf u}|^2 +\frac{\gamma e^{S_0}\rho^{\gamma-1}}{\gamma-1}-\Phi\bigg)={\bf u}\times \text{curl }{\bf u}=0, \ \  \text{in}\  \ \mathbb{D},
\end{eqnarray}
 from which one obtains  that $ \frac{1}2|{\bf u}|^2 +\frac{\gamma e^{S_0}\rho^{\gamma-1}}{\gamma-1}-\Phi$ is a constant. For simplicity, we assume
 that
\begin{eqnarray}\label{momentum-1}
 \frac{1}2|{\bf u}|^2 +\frac{\gamma e^{S_0}\rho^{\gamma-1}}{\gamma-1}-\Phi=K_0,  \ \  \text{in}\  \ \mathbb{D}.
\end{eqnarray}
In terms of the  cylindrical  coordinates, it follows from \eqref{momentum-1} that\begin{eqnarray*}
\rho=
\bigg(\frac{\gamma-1}{\gamma e^{S_0}}\bigg(K_0+\Phi-\frac{1}{2}\left(U_1^2+U_2^2\right)\bigg)\bigg)
^{\frac{1}{\gamma-1}}, \ \ \text{in}\ \ \Omega.
\end{eqnarray*}
Then the system  \eqref{1-2-c} in $\m$ can be reduced to   the the following system, which is called the potential flow
model of the steady Euler-Poisson system:
\begin{equation}\label{3-1}
\begin{cases}
\partial_r(r\rho U_1)+\partial_\theta(\rho U_2)=0,\\
\partial_r(r U_2)-\partial_\theta U_1=0,\\
\left(\p_r^2+\frac 1 r\p_r+\frac{1}{r^2}\p_{\th}^2\right)\Phi=\rho-b.\\
\end{cases}
\end{equation}
\par For cylindrical  irrotational  flows, we investigate the following problem.
\begin{problem}\label{probl1}
Given functions $(b, U_{1, en}, U_{2, en},E_{ en}, \Phi_{ ex})$ sufficiently close to $(b_0,U_{1,0},U_{2,0},E_0,\bar\Phi(r_1))$,
find  a  solution
$( U_1,U_2, \Phi) $ to the nonlinear system \eqref{3-1} in $\m $ satisfying  the following properties.
\begin{enumerate}[ \rm (1)]
 \item
 $\rho>0$ and $   U_1>0 $ in $ \overline{\m}$.
 \item $( U_1,U_2, \Phi) $ satisfies the boundary conditions:
\begin{equation}\label{1-c}
\begin{cases}
(U_1, U_2,   \p_r\Phi)(r_0,\th)=(U_{1,en}, U_{2,en},  E_{en})(\th),\ \ &{\rm{on}}\ \ \Gamma_{en},\\
\Phi(r_1,\th)=\Phi_{ex}(\th), \ \ &{\rm{on}}\ \ \Gamma_{ex}.\\
\end{cases}
\end{equation}
\item $U_1^2+U_2^2>c^2(K_0, U_1,  U_2,\Phi)$,  i.e., the flow corresponding to $(U_1,U_2,\Phi)$ is supersonic in $\overline{\m}$. Here the
 the  sound speed $ c(K_0, U_1,  U_2,\Phi) $ is given by
\begin{equation}\label{1-c-z}
c(K_0, U_1,  U_2,\Phi)=\sqrt{P_\rho(\rho, S_0)}=\sqrt{(\gamma-1)\bigg(K_0+\Phi-\frac{1}{2}\left( U_1^2+ U_2^2\right)\bigg)}.
\end{equation}
\end{enumerate}
\end{problem}

\par The first main result in this paper  states the structural stability of  the background supersonic solution under    cylindrical
perturbations of suitable boundary conditions
 for the potential flow model, which also yields the unique existence  of smooth  supersonic  irrotational flows  to the  system \eqref{3-1}.
\begin{theorem}\label{th1}
 For given functions $b\in C^2(\overline{\m})$, $U_{ 1,en}\in C^3(\mathbb{T}_{2\pi})$ with  $\displaystyle{\min_{\mathbb{T}_{2\pi} }U_{1, en}>0 }$ and $(U_{2, en},E_{ en},\Phi_{ ex})$\\$\in \left(C^4(\mathbb{T}_{2\pi})\right)^3$, define
\begin{equation}\label{1-9-a}
\begin{aligned}
 \omega_1(b,U_{ 1,en},U_{2, en},E_{ en},\Phi_{ ex})&:=\|b-b_0\|_{C^2(\overline{\m})}+\|U_{1, en} -U_{1,0}\|_{C^3(\mathbb{T}_{2\pi})}\\
 & \ \ \quad +\|(U_{2, en},E_{ en},\Phi_{ ex})-(U_{2,0},E_0,\bar\Phi(r_1))\|_{C^4(\mathbb{T}_{2\pi})}.\\
  \end{aligned}
  \end{equation}
  For each $\epsilon_0\in(0,R_0)$ with $R_0>0$ given in Proposition \ref{pro1}, there exists a positive  constant $\bar{r}_1\in(r_0,r_0+R_0-\epsilon_0]$ so  that for $r_1\in(r_0, \bar{r}_1)$,
 if
$(b, U_{1, en}, U_{2, en},  E_{ en}, \Phi_{ ex})$ satisfies
\begin{equation}\label{1-t-1}
\omega_1(b,U_{ 1,en},U_{2, en},E_{ en},\Phi_{ ex})\leq  \sigma_1^\ast
\end{equation}
for some constant $\sigma_1^\ast$  depending only on  $(\gamma,b_0,\rho_0,U_{1,0},U_{2,0},S_0,E_0,r_0,r_1,\epsilon_0)$,
  then Problem \ref{probl1} has a  unique smooth    supersonic irrotational solution    $(U_1,U_2, \Phi)\in \left(H^3(\m)\right)^2\times H^4(\m)\subset \left( C^{1,\alpha}(\overline{\Omega})\right)^2\times C^{2,\alpha}(\overline{\Omega})$ with $\alpha\in(0,1)$, which satisfies
 \begin{eqnarray}\label{1-t-3}
\|(U_1,U_2)-(\bar U_1,\bar U_2)\|_{H^3(\m)}+\| \Phi- \bar\Phi\|_{H^4(\m)}\leq  \mc_1^\ast \omega_1(b,U_{ 1,en},U_{2, en},E_{ en},\Phi_{ ex}),
 \end{eqnarray}
 where  $ \mc_1^\ast $ is a positive constant depending only on  $(\gamma,b_0,\rho_0,U_{1,0},U_{2,0},S_0,E_0,r_0,r_1,\epsilon_0)$.
 \end{theorem}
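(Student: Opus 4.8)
The plan is to combine the deformation–curl–Poisson decomposition with a fixed–point iteration whose engine is the well-posedness, in Sobolev spaces, of the linearized hyperbolic–elliptic coupled system. Since the flow is irrotational, the second equation of \eqref{3-1} lets us introduce a velocity potential $\varphi$ with $U_1=\partial_r\varphi$, $U_2=\frac1r\partial_\theta\varphi$ and $\varphi-J_2\theta$ periodic in $\theta$, so that the background $\bar\varphi(r,\theta)=\int_{r_0}^r\bar U_1(s)\,\de s+J_2\theta$ is recovered. Substituting the Bernoulli expression $\rho=\rho(|\nabla\varphi|^2,\Phi)$ coming from \eqref{1-c-z} into the continuity equation turns \eqref{3-1} into the closed system
\begin{equation*}
\partial_r\!\bigl(r\rho(|\nabla\varphi|^2,\Phi)\,\partial_r\varphi\bigr)+\partial_\theta\!\Bigl(\tfrac1r\rho(|\nabla\varphi|^2,\Phi)\,\partial_\theta\varphi\Bigr)=0,\qquad \Bigl(\partial_r^2+\tfrac1r\partial_r+\tfrac1{r^2}\partial_\theta^2\Bigr)\Phi=\rho(|\nabla\varphi|^2,\Phi)-b,
\end{equation*}
where the first equation is a quasilinear second order operator whose principal symbol is $r\rho(1-M_1^2)\partial_r^2-\tfrac{2\rho U_1U_2}{c^2}\partial_r\partial_\theta+\tfrac\rho r(1-M_2^2)\partial_\theta^2$; its discriminant is proportional to $M_1^2+M_2^2-1$, so it is strictly hyperbolic with $r$ time-like exactly in the supersonic region, which holds near the background by \eqref{1-6} and Proposition \ref{pro1}. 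Under this reduction the boundary conditions \eqref{1-c} become Cauchy data for $\varphi$ on $\Gamma_{en}$ (prescribing $\partial_r\varphi=U_{1,en}$ and, through $\partial_\theta\varphi$, the datum $U_{2,en}$) together with the Neumann datum $\partial_r\Phi=E_{en}$ on $\Gamma_{en}$ and the Dirichlet datum $\Phi=\Phi_{ex}$ on $\Gamma_{ex}$ for the Poisson equation.

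Writing $\varphi=\bar\varphi+\dot\varphi$, $\Phi=\bar\Phi+\dot\Phi$ and linearizing at the background produces a linear system of the schematic form $\mathcal{L}_h\dot\varphi=f+\bigl(\text{first order coupling in }\dot\Phi\bigr)$, $\Delta\dot\Phi-\tfrac{\bar\rho}{\bar c^2}\dot\Phi=g+\bigl(\text{first order coupling in }\nabla\dot\varphi\bigr)$, where $\mathcal{L}_h$ is the strictly hyperbolic linearization above and the zeroth order coefficient $-\bar\rho/\bar c^2$ of the elliptic part has a favorable sign because $\rho_\Phi=\rho/c^2>0$. The crux of the proof is the basic energy estimate for this coupled problem with the mixed boundary data: one tests $\mathcal{L}_h\dot\varphi$ against an appropriately chosen multiplier — a combination of $\partial_r\dot\varphi$ and $\dot\varphi$ whose coefficients are tuned to the background so that the boundary integrals on $\Gamma_{en}$ and $\Gamma_{ex}$ carry the correct sign and the interior quadratic form is coercive modulo lower order terms — tests the Poisson equation against $\dot\Phi$, adds the two identities, and absorbs the cross terms by Cauchy–Schwarz, using that $\Delta-\bar\rho/\bar c^2$ controls $\|\dot\Phi\|_{H^1}$. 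Because the hyperbolic energy is propagated in $r$, the resulting Grönwall argument is closed only if the slab width $r_1-r_0$ is taken small; higher order estimates (up to $H^4$ for $\Phi$, $H^3$ for $U_1,U_2$) follow by differentiating in the tangential variable $\theta$ and recovering $\partial_r$–derivatives from the equations.

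For the nonlinear problem I would fix a small ball $\mathcal{B}$ centered at $(\bar\varphi,\bar\Phi)$ in $H^4(\Omega)\times H^4(\Omega)$ — so that the corresponding velocity components lie in $(H^3(\Omega))^2$ — and define an iteration map $\mathcal{T}$: given $(\varphi^\ast,\Phi^\ast)\in\mathcal{B}$, set $\rho^\ast$ by the Bernoulli relation, solve the linear hyperbolic equation for $\varphi$ with the nonlinear remainder frozen at $(\varphi^\ast,\Phi^\ast)$ and the Cauchy data above, then solve the Poisson equation for $\Phi$ with source $\rho(|\nabla\varphi|^2,\Phi^\ast)-b$ and the Neumann/Dirichlet data. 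Using the linear estimates of the previous step, the algebra property of $H^3(\Omega)$ in two dimensions, and the smallness of $\omega_1(b,U_{1,en},U_{2,en},E_{en},\Phi_{ex})$ together with that of $r_1-r_0$, one checks that $\mathcal{T}$ maps $\mathcal{B}$ into itself — in particular that along the iteration $\rho>0$, $U_1>0$ and $U_1^2+U_2^2>c^2$ persist, which is where the uniform bounds $\bar U_1>0$, $\bar\rho>0$ and $|\bar{\bf M}|^2>1$ on $[r_0,r_1]$ from Proposition \ref{pro1} are used. A contraction estimate in the weaker norm $H^3(\Omega)\times H^3(\Omega)$ (the difference of two images solves the linear coupled system with quadratically small data) then yields a unique fixed point; translating back gives the unique $(U_1,U_2,\Phi)\in(H^3(\Omega))^2\times H^4(\Omega)\hookrightarrow(C^{1,\alpha}(\overline\Omega))^2\times C^{2,\alpha}(\overline\Omega)$, and \eqref{1-t-3} is precisely the linear estimate applied one last time. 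Uniqueness within the class of supersonic solutions to Problem \ref{probl1} follows by the same basic energy estimate applied to the difference of any two such solutions.

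I expect the principal obstacle to be the construction of the multiplier in the second step, i.e.\ producing a single weighted test field that simultaneously closes the hyperbolic estimate for $\dot\varphi$ and the elliptic estimate for $\dot\Phi$ while keeping the entrance and exit boundary terms signed, given that the $\dot\Phi\!\to\!\dot\varphi$ coupling is genuinely first order and the hyperbolic part has no coercivity to spare. A secondary but essential technical point is the derivative bookkeeping that makes the two solvers close on consistent spaces — the hyperbolic solver needs $\nabla\Phi\in H^3$, hence $\Phi\in H^4$, whereas the Poisson solver gains two derivatives so that $\rho-b\in H^2$ suffices and $\rho\in H^3$ is delivered by $\varphi,\Phi$ — and checking that the smallness of $r_1-r_0$ granted by Proposition \ref{pro1} is also enough to absorb the Grönwall growth in the energy inequalities.
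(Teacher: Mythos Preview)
Your overall strategy matches the paper's: pass to a potential $\psi$ for the velocity perturbation, linearize to a coupled hyperbolic--elliptic system for $(\psi,\Psi)$, derive an $H^1$ estimate by a multiplier, bootstrap to $H^4$, and close a fixed-point iteration. The multiplier the paper actually uses is $Q(r)\,\partial_r\hat\psi$ for the hyperbolic part (no zeroth-order $\hat\psi$ term is needed) paired with $-\hat\Psi$ for the Poisson part; the weight $Q>0$ is produced by solving a Riccati-type ODE on $[r_0,r_1]$, and it is the positive solvability of this ODE---rather than a Gr\"onwall argument---that forces $r_1-r_0$ small at the $H^1$ level (Gr\"onwall appears only in the $H^2$--$H^4$ bootstrap). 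Existence for the linear coupled problem is then supplied by a Fourier/Galerkin approximation in $\theta$ together with the Fredholm alternative, a step you do not mention.

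There is, however, a gap in your iteration as written. You propose to solve the hyperbolic equation for $\varphi$ with $\Phi^\ast$ frozen and then solve Poisson for $\Phi$, i.e.\ to alternate. This will not contract: the hyperbolic equation contains the first-order linear coupling $\bar b_1\partial_r\Phi+\bar b_2\partial_\theta\Phi+\bar b_3\Phi$ (and Poisson contains $\bar a_3\partial_r\psi+\bar a_4\partial_\theta\psi$) with $O(1)$ coefficients, so moving $\Phi=\Phi^\ast$ to the right puts an $O(1)\cdot\|\nabla(\Phi_1^\ast-\Phi_2^\ast)\|$ term into the difference estimate and no smallness is available to absorb it. The paper instead freezes only the quasilinear principal coefficients $A_{ij}$ and the genuinely nonlinear remainders, keeps the full linear coupling on the left, and at each step solves the coupled linear system for $(\psi,\Psi)$ simultaneously; the difference of two iterates then carries only quadratically small sources, and contraction holds already in the $H^1$ norm (not $H^3$).
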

\par Next, we turn to cylindrical    flows with  nonzero vorticity.
\begin{problem}\label{probl2}
Given functions $(b, U_{1, en}, U_{2, en},  K_{ en},S_{ en},E_{ en}, \Phi_{ ex})$ sufficiently close to $(b_0,U_{1,0},U_{2,0},K_0,$\\ $S_0,E_0,\bar\Phi(r_1))$,
find  a solution
$(U_1,U_2,K,S,\Phi) $ to the nonlinear system \eqref{1-2-c} in $\m $ satisfying  the following properties.
\begin{enumerate}[ \rm (1)]
 \item
 $\rho>0$ and $   U_1>0 $ in $ \overline{\m}$.
 \item $( U_1,U_2,K,S, \Phi) $ satisfies the boundary conditions:
\begin{equation}\label{1-c-n}
\begin{cases}
(U_1, U_2,   K,S,\p_r\Phi)(r_0,\th)=(U_{1,en}, U_{2,en},  K_{en},S_{en},E_{en})(\th),\ \ &{\rm{on}}\ \ \Gamma_{en},\\
\Phi(r_1,\th)=\Phi_{ex}(\th), \ \ &{\rm{on}}\ \ \Gamma_{ex}.\\
\end{cases}
\end{equation}
\item
$U_1^2+U_2^2>c^2(K, U_1,  U_2,\Phi)$, i.e, the flow corresponding to $(U_1,U_2, K,S,\Phi)$ is supersonic in $\overline{\m}$.
Here
the  sound speed $ c(K, U_1,  U_2,\Phi) $ is defined by
\begin{equation}\label{1-c-z-n}
c(K, U_1,  U_2,\Phi)=\sqrt{P_\rho(\rho, S)}=\sqrt{(\gamma-1)\bigg(K+\Phi-\frac{1}{2}\left( U_1^2+ U_2^2\right)\bigg)},
\end{equation}

\end{enumerate}
\end{problem}
\par The second main result in this paper yields  the existence and uniqueness of smooth supersonic flows with nonzero vorticity to the  system \eqref{1-2-c}.
\begin{theorem}\label{th2}
 For given functions $b\in C^2(\overline{\m})$, $U_{ 1,en}\in C^3(\mathbb{T}_{2\pi})$ with  $\displaystyle{\min_{\mathbb{T}_{2\pi} }U_{1, en}>0 }$ and $(U_{2, en},K_{ en},S_{ en},$\\$E_{ en},\Phi_{ ex})\in \left(C^4(\mathbb{T}_{2\pi})\right)^5$,
define
\begin{equation}\label{1-9-bb}
\begin{aligned}
\omega_2(K_{ en},S_{ en}):=\|(K_{ en},S_{ en})-(K_0,S_0)\|_{C^4(\mathbb{T}_{2\pi})},
  \\
   \end{aligned}
  \end{equation}
  and set
\begin{equation*}
\sigma_p:=\omega_1(b,U_{ 1,en},U_{2, en},E_{ en},\Phi_{ ex})+\omega_2(K_{ en},S_{ en}),
  \end{equation*}
  where $ \omega_1(b,U_{ 1,en},U_{2, en},E_{ en},\Phi_{ ex}) $ is defined in \eqref{1-9-a}.
  For each $\epsilon_0\in(0,R_0)$ with $R_0>0$ given in Proposition \ref{pro1}, there exists a constant $\bar{r}_1\in(r_0,r_0+R_0-\epsilon_0]$  so  that for $r_1\in(r_0, \bar{r}_1)$,
 if
$(b, U_{1, en}, U_{2, en},  K_{ en},S_{ en},E_{ en}, \Phi_{ ex})$ satisfies
\begin{equation}\label{1-t-1-n}
\sigma_p\leq  \sigma_2^\ast
\end{equation}
   for some constant  $\sigma_2^\ast>0$ depending only on  $(\gamma,b_0,\rho_0,U_{1,0},U_{2,0},S_0,E_0,r_0,r_1,\epsilon_0)$,
 then Problem \ref{probl2} has a  unique smooth supersonic  solution $(U_1,U_2, K,S,\Phi)\in \left(H^3(\m)\right)^2\times \left(H^4(\m)\right)^3$ with nonzero vorticity, which satisfies
 \begin{eqnarray}\label{1-t-3-n}
&&\|(U_1,U_2)-(\bar U_1,\bar U_2)\|_{H^3(\m)}+\| \Phi- \bar\Phi\|_{H^4(\m)}\leq  \mc_2^\ast\sigma_p,
  \\\label{1-t-4-n}
  &&\|(K,S)-(K_0,S_0)\|_{H^4(\m)}\leq \mc_2^\ast \omega_2(K_{ en},S_{ en}),
  \end{eqnarray}
  where
 $C_2^\ast>0$ is a constant  depending only on  $(\gamma,b_0, \rho_0,U_{1,0},U_{2,0}, S_{0}, E_{0},r_0,r_1,\epsilon_0)$.
Furthermore,  for each $\alpha\in(0,1)$, it follows from \eqref{1-t-3-n} and \eqref{1-t-4-n} that
\begin{eqnarray}\label{1-t-7}
  &&\|(U_1,U_2)-(\bar U_1,\bar U_2)\|_{C^{1,\alpha}(\overline{\m})} + \|\Phi-\bar\Phi\|_{C^{2,\alpha}(\overline{\m})}
  \leq  \mc_3^\ast \sigma_p,\\\label{1-t-8}
  &&\|K- K_0\|_{C^{2,\alpha}(\overline{\m})}+
  \|S-S_0\|_{C^{2,\alpha}(\overline{\m})}\leq \mc_3^\ast\omega_2(K_{ en},S_{ en})
   \end{eqnarray}
for some constant   $\mc_3^\ast>0$ depending only on $(\gamma,b_0,\rho_0,U_{1,0},U_{2,0},S_0,E_0, r_0, r_1,\epsilon_0,\alpha)$.
\end{theorem}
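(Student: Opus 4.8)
The plan is to recast \eqref{1-2-c} so that the hyperbolic and elliptic parts are separated, and then run a nested iteration. Using the Bernoulli relation \eqref{1-2-k} and \eqref{1-c-z-n}, the density becomes $\rho=\varrho(U_1,U_2,K,S,\Phi):=\big(\tfrac{\gamma-1}{\gamma e^{S}}\big(K+\Phi-\tfrac12(U_1^2+U_2^2)\big)\big)^{1/(\gamma-1)}$, so the unknowns reduce to $(U_1,U_2,K,S,\Phi)$. The entropy equation, and the combination of the mass and energy equations, give the transport relations $\big(U_1\p_r+\tfrac{U_2}{r}\p_\th\big)S=\big(U_1\p_r+\tfrac{U_2}{r}\p_\th\big)K=0$ in $\m$, while the momentum equations yield a Crocco-type identity expressing the scalar vorticity $W:=\tfrac1r\big(\p_r(rU_2)-\p_\th U_1\big)$ as a function $W=\mw(\n K,\n S,U_1,U_2,\Phi)$ that is linear in $(\n K,\n S)$ and vanishes when $(K,S)\equiv(K_0,S_0)$ (in particular the background flow is irrotational). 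Substituting $\rho=\varrho$ into the mass equation and combining it with the curl relation gives a closed first-order system for $(U_1,U_2)$ which, by the supersonic sign condition \eqref{1-6}, is hyperbolic with $r$ time-like; reducing it in the standard way produces a second-order hyperbolic equation for a potential-type function $\va$ (out of which $U_1,U_2$ are reconstructed together with the prescribed vorticity), coupled through $\varrho$ to the elliptic Poisson equation $(\p_r^2+\tfrac1r\p_r+\tfrac1{r^2}\p_\th^2)\Phi=\varrho(U_1,U_2,K,S,\Phi)-b$. This is the hyperbolic--elliptic coupled structure, supplemented by the two transport equations.

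\textbf{Inner layer: the linear hyperbolic--elliptic coupled system.} For a frozen vorticity $W$ --- equivalently, frozen $(K,S)$ --- the inner step solves the hyperbolic--elliptic system for $(\va,\Phi)$. Linearizing about the background, the perturbation of $\va$ satisfies a linear second-order equation which, since $\bar U_1>0$ and $|\bar{\bf u}|^2>\bar c^2$ (see \eqref{1-6}, \eqref{1-7-1}), is hyperbolic in the $r$-direction with full Cauchy data on the supersonic inflow $\Gamma_{en}$, while $\Phi-\bar\Phi$ satisfies a linear elliptic equation with the mixed boundary data $\p_r\Phi|_{\Gamma_{en}}$, $\Phi|_{\Gamma_{ex}}$ from \eqref{1-c-n}; the two equations are coupled only through zeroth- and first-order terms carrying the factors $\varrho_\Phi$ and $\varrho_{|{\bf u}|^2}$. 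The heart of the proof, as the abstract indicates, is the well-posedness and the $H^s$ a priori estimates for this coupled system. I would obtain them by multiplying the $\va$-equation by a multiplier of the form $\p_r\va$ corrected by lower-order terms and a weight depending only on $r$, and integrating over $\{r_0<r<\tau\}\times\mathbb{T}_{2\pi}$: the supersonic sign condition makes the boundary quadratic form at $r=\tau$ positive-definite in $(\p_r\va,\p_\th\va)$, and a Gr\"onwall argument in $\tau$ yields the basic energy estimate. The Poisson equation gains two derivatives, $\|\Phi-\bar\Phi\|_{H^{s+1}}\lesssim\|{\bf u}-\bar{\bf u}\|_{H^{s-1}}+(\text{boundary data})$, so the elliptic coupling terms are absorbed once the slab width $r_1-r_0$ is small --- this is exactly the role of $r_1<\bar r_1$ with $\bar r_1$ close to $r_0$ (and of Proposition \ref{pro1}, which supplies the background only on a short interval). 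Higher-order estimates follow by differentiating in $\th$ and trading $\p_r$-derivatives via the equations; since $\th\in\mathbb{T}_{2\pi}$ there are no lateral corners and hence no compatibility conditions beyond those automatically met by the data. A contraction argument --- bounded in $H^3\times H^4$, contractive in a weaker norm --- then solves the nonlinear inner problem.

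\textbf{Outer layer: the transport equations and closing the iteration.} Given $(U_1,U_2)$ from the inner step --- which by $H^3(\m)\hookrightarrow C^{1,\alpha}(\overline{\m})$ is $C^{1,\alpha}$ and close to $(\bar U_1,\bar U_2)$, so $U_1>0$ in $\overline{\m}$ --- every integral curve of $U_1\p_r+\tfrac{U_2}{r}\p_\th$ reaches $\Gamma_{en}$, hence $K$ and $S$ are uniquely recovered by integrating the transport equations from $(K_{en},S_{en})$, which gives $\|(K,S)-(K_0,S_0)\|_{H^4(\m)}\lesssim\omega_2(K_{en},S_{en})$, i.e.\ \eqref{1-t-4-n}; the $H^4$ regularity comes from differentiating the transport equations up to third order and using the $C^{1,\alpha}$ bound on $(U_1,U_2)$ and the $C^{2,\alpha}$ bound on $\Phi$ to control the coefficients. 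The resulting $W=\mw(\n K,\n S,U_1,U_2,\Phi)$, which is linear in $(\n K,\n S)$ and small of order $\omega_2(K_{en},S_{en})$, is fed back into the inner layer; for $\sigma_p$ and $r_1-r_0$ small the full map is a contraction in a weaker norm, which closes the two-layer iteration and produces the unique solution of Problem \ref{probl2} with \eqref{1-t-3-n}--\eqref{1-t-4-n}. Uniqueness is obtained by applying the same energy and transport estimates to the difference of two solutions, \eqref{1-t-7}--\eqref{1-t-8} follow from \eqref{1-t-3-n}--\eqref{1-t-4-n} by Sobolev embedding, and the solution genuinely carries nonzero vorticity because $W=\mw(\n K,\n S,\ldots)$ does not vanish once $(K_{en},S_{en})$ is non-constant.

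\textbf{Main obstacle.} The principal difficulty is the energy estimate for the linear hyperbolic--elliptic coupled system: one must find a single multiplier for the hyperbolic operator that simultaneously makes the $r=\tau$ boundary form coercive in the first derivatives of $\va$ and keeps the elliptic coupling terms --- a priori of the same differential order as the quantities being estimated --- absorbable; this forces the slab $(r_0,r_1)$ to be thin and relies on the precise sign structure of the background in \eqref{1-6}. A secondary point is the derivative accounting across the two layers: the vorticity costs one derivative of $(K,S)$, the transport equations require the velocity one derivative smoother than the vorticity, and the Poisson equation must return $\Phi$ two derivatives smoother than $\rho$; the choice of $H^3$ for $(U_1,U_2)$ and $H^4$ for $(\Phi,K,S)$ is dictated by the need to close this loop.
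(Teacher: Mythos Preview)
Your overall architecture --- deformation--curl--Poisson decomposition, two-layer iteration, multiplier estimate for the linearized hyperbolic--elliptic coupled system --- matches the paper's. But your outer layer has a genuine gap. You assert that ``the $H^4$ regularity [of $(K,S)$] comes from differentiating the transport equations up to third order and using the $C^{1,\alpha}$ bound on $(U_1,U_2)$''; this does not close. With $(U_1,U_2)\in H^3(\m)$ only, the backward characteristic map is at best $H^3$, so $K=K_{en}\circ(\text{backward flow})\in H^3$, not $H^4$. The vorticity source $G_2$ in \eqref{5-6}, which involves $\partial_r(K,S)$, then lies only in $H^2$; the auxiliary elliptic problem \eqref{5-12} returns $\phi_1\in H^4$ rather than $H^5$; and the inner layer (Corollary~\ref{cor2}) delivers $(W_1,W_2)\in H^2$ instead of $H^3$. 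The iteration loses a derivative at every pass. You flag the derivative accounting as a ``secondary point,'' but it is in fact the central new difficulty beyond the irrotational case of Theorem~\ref{th1}.

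The paper's resolution is to represent $(K,S)$ not through the flow of $(U_1,U_2)$ but through the \emph{stream function} $\mathscr{L}$ of the mass flux $\rho\,{\bf u}$, available from the continuity equation \eqref{6-15}. Since $\rho\,{\bf u}\in H^3$, its primitive $\mathscr{L}$ lies in $H^4$; a Schauder bootstrap \eqref{6-12-f-f} on the Poisson equation gives $\Phi\in C^{3,1/2}(\overline\m)$, so that $\mathscr{L}(r_0,\cdot)\in C^4$ and hence $\mathscr{L}_{r_0}^{-1}\in C^4$. The formula \eqref{6-16-1} then writes $K=K_{en}\big(\mathscr{L}_{r_0}^{-1}(\mathscr{L})\big)$ as a $C^4$ function composed with an $H^4$ function, giving $K\in H^4$ --- exactly the extra derivative needed to close the loop (see \eqref{6-17}). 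This stream-function mechanism, highlighted in the remark following Theorem~\ref{th2}, is the ingredient missing from your plan.
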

\begin{remark}
  {\it We restrict  $r_1\in(r_0, \bar{r}_1)$  in Theorems \ref{th1} and \ref{th2}  to
 establish  a priori $H^1 $
energy estimate
  of the linearized
system consisting of a second order hyperbolic equation and a second order elliptic equation weakly
coupled together.  It should be emphasized that  a priori $ H^1 $ estimate  is the key ingredient to get the well-posedness  of  the  linear second order hyperbolic-elliptic coupled system.}
\end{remark}
\begin{remark}
 {\it For smooth  cylindrical supersonic  rotational flows, an effective decomposition of elliptic and hyperbolic modes is crucial for the solvability of the nonlinear
boundary problem.  The authors in \cite{BDXJ21}  utilized the Helmholtz decomposition of the velocity field  to establish the structural stability of two-dimensional supersonic Euler-Poisson flows with nonzero vorticity in flat nozzles. This decomposition   requires   careful decomposition of boundary conditions so that the vector potential and scalar potential
can be solved uniquely. However, due to the nonzero
angular velocity,  the fluid in  concentric cylinders exhibits
 quite different wave phenomena compared with the fluid moving in straight nozzles.
Here we will utilize  the deformation-curl-Poisson  decomposition introduced  in \cite{WS19}
to reformulate the steady Euler-Poisson system  as a deformation-curl-Poisson system  together with two transport equations for the Bernoulli's quantity and the entropy. In the deformation-curl-Poisson system, the vorticity is resolved by an algebraic
equation for  the Bernoulli's function and the entropy and there is a loss of one derivative in the equation for the vorticity due to the type  of the deformation-curl-Poisson  system in the supersonic region.  To  overcome it, we will design
an elaborate two-layer iteration scheme by choosing some appropriate function spaces. By utilizing the
one order higher regularity of the stream function and the fact that the Bernoulli¡¯s quantity and the
entropy can be represented as functions of the stream function, we  gain one more  order derivatives
estimates for  the Bernoulli's function and the entropy than  the velocity filed. This is crucial for
us to close the energy estimates.}
\end{remark}
\subsection{Smooth axisymmetric  supersonic spiral
flows}\noindent
\par In the following, we investigate the structural stability of the background solution under axisymmetric
perturbations of suitable boundary conditions.  Using the   cylindrical coordinates \eqref{coor}, any function $f({\bf x})$ can be represented as $f({\bf x})=f(r,\theta,z)$, and a vector-valued function ${\bf F}({\bf x})$ can be represented as ${\bf F}({\bf x})=F_1(r,\theta,z){\bf e}_r+ F_2(r,\theta,z){\bf e}_\th+ F_{3}(r,\theta,z){\bf e}_{3}$. We say that a function $f({\bf x})$ is  axisymmetric if its value is independent of $\theta$ and that a vector-valued function ${\bf F}= (F_1, F_2, F_{3})$ is axisymmetric if each of functions $F_1({\bf x}), F_{2}({\bf x})$ and $F_{3}({\bf x})$ is axisymmetric.
\par Assume that the velocity, the density, the pressure and the eletrostatic
potential are of the form
\begin{equation*}
\begin{aligned}
&{\bf u}({\textbf x})= U_{1}(r,z)\mathbf{e}_r+U_{2}(r,z)\mathbf{e}_\theta
+U_3(r,z)\mathbf{e}_z,\\
&\rho({\textbf x})=\rho(r,z),\ \ P({\textbf x})=P(r,z),\ \ \Phi({\textbf x})=\Phi(r,z).\\
\end{aligned}
\end{equation*}
Then the system \eqref{1-2} reduces to
\begin{eqnarray}\label{2-10}
\begin{cases}
\begin{aligned}
&\partial_r(r\rho U_1)+\p_{z}(r\rho U_3)=0,\\
&\rho\left(U_1\partial_r +U_3\p_{z}\right)U_1+\partial_r P-\frac{\rho U_2^2}{r}=\rho\p_r\Phi,\\
&\rho\left(U_1\partial_r +U_3\p_{z}\right)U_2+\frac{\rho U_1U_2}{r}=0,\\
&\rho\left(U_1\partial_r +U_3\p_{z}\right)U_3+\p_{z}P=\rho\p_{z}\Phi,\\
&\rho\left(U_1\partial_r+U_3\p_{z}\right)S=0,\\
&\bigg(\p_r^2+\frac 1 r\p_r+\p_{z}^2\bigg)\Phi=\rho-b.
\end{aligned}
\end{cases}
\end{eqnarray}
The  domain $\mathbb{D}$ is  simplified as
 \begin{equation*}
\mn=\{(r,z): r_0<r<r_1, -1<z<1\}.
\end{equation*}
The entrance, exit and boundaries of the cylinder are denoted by
\begin{equation*}
\begin{aligned}
&\Sigma_{en}=\{(r,z):r=r_0,\  -1<z<1\},\\
		&\Sigma_{ex}=\{(r,z):r=r_1,\  -1<z<1\},\\
&\Sigma_{w}^\pm=
		\{(r,z):r_0<r<r_1,\ z=\pm 1\}.
	\end{aligned}
\end{equation*}
\par  For  axisymmetric   flows, we investigate  the following problem.
\begin{problem}\label{probl3}
Given functions $(b,  U_{2, en}, U_{3, en},K_{en},S_{en} ,\Phi_{ en}, U_{1, ex},\Phi_{ ex})$  sufficiently close to $(b_0,U_{2,0},$\\$0,K_0,S_0,0,\bar U_1(r_1),\bar\Phi(r_1))$,
find  a solution
$( U_1,U_2, U_3, K,S,\Phi) $  to the   system \eqref{2-10} in $\mn$ satisfying  the following properties.
\begin{enumerate}[ \rm (1)]
 \item $\rho>0$ and $   U_1>0 $ in $ \overline{\mn}$.
\item $( U_1,U_2, U_3, K,S,\Phi) $ satisfies the boundary conditions:
\begin{equation}\label{1-c-c-r-A}
\begin{cases}
(U_2,   U_3,K,S,\Phi)(r_0,z)=(U_{2,en}, U_{3,en},  K_{en},S_{en},\Phi_{en})(z),\ \ &{\rm{on}}\ \ \Sigma_{en},\\
(U_1,\Phi)(r_1,z)=(U_{1, ex},\Phi_{ ex})(z), \ \ &{\rm{on}}\ \ \Sigma_{ex},\\
U_3(r,\pm 1)=\p_{z}\Phi(r,\pm 1)=0,\ \ &{\rm{on}}\ \ \Sigma_{w}^\pm.\\
\end{cases}
\end{equation}
\item
$U_1^2+U_2^2+U_3^2>c^2(K, U_1,  U_2,U_3,\Phi)$, i.e, the flow corresponding to $(U_1,U_2,U_3, K,S,\Phi)$ is supersonic in $\overline{\mn}$. Here
the  sound speed $ c(K, U_1,  U_2,U_3,\Phi) $ is given by
\begin{equation}\label{1-c-z-n-A}
c(K, U_1,  U_2,U_3,\Phi)=\sqrt{P_\rho(\rho, S)}=\sqrt{(\gamma-1)\bigg(K+\Phi-\frac{1}{2}\left( U_1^2+ U_2^2+U_3^2\right)\bigg)}.
\end{equation}
\end{enumerate}
\end{problem}
\par The third main result in this paper states  the structural stability of  the background supersonic solution under    axisymmetric
perturbations of suitable boundary conditions, which also yields the the existence and uniqueness
of smooth supersonic flows with nonzero vorticity to the system \eqref{2-10}.
\begin{theorem}\label{th3}
Fix $ \alpha\in(0,1) $. For given functions $b\in C^{0,\alpha}(\overline{\mn})$ and $(  U_{2, en}, U_{3, en},K_{en},S_{en} ,\Phi_{ en}, U_{1, ex},$\\$\Phi_{ ex})\in \left(C^{2,\alpha}([-1,1])\right)^7$ , define
\begin{equation}\label{1-9-a-r-a}
\begin{aligned}
 &\omega_3(b)
 :=\|b-b_0\|_{C^{0,\alpha}(\overline{\mn})},\\
  &\omega_4(U_{2, en}, ,K_{en},S_{en} )
  :=\|  (U_{2, en},
  K_{en}, S_{en} )-(U_{2,0},K_0,S_0)\|_{C^{2,\alpha}([-1,1])},
  \\
  &\omega_5( U_{3, en},\Phi_{ en}, U_{1, ex},\Phi_{ ex})
  :=\|  ( U_{3, en},
  \Phi_{ en}, U_{1, ex},\Phi_{ ex})-(0,0,\bar U_1(r_1),\bar\Phi(r_1))\|_{C^{2,\alpha}([-1,1])},
  \\
   \end{aligned}
  \end{equation}
 and set
\begin{equation*}
\sigma_v:=\omega_3(b)+\omega_4(U_{2, en}, K_{en},S_{en})+\omega_5( U_{3, en},\Phi_{ en}, U_{1, ex},\Phi_{ ex}).
  \end{equation*}
   If
$(b,  U_{2, en}, U_{3, en},K_{en},S_{en} ,\Phi_{ en}, U_{1, ex},\Phi_{ ex})$ satisfies
\begin{equation}\label{1-t-1-r-a}
\sigma_v\leq \sigma_1^\star
\end{equation}
 for  some constant  $\sigma_1^\star>0$ depending only on  $(\gamma,b_0,\rho_0,U_{1,0},U_{2,0},S_0,E_0,r_0,r_1)$, and
 the compatibility conditions
 \begin{equation}\label{1-t-1-r-a-a}
 \begin{aligned}
 &U_{2, en}'(\pm 1)= U_{3, en}(\pm 1)=U_{3, en}''(\pm 1)=K_{en}'(\pm 1)=S_{en}'(\pm 1)\\
 &=\Phi_{ en}'(\pm 1)= U_{1, ex}'(\pm 1)=\Phi_{ ex}'(\pm 1)=0,
 \end{aligned}
 \end{equation}
 then Problem \ref{probl3} has a  unique smooth axisymmetric supersonic  solution $(U_1,U_2,U_3,   K,S,\Phi)\in \left(C^{2,\alpha}(\overline\mn)\right)^6$ with nonzero vorticity, which satisfies
  \begin{eqnarray}\label{1-t-3-r-a}
\|(U_1,U_2,U_3,   K,S,\Phi)-(\bar U_1,\bar U_2,0,K_0,S_0,\bar \Phi)\|_{C^{2,\alpha}(\overline\mn)}\leq  \mc_1^\star\sigma_v,
\end{eqnarray}
where
 $\mc_1^\star>0$ is a  constant depending only on  $(\gamma,b_0, \rho_0,U_{1,0},U_{2,0}, S_{0}, E_{0},r_0,r_1)$. Furthermore,  the solution $(U_1,U_2,U_3, K,S,\Phi  )$ satisfies the compatibility conditions
 \begin{equation}
\label{comp-cond-nlbvp-full-1}
\begin{aligned}
\p_{z} U_1=\p_{z} U_2=U_3=\p_{z}^2U_3=\p_{z} K
=\p_{z} S=\p_{z}\Phi=0, \ \ {\rm{on}} \ \Sigma_{w}^\pm.
\end{aligned}
\end{equation}
\end{theorem}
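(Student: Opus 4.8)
The plan is to recast the axisymmetric system \eqref{2-10} by the deformation-curl-Poisson decomposition of \cite{WS19}. Along the meridian trajectories $\dot r=U_1$, $\dot z=U_3$, the third and fifth equations of \eqref{2-10} give $(U_1\partial_r+U_3\partial_z)(rU_2)=0$ and $(U_1\partial_r+U_3\partial_z)S=0$, while combining the momentum equations with the entropy equation and \eqref{1-2-k} yields $(U_1\partial_r+U_3\partial_z)K=0$; thus the angular momentum $rU_2$, the entropy $S$ and the Bernoulli function $K$ are each transported along streamlines and are fixed by their entrance values in \eqref{1-c-c-r-A}. Using \eqref{1-2-k} to express $\rho$ as a function of $(K,S,\Phi,|{\bf u}|^2)$, the continuity equation together with the meridian vorticity $\partial_zU_1-\partial_rU_3$ — which is given algebraically in terms of $K$, $S$ and $\partial_r(rU_2)$ by the Crocco-type relation coming from the radial and vertical momentum equations — closes into a single second-order equation for the meridian velocity, equivalently for a stream function $\psi$ with $r\rho U_1=\partial_z\psi$, $r\rho U_3=-\partial_r\psi$. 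Its principal part is $(c^2-U_1^2)\partial_r^2-2U_1U_3\partial_{rz}^2+(c^2-U_3^2)\partial_z^2$, with ellipticity measured by $c^2(c^2-U_1^2-U_3^2)$. Since $U_3$ is a perturbation of $0$ and, by Proposition \ref{pro1}, $\bar U_1^2<\bar c^2$ with a uniform gap on $[r_0,r_1]$, this quantity stays positive, so — in sharp contrast with the hyperbolic-elliptic structure behind Theorems \ref{th1}--\ref{th2} — the governing equation is \emph{elliptic}. Coupled with $\Delta\Phi=\rho-b$ it forms a weakly coupled second-order elliptic system for $(\psi,\Phi)$, supplemented by the three transport equations.

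The heart of the proof is the a priori $C^{2,\alpha}$ estimate for the linearization of this elliptic system about $(\bar U_1,\bar U_2,0,K_0,S_0,\bar\Phi)$. The boundary conditions inherited from \eqref{1-c-c-r-A} are mixed: at $\Sigma_{en}$ the data fix $U_2,U_3,K,S,\Phi$ (with $U_1$ left free), at $\Sigma_{ex}$ they fix $U_1,\Phi$, and on $\Sigma_w^\pm$ one has $U_3=\partial_z\Phi=0$. The wall conditions, together with the compatibility conditions \eqref{1-t-1-r-a-a}, are exactly what allows one to reflect the problem evenly in $z$ across $z=\pm1$ ($\Phi,U_1,U_2,K,S$ even, $U_3$ odd): the vanishing of $U_{3,en}$, $U_{3,en}''$ and of the first $z$-derivatives of $U_{2,en},K_{en},S_{en},\Phi_{en},U_{1,ex},\Phi_{ex}$ at $z=\pm1$ guarantees that the reflected coefficients and boundary data are $C^{2,\alpha}$, so that after reflection one is left with a genuine elliptic boundary value problem carrying only Dirichlet-type data on $\{r=r_0\}\cup\{r=r_1\}$, free of corner singularities. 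The principal part being diagonal (the elliptic operator for $\psi$ and the Laplacian for $\Phi$) with only lower-order coupling, Schauder estimates for the system hold modulo a $C^0$ term; that term is removed by a uniqueness argument for the linearized boundary value problem, which is where the special structure of the Euler--Poisson system enters — in particular the Poisson equation linearizes to a uniformly coercive equation since $\partial_\Phi\rho=\rho/c^2>0$. This yields the linearized analogue of \eqref{1-t-3-r-a}.

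The remaining pieces are routine by comparison. Given a meridian velocity field in $C^{1,\alpha}$ with $U_1>0$ close to $\bar U_1>0$, the trajectories run transversally from $\Sigma_{en}$ to $\Sigma_{ex}$ with $z=\pm1$ invariant, so the transport equations for $rU_2,S,K$ with data on $\Sigma_{en}$ are uniquely globally solvable; being constant along trajectories, these quantities inherit the one-order-higher regularity of the trajectory map and of the entrance data, which is why $(K,S)$ is controlled by $\omega_4$ alone with no loss of derivatives. One then defines the solution map — solve the transport equations for the new $(rU_2,S,K)$, then the coupled elliptic system for the new $(\psi,\Phi)$, recover the velocity, iterate — and shows, via the linear a priori estimate and contraction estimates for the transport part, that it is a contraction on a ball of radius $O(\sigma_v)$ in $\left(C^{2,\alpha}(\overline{\mn})\right)^6$, provided $\sigma_v\le\sigma_1^\star$ is small enough to preserve $\rho>0$, $U_1>0$, the meridian subsonicity $U_1^2+U_3^2<c^2$ and the overall supersonicity $U_1^2+U_2^2+U_3^2>c^2$ (the last persisting since $\bar U_2^2>\bar c^2$ with a gap by Proposition \ref{pro1}). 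The fixed point is the unique solution and satisfies \eqref{1-t-3-r-a}; since the construction is even in $z$ (odd for $U_3$), the solution automatically obeys \eqref{comp-cond-nlbvp-full-1}.

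The main obstacle is this a priori estimate for the linearized coupled elliptic system under mixed boundary conditions — distinct fluid quantities prescribed at the entrance and at the exit, plus the wall conditions — on a domain with corners. Without exploiting the special structure of the Euler--Poisson system (conservation of $rU_2$ along streamlines, which reduces the swirl to a known source; coercivity of the linearized Poisson equation; subsonicity of the meridian flow) and without the compatibility conditions \eqref{1-t-1-r-a-a} that render the even/odd reflection $C^{2,\alpha}$, one would face corner singularities and the system would fail to be $C^{2,\alpha}$-solvable. Verifying that the reflected problem genuinely closes as an elliptic boundary value problem to which Schauder theory and the uniqueness argument apply uniformly in the small parameters is the technical crux; the trajectory analysis, the preservation of the open conditions and the contraction are comparatively standard.
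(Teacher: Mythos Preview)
Your overall strategy matches the paper's: deformation--curl--Poisson decomposition, transport equations for $(rU_2,K,S)$ along meridian streamlines, a coupled elliptic system for the meridian velocity and $\Phi$, even/odd reflection across $\Sigma_w^\pm$ enabled by \eqref{1-t-1-r-a-a}, Schauder bootstrapping, and a contraction in $C^{2,\alpha}$. The conclusions you draw about meridian ellipticity from $\bar U_1^2<\bar c^2$ and the persistence of supersonicity from $\bar U_2^2>\bar c^2$ are exactly what the paper uses.

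Two points of divergence are worth flagging. First, you work with a mass-flux stream function $r\rho U_1=\partial_z\psi$, $r\rho U_3=-\partial_r\psi$, whereas the paper linearizes the div--curl system for the velocity perturbations $(T_1,T_3)=(U_1-\bar U_1,U_3)$ directly, peels off the curl part by an auxiliary Poisson problem, and then introduces a \emph{velocity potential} $\psi_2$ with $\tilde T_1=\partial_r\psi_2$, $\tilde T_3=\partial_z\psi_2$. Your formulation is equivalent but buries the linear structure inside the nonlinear dependence of $\rho$ on the unknowns; the paper's choice keeps the linearized problem genuinely linear and makes the boundary conditions (Neumann at $r_1$ for $\psi_2$, Dirichlet for $\Phi$) transparent.

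Second, and more important, the step you describe as ``Schauder estimates modulo a $C^0$ term removed by a uniqueness argument'' is exactly the crux, and the paper handles it not by an abstract uniqueness argument but by an explicit $H^1$ coercivity via Lax--Milgram: in the weak formulation for $(\psi_2,\Psi)$ the two cross terms $\int rd_3\Psi\,\partial_r\psi_2$ and $-\int rd_3\partial_r\psi_2\,\Psi$ cancel \emph{identically}, leaving a diagonal, manifestly coercive bilinear form (this is the ``special structure'' alluded to in Remark~1.12). From that $H^1$ estimate the paper bootstraps to $C^{1,\alpha}$ and then $C^{2,\alpha}$. Your sketch is correct in spirit, but you should be aware that identifying this cancellation is the actual content of the estimate; without it the coupled system has off-diagonal first-order terms that would obstruct a clean coercivity or maximum-principle argument.
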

\begin{remark}
 {\it The analysis of axisymmetric supersonic spiral  flows is simpler than those of the cylindrical supersonic spiral flows.  By the deformation-curl-Poisson decomposition developed in \cite{WS19}, it will be shown that the  deformation-curl-Poisson  system for $ (U_1,U_3,\Phi) $ is an elliptic
system when linearized around the background supersonic flow. The quantities $(r U_2, B,S)$ are conserved along the particle trajectory.
We discover  a
special structure of the associated  elliptic system,  which  yields  an $H^1$
estimate. It should be emphasized that the $H^1$ estimate of the weak solution
for the linearized elliptic system plays a key role in our analysis. Based on the $H^1(\mn)$ estimate,  we  establish the $C^{1,\alpha}(\overline\mn) $ estimate of the weak solution  and the   $ C^{2,\alpha}(\overline\mn) $ estimate can be derived  by  the symmetric extension technique and  Schauder type estimate.}
\end{remark}
\begin{remark}
{\it If one consider the case with $ \gamma>1 $, for any given $\rho_0>0$, $ U_{1,0}>0$, $U_{2,0}\neq 0$, $S_0>0$ and $E_0>0$ satisfying
    $\rho_0>b_0 $ and $ U_{1,0}^2<\gamma e^{S_0}\rho_0^{\gamma-1}<U_{1,0}^2+U_{2,0}^2 $, there exists also a class of smooth cylindrically symmetric supersonic spiral flows to the problem \eqref{1-3} and \eqref{1-4} satisfying $ |\bar{\bf M}|^2(r)>1>\bar M_1^2(r) $.
     Such a supersonic flow is also structurally stable under the same perturbations as in \eqref{1-c-c-r-A} within the class of axisymmetric flows. However,
the structural stability of this background solution under cylindrical perturbations is much more complicated than the case in Theorems \ref{th1}  and \ref{th2}. The main difficulty is to establish  a priori $H^1 $
energy estimate  for the associated linearized hyperbolic-elliptic coupled system. As we shall see later in Lemma \ref{pro4}, The positivity of $ \e A_{22} $ plays an important role in searching for an appropriate multiplier for the  linearized problem.  In this case, we  cannot determine the sign of $ \e A_{22} $. How to use the multiplier method to obtain a priori $H^1 $
energy estimate will be investigated in the future. }
\end{remark}
\par The rest of this paper will be arranged as follows. In Section 2, we   establish the basic and higher order energy estimates to the linearized second order hyperbolic-elliptic mixed system and construct approximated solutions by the Galerkin method to prove the structural stability of the background supersonic
flow within the class of cylindrical  irrotational flows. In Section 3, we employ the deformation-curl decomposition-Poisson for the  steady Euler
 Poisson system and design a two-layer iteration to demonstrate the existence of smooth cylindrical supersonic rotational flows. In Section 4, we prove  the structural stability of the background supersonic flow within the class of axisymmetric flows.
\section{The stability analysis within cylindrical  irrotational flows}\label{irrotational}\noindent

\par   In this section, we establish   the structural stability of the background supersonic flow within cylindrical supersonic irrotational flows.   Firstly, the potential function corresponding to the background supersonic flow is
$$\bar\psi(r,\theta)=\int_{r_0}^r \bar U_{1}(s)ds + J_2 \theta, \ \  (r,\th)\in \Omega.$$
 It is easy to see that $ \bar\psi $  has a nonzero circulation, which is not periodic in $\theta$. To avoid the trouble, we denote the difference between the flow and the background flow by
\begin{eqnarray}\label{3-5}
V_1(r,\th)= U_1(r,\th)- \bar U_{1}(r),\ \ V_2(r,\th)= U_2(r,\th)- \bar U_{2}(r),\ \ \Psi(r,\th)= \Phi(r,\th)-\bar\Phi(r), \ \  (r,\th)\in \Omega.
\end{eqnarray}
Then the density  can be rewritten as
\begin{eqnarray}\label{3-6}
\rho=
\bigg(\frac{\gamma-1}{\gamma e^{S_0}}\bigg(K_0+\Psi+\bar\Phi-\frac{1}{2}\left((V_1+\bar U_1)^2+(V_2+\bar U_2)^2\right)\bigg)\bigg)
^{\frac{1}{\gamma-1}}, \ \ \text{in}\ \ \Omega.
\end{eqnarray}
By substituting \eqref{3-5} and \eqref{3-6} into  \eqref{3-1}, it is easy to see that $(V_1,V_2,\Psi)$ satisfies
  \begin{equation}\label{3-7}
  \begin{cases}
 \partial_r\left(r\rho (V_1+\bar U_1)\right)+\partial_\theta\left(\rho (V_2+\bar U_2)\right)=0,\ \ &{\rm{in}}\ \ \Omega,\\
\partial_r(r V_2)-\partial_\theta V_1=0,\ \ &{\rm{in}}\ \ \Omega,\\
\left(\p_r^2+\frac 1 r\p_r+\frac{1}{r^2}\p_{\th}^2\right)(\Psi+\bar \Phi)=\rho-b,\ \ &{\rm{in}}\ \ \Omega,\\
(V_1, V_2,   \p_r\Psi)(r_0,\th)=(U_{1,en}, U_{2,en},E_{en})(\th)-(U_{1,0}, U_{2,0},E_0),\ \ &{\rm{on}}\ \ \Gamma_{en},\\
\Psi(r_1,\th)=\Phi_{ex}(\th)-\bar\Phi(r_1),\ \ &{\rm{on}}\ \ \Gamma_{ex}.\\
\end{cases}
\end{equation}
 Define the potential function
\begin{equation*}
\begin{aligned}
\psi(r,\theta)&=\int_{r_0}^rV_1(s,\theta)\de s+\int_0^\theta\bigg(r_0V_2(r_0,
s)+d_0\bigg)\de s\\
&=\int_{r_0}^rV_1(s,\theta)\de s+\int_0^{\theta}\bigg(r_0(U_{2,en}(s)-U_{2,0})+ d_0\bigg) \de s,\ \  (r,\th)\in \Omega,
\end{aligned}
\end{equation*}
where $d_0$ is introduced so that $\psi(r,\theta)=\psi(r,\theta+2\pi)$. Indeed, $$d_0=- \frac{r_0}{2\pi}\int_0^{2\pi}\bigg(U_{2,en}(s)-U_{2,0}\bigg)\de s.$$  Then $\psi$ is periodic in $\theta$ with period $2\pi$ and satisfies
\begin{align}\label{3-8}
\partial_r\psi=V_1,\ \ \partial_\theta\psi=r V_2+d_0, \ \ \psi(r_0,0)=0, \ \ \text{in}\ \ \Omega.
\end{align}
Substituting \eqref{3-8} into \eqref{3-7} obtains
\begin{equation}\label{3-9}
  \begin{cases}
  L_1(\psi,\Psi)=\p_r^2\psi
  -A_{22}(r,\th,V_1,V_2,\Psi)\p_\th^2\psi
   +\bigg(A_{12}(r,\th,V_1,V_2,\Psi)
   +A_{21}(r,\th,V_1,V_2,\Psi)\bigg)
  \p_{r\th}^2\psi\\
  \ \ +\bar a_1(r)\p_r\psi
 +\bar a_2(r)\p_\th\psi   +\bar b_1(r)\p_r\Psi
    +\bar b_2(r)\p_\th\Psi+\bar b_3(r)\Psi
    =F_1(r,\th,V_1,V_2,\Psi,\n \Psi),\ \ &\text{in}\ \ \Omega,\\
 L_2(\psi,\Psi)=\left(\p_r^2+\frac 1 r\p_r+\frac{1}{r^2}\p_{\th}^2\right)\Psi+ \bar a_3(r)\p_r\psi+\bar a_4(r)\p_\th\psi-\bar b_4(r)\Psi =F_2(r,\th,V_1,V_2,\Psi),\ \ &\text{in}\ \ \Omega,\\
\p_r\psi(r_0,\th)=U_{1,en}(\th)-U_{1,0}, \ \  \psi(r_0,\th)=\int_0^{\theta}\bigg(r_0(U_{2,en}(s)-U_{2,0})+ d_0\bigg) \de s  ,\ \ &\text{on}\ \ \Gamma_{en},\\
 \p_r\Psi(r_0,\th)=E_{en}(\th)-E_0, \ \ &\text{on}\ \ \Gamma_{en},\\ \Psi(r_1,\th)=\Phi_{ex}(\th)-\bar\Phi(r_1),\ \ &\text{on}\ \ \Gamma_{ex},\\
\end{cases}
\end{equation}
where
\begin{equation*}
  \begin{aligned}
 &A_{22}(r,\th, V_1,V_2,\Psi)=\frac{(\bar U_2+V_2)^2-c^2(K_0,\bar U_1+V_1, \bar U_2+V_2,\bar\Phi+\Psi)}{r^2\left(c^2(K_0,\bar U_1+V_1, \bar U_2+V_2,\bar\Phi+\Psi)-(\bar U_1+V_1)^2\right)},\\
 & A_{12}(r,\th, V_1,V_2,\Psi)= A_{21}(r,\th, V_1,V_2,\Psi)=-\frac{(\bar U_1+V_1) (\bar U_2+V_2)}{r\left(c^2(K_0,\bar U_1+V_1, \bar U_2+V_2,\bar\Phi+\Psi)-(\bar U_1+V_1)^2\right)},\\
 & \bar a_1(r)=\frac1{\bar c^2-{\bar U_1^2}}\bigg(-(\gamma+1)\bar U_{1}\bar U_{1}'+\frac{\bar c^2}r-\bar U_{2}\bar U_{2}'-(\gamma-1)\frac{\bar U_{1}^2}r +\bar E\bigg)\\
 &\qquad=\frac1{\bar c^2-{\bar U_1^2}}\bigg(\frac{(\gamma+1)(1+\bar M_{2}^2)}{r(1-\bar M_{1}^2)}\bar U_{1}^2 +\frac{(\gamma+1)\bar E}{(1-\bar M_{1}^2)\bar c^2}\bar U_{1}^2+\frac{\bar c^2+\bar U_{2}^2}r -\frac{(\gamma-1)\bar U_{1}^2}r
+\bar E\bigg), \\
& \bar a_2(r)=\frac1{\bar c^2-{\bar U_1^2}}\bigg(-(\gamma-1)\frac1r\bar U_{2}\bar U_{1}'-\frac{\bar U_{1}\bar U_{2}'}{r}-(\gamma-2)\frac{\bar U_{1}\bar U_{2}}{r^2} \bigg)\\
  &\qquad =\frac1{\bar c^2-{\bar U_1^2}}\bigg(\frac{2(1-\bar M_{1}^2)+(\gamma-1)(\bar M_{1}^2+\bar M_{2}^2)}{1-\bar M_{1}^2}+ \frac{(\gamma-1) r\bar E}{(1-\bar M_{1}^2)\bar c^2}\bigg)\frac{\bar U_{1}\bar U_{2}}{r^2},\\
  &
 \bar b_1(r)=\frac1{\bar c^2-{\bar U_1^2}}\bar U_{1}, \ \
     \bar b_2(r)=\frac1{\bar c^2-{\bar U_1^2}}\frac{\bar U_{2}}{r}, \ \ \bar b_3(r)=\frac1{\bar c^2-{\bar U_1^2}}\bigg((\gamma-1)\bar U_1'+(\gamma-1)\frac{\bar U_1}{r}\bigg),\\
 &\bar a_3(r)=\frac{\bar\rho \bar U_{1}}{\bar c^2}, \quad \quad  \bar a_4(r)=\frac{\bar\rho \bar U_{2}}{r\bar c^2}, \quad \quad   \bar  b_4(r)=\frac{\bar\rho}{ \bar c^2},\\
  &F_1(r,\th,V_1,V_2,\Psi,\n \Psi)=\frac1{c^2(K_0,\bar U_1+V_1, \bar U_2+V_2,\bar\Phi+\Psi)-(\bar U_1+V_1)^2}\bigg(\bar a_2d_0
    +\bar U_1'\bigg(\frac{\gamma+1}{2} V_1^2\\
    &\quad +\frac{\gamma-1}{2} V_2^2\bigg)+\bar U_{2}' V_1 V_2
  + \frac{\bar U_1}{r}\frac{\gamma-1}{2} (V_1^2+V_2^2)
   + \frac{V_1}{r}\bigg((\gamma-1)(-\Psi+\bar U_{1}V_1+ \bar U_{2} V_2)\\
  &\quad + \frac{\gamma-1}{2} (V_1^2+V_2^2)\bigg)-(U_1U_2-\bar U_{1}\bar U_{2})\frac{1}{r^2}(r V_2+d_0)  -V_1\p_r\Psi-\frac{1}rV_2\p_\th\Psi\bigg) \\
& \quad -\bigg(\frac1{c^2(K_0,\bar U_1+V_1, \bar U_2+V_2,\bar\Phi+\Psi)-(\bar U_1+V_1)^2}-\frac1{\bar c^2-\bar U_1^2}\bigg)\bigg(\bigg(-(\gamma+1)\bar U_{1}\bar U_{1}'+\frac{\bar c^2}r\\
&\quad -\bar U_{2}\bar U_{2}'-(\gamma-1)\frac{\bar U_{1}^2}r +\bar E\bigg)V_1+\bigg(-(\gamma-1)\frac1r\bar U_{2}\bar U_{1}'-\frac{\bar U_{1}\bar U_{2}'}{r}-(\gamma-2)\frac{\bar U_{1}\bar U_{2}}{r^2} \bigg)(r V_2+d_0)  \\
 &\quad+\bar U_{1}\p_r\Psi
    +\frac{\bar U_{2}}{r}\p_\th\Psi+\bigg((\gamma-1)\bar U_1'+(\gamma-1)\frac{\bar U_1}{r}\bigg)\Psi\bigg), \ \
\n=\left(\p_r,\frac{\p_\th}r\right),\\
&F_2(r,\th,V_1,V_2,\Psi)=\bar a_4d_0+ \rho-\bar\rho+\bar a_3 V_1+ r\bar a_4V_2-\bar b_4\Psi-(b-b_0).\\
  \end{aligned}
\end{equation*}
\par  Define an iteration set
\begin{equation}\label{3-9-s}
\mj_{\delta,r_1}=\bigg\{(\psi,\Psi)\in \left(H^4(\m)\right)^2: \|(\psi,\Psi)\|_{ H^4(\Omega)}\leq \delta\bigg\},
\end{equation}
where the positive constant  $\delta$ and $ r_1$   will be determined later. For any function $(\e\psi,\e\Psi)\in \mj_{\delta,r_1}$, set
\begin{eqnarray*}
\e V_1=\partial_r\e\psi,\ \  \e V_2=\frac{\partial_\theta\e\psi-d_0}{r}, \ \ \text{in}\ \ \Omega.
\end{eqnarray*}
 Then we  consider the following linearized problem:
 \begin{equation}\label{3-10-1}
  \begin{cases}
   L_1(\psi,\Psi)=\p_r^2\psi
  -A_{22}(r,\th,\e V_1,\e V_2,\e \Psi)\p_\th^2\psi
   +2A_{21}(r,\th,\e V_1,\e V_2,\e \Psi)
  \p_{r\th}^2\psi\\
  \qquad\qquad \ \ +\bar a_1(r)\p_r\psi
 +\bar a_2(r)\p_\th\psi+\bar b_1(r)\p_r\Psi
    +\bar b_2(r)\p_\th\Psi+\bar b_3(r)\Psi \\
 \qquad\qquad \ \    =F_1(r,\th,\e V_1,\e V_2,\e\Psi,\n \e\Psi),\ \ &\text{in}\ \ \Omega,\\
 L_2(\psi,\Psi)=\left(\p_r^2+\frac 1 r\p_r+\frac{1}{r^2}\p_{\th}^2\right)\Psi+ \bar a_3(r)\p_r\psi+\bar a_4(r)\p_\th\psi-\bar b_4(r)\Psi \\
 \qquad\qquad \ \ =F_2(r,\th,\e V_1,\e V_2,\e \Psi),\ \ &\text{in}\ \ \Omega,\\
\p_r\psi(r_0,\th)=U_{1,en}(\th)-U_{1,0}, \ \ \p_\th \psi(r_0,\th)=\int_0^{\theta}\bigg(r_0(U_{2,en}(s)-U_{2,0})+ d_0\bigg) \de s  ,\ \ &\text{on}\ \ \Gamma_{en},\\
 \p_r\Psi(r_0,\th)=E_{en}(\th)-E_0, \ \ &\text{on}\ \ \Gamma_{en},\\ \Psi(r_1,\th)=\Phi_{ex}(\th)-\bar\Phi(r_1),\ \ &\text{on}\ \ \Gamma_{ex}.\\
\end{cases}
\end{equation}
 Set
\begin{equation*}
\begin{aligned}
&\h\psi(r,\th)=\psi(r,\th)-\int_{0}^{\th} \bigg(r_0(U_{2,en}(s)-U_{2,0})+d_0\bigg)\de s, \ \  &(r,\th)\in \Omega, \\
&\h \Psi(r,\th)=\Psi(r,\th)-(r-r_1)(E_{en}(\th)-E_0)-(\Phi_{ex}(\th)-\bar\Phi(r_1)), \ \  &(r,\th)\in \Omega.
\end{aligned}
\end{equation*}
Then \eqref{3-10-1} can be transformed into
\begin{equation}\label{3-10}
  \begin{cases}
  L_1(\h\psi,\h\Psi)=\p_r^2\h\psi
  -A_{22}(r,\th, \e V_1, \e V_2,\e \Psi)\p_\th^2\h\psi +2A_{12}(r,\th,\e V_1, \e  V_2,\e \Psi)
   \p_{r\th}^2\h\psi\\
  \qquad\qquad \ \ +\bar a_1(r)\p_r\h\psi
   +\bar a_2(r)\p_\th\h\psi +\bar b_1(r)\p_r\h\Psi
    +\bar b_2(r)\p_\th\h\Psi+\bar b_3(r)\h\Psi\\
   \qquad\qquad \ \ =F_3(r,\th, \e V_1, \e V_2,\e \Psi, \n\e\Psi),\ \ &\text{in}\ \ \Omega,\\
 L_2(\h\psi,\h\Psi)=\left(\p_r^2+\frac 1 r\p_r+\frac{1}{r^2}\p_{\th}^2\right)\h\Psi+ \bar a_3(r)\p_r\h\psi+\bar a_4(r)\p_\th\h\psi-\bar b_4(r)\h\Psi\\
\qquad\qquad \ \ =F_4(r,\th, \e V_1, \e V_2,\e \Psi),\ \ &\text{in}\ \ \Omega,\\
\p_r\h\psi(r_0,\th)=F_5(\th),  \ \  \h\psi(r_0,\th)=
 \p_r\h\Psi(r_0,\th)=0, \ \ &\text{on}\ \ \Gamma_{en},\\ \h\Psi(r_1,\th)=0,\ \ &\text{on}\ \ \Gamma_{ex},\\
\end{cases}
\end{equation}
where
\begin{equation*}
  \begin{aligned}
 & F_3(r,\th, \e V_1,\e V_2,\e\Psi, \n\e\Psi)=F_1+A_{22}r_0 U_{2,en}'
 -\x a_2(r_0(U_{2,en}-U_{2,0})+d_0)-\x b_1(E_{en}-E_0)\\
 &\quad -\x b_2\bigg((r-r_1)E_{en}'+\Phi_{ex}'\bigg) -\x b_3\bigg((r-r_1)(E_{en}-E_0)+(\Phi_{ex}-\bar\Phi(r_1))\bigg), \\
 & F_4(r,\th, \e V_1,\e V_2,\e\Psi)=F_2-\bigg(\frac1r (E_{en}-E_0 ) +\frac{1}{r^2}\bigg((r-r_1)E_{en}''+\Phi_{ex}''\bigg)\bigg)-\x a_4(r_0(U_{2,en}-U_{2,0})+d_0)\\
 &\quad
  +\x b_4\bigg((r-r_1)(E_{en}-E_0)+(\Phi_{ex}-\bar\Phi(r_1))\bigg),  \\
  &F_5(\th)=  U_{1,en}(\th)-U_{1,0}.
  \end{aligned}
\end{equation*}
It   can be directly checked that there exists a constant $C>0$ depending only on $(\gamma,b_0, \rho_0, U_{1,0},U_{2,0},$\\$S_{0},E_{0})$ such that
 \begin{equation}\label{3-45-e}
 \begin{aligned}
 &\|  F_3(\cdot,
 \e V_1,\e V_2,\e\Psi, \n\e\Psi)\|_{H^3(\m)}+\|  F_4(\cdot,
 \e V_1,\e V_2,\e\Psi)\|_{H^2(\m)}+\|F_5\|_{H^3(\mathbb{T}_{2\pi})}\\
 &\leq C
  \bigg(\|  F_1(\cdot,
 \e V_1,\e V_2,\e\Psi, \n\e\Psi)\|_{H^3(\m)}+\|  F_2(\cdot,
 \e V_1,\e V_2,\e\Psi)\|_{H^2(\m)}+\|F_5\|_{H^3(\mathbb{T}_{2\pi})}\\
 &\quad\ \ \ \ +\omega_1(b,U_{ 1,en},U_{2, en},E_{ en},\Phi_{ ex})\bigg)\leq
 C\left(\delta^2+\omega_1(b,U_{ 1,en},U_{2, en},E_{ en},\Phi_{ ex})\right).
 \end{aligned}
 \end{equation}
  \par For a fixed  $ (\e\psi,\e\Psi)\in \mj_{\delta,r_1} $, denote
  \begin{equation*}
  \begin{aligned}
&\e A_{i2}(r,\th):=A_{i2}(r,\th,\e V_1, \e V_2,\e \Psi),\quad \ \  (r,\th)\in \Omega,\ i=1,2,\\
&\e F_j(r,\th):=F_j (r,\th,\e V_1,\e V_2,\e\Psi, \n\e\Psi),\   (r,\th)\in \Omega, \ j=1,3, \\
&\e F_k(r,\th):=F_k (r,\th,\e V_1,\e V_2,\e\Psi),\qquad    (r,\th)\in \Omega, \ k=2,4. \\
\end{aligned}
 \end{equation*}
  Then we have the following proposition.
  \begin{proposition}\label{pro2}
 Let  $R_0$ be given by  Proposition \ref{pro1}.
\begin{enumerate}[ \rm (i)]
\item For any $r_1\in(r_0,r_0+R_0)$, set
\begin{eqnarray*}
\begin{aligned}
\bar A_{22}(r)=\frac{\bar U_2^2-\bar c^2}{r^2(\bar c^2-{\bar U_1^2})}, \  \
\bar A_{12}(r)=
-\frac{\bar U_1\bar U_2}{r(\bar c^2-{\bar U_1^2})}, \ \    r\in [r_0, r_1].
\end{aligned}
\end{eqnarray*}
 Then there exists a constant $\bar\mu_0\in(0,1)$ depending only on $(\gamma,b_0, \rho_0,U_{1,0},U_{2,0}, S_{0},E_{0})$ such that
\begin{eqnarray}\label{2-7-a}
 {\bar\mu_0}\leq
\bar A_{22}(r)\leq \frac{1}{\bar\mu_0}, \
\ \  r\in [r_0, r_1] .
\end{eqnarray}
Furthermore, the coefficients $\bar A_{i2}$,   $\x a_j $ and $\x b_j  $ for $ i=1,2 $ and  $j=1,2,3,4$  are smooth functions. More precisely, for each $k\in\mathbb{Z}^+$, there exists a constant $\bar C_k>0$ depending only on $(\gamma,b_0, \rho_0,U_{1,0},U_{2,0}, S_{0}, E_{0},r_0,r_1,k)$ such that
\begin{equation}\label{2-7-b}
\Vert (\bar A_{12},\bar A_{22}, \x a_1,\x a_2,\x a_3,\x a_4, \x b_1, \x b_2,\x b_3,\x b_4)\Vert_{C^k([r_0,r_1])}\leq\bar C_k.
\end{equation}
\item For each $\epsilon_0\in(0, R_0)$,
there exists a positive constant $\delta_0$ depending only on  $(\gamma,b_0,   \rho_0, U_{1,0},U_{2,0},$\\$S_{0},E_{0},\epsilon_0)$ such that   for $r_1\in(r_0,r_0+R_0-\epsilon_0]$ and $\delta\leq \delta_0$, the coefficients $ ( \e A_{12},\e A_{22}) $ for $ (\e\psi,\e\Psi)\in \mj_{\delta, r_1}  $ with $\mj_{\delta, r_1} $    given by \eqref{3-9-s}  satisfying
\begin{eqnarray}\label{2-7-f}
 \|( \e A_{12},\e A_{22})-( \bar A_{12},\bar A_{22})\|_{H^3(\m)}\leq C\delta,
\end{eqnarray}
 and
 \begin{equation}\label{2-7-e}
\mu_0\leq \e A_{22}(r,\th)\leq \frac{1}{\mu_0},
\   \ (r,\th)\in\overline{\m}.
\end{equation}
Here  the positive constants  $C$ and $ \mu_0\in(0,1) $ depend only on $(\gamma,b_0, \rho_0, U_{1,0},U_{2,0},S_{0},E_{0},\epsilon_0)$.
\end{enumerate}
\end{proposition}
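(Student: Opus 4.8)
The plan is to establish Proposition \ref{pro2} by combining three ingredients: the explicit formulas for the background coefficients, the background positivity of $\bar A_{22}$ coming from the supersonic regime in the angular direction, and a perturbation argument using the Sobolev embedding $H^3(\m)\hookrightarrow C^{1,\alpha}(\overline{\m})$ (recall $\dim\m=2$).

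\textbf{Part (i).} First I would compute $\bar A_{22}(r)$ and $\bar A_{12}(r)$ by substituting $V_1=V_2=\Psi=0$ into the definitions of $A_{22},A_{12}$ given right after \eqref{3-9}, which immediately yields the stated formulas $\bar A_{22}=(\bar U_2^2-\bar c^2)/(r^2(\bar c^2-\bar U_1^2))$ and $\bar A_{12}=-\bar U_1\bar U_2/(r(\bar c^2-\bar U_1^2))$. The two-sided bound \eqref{2-7-a} is then a direct consequence of Proposition \ref{pro1}: the inequality \eqref{1-6} guarantees $\bar c^2-\bar U_1^2>0$ and $\bar U_2^2-\bar c^2>0$ on $[r_0,r_1]$, so $\bar A_{22}>0$ there, while continuity of the quantities $\bar\rho,\bar U_1,\bar U_2,\bar c$ on the compact interval $[r_0,r_1]$ (these are smooth by the ODE theory used in Proposition \ref{pro1}) gives uniform positive lower and upper bounds; the dependence of $\bar\mu_0$ only on $(\gamma,b_0,\rho_0,U_{1,0},U_{2,0},S_0,E_0)$ follows because $R_0$ and hence the whole background profile depends only on these data. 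For \eqref{2-7-b}, I would observe that $\bar\rho,\bar E$ solve the ODE system \eqref{1-5} with smooth (indeed real-analytic) right-hand side, so by bootstrapping they are $C^\infty$ on $[r_0,r_1]$; the denominators $\bar c^2-\bar U_1^2$ appearing in $\x a_j,\x b_j$ are bounded away from zero by \eqref{1-6}, hence all these coefficients are smooth functions of $r$ with $C^k$ norms controlled by constants depending only on the listed data, $r_0,r_1,k$.

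\textbf{Part (ii).} The core of the proof. Given $(\e\psi,\e\Psi)\in\mj_{\delta,r_1}$, I set $\e V_1=\p_r\e\psi$, $\e V_2=(\p_\th\e\psi-d_0)/r$, so that $\|(\e V_1,\e V_2)\|_{H^3(\m)}\le C\delta$ and $\|\e\Psi\|_{H^3(\m)}\le\delta$. The functions $A_{i2}(r,\th,V_1,V_2,\Psi)$ are smooth (rational) functions of their arguments as long as the denominator $c^2(K_0,\bar U_1+V_1,\bar U_2+V_2,\bar\Phi+\Psi)-(\bar U_1+V_1)^2$ stays positive, which by \eqref{1-6} it does at the background and hence, by continuity and the smallness of the perturbation in $C^0$ (via $H^3\hookrightarrow C^{1,\alpha}$), it remains positive provided $\delta$ is small enough. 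Then $\e A_{i2}-\bar A_{i2}$ is a smooth function of $(\e V_1,\e V_2,\e\Psi)$ vanishing when these vanish, so by the chain rule and the multiplicative/composition estimates for $H^3$ (again using $H^3\hookrightarrow L^\infty$ in two dimensions so that $H^3$ is an algebra) one gets $\|(\e A_{12},\e A_{22})-(\bar A_{12},\bar A_{22})\|_{H^3(\m)}\le C\|(\e V_1,\e V_2,\e\Psi)\|_{H^3(\m)}\le C\delta$, which is \eqref{2-7-f}. Finally \eqref{2-7-e} follows by writing $\e A_{22}(r,\th)=\bar A_{22}(r)+(\e A_{22}-\bar A_{22})(r,\th)$, using \eqref{2-7-a} for the first term and $\|\e A_{22}-\bar A_{22}\|_{C^0(\overline\m)}\le C\|\e A_{22}-\bar A_{22}\|_{H^3(\m)}\le C\delta$ for the second; choosing $\delta_0$ so small that $C\delta_0\le\bar\mu_0/2$ gives \eqref{2-7-e} with $\mu_0=\bar\mu_0/2$.

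\textbf{Main obstacle.} The only genuinely delicate point is making the constants uniform in $r_1\in(r_0,r_0+R_0-\epsilon_0]$ rather than depending on the particular $r_1$: this is why the restriction to $r_0+R_0-\epsilon_0$ (rather than all the way to $r_0+R_0$) is imposed. One must check that the background quantities $\bar c^2-\bar U_1^2$, $\bar U_2^2-\bar c^2$ are bounded below by a positive constant uniformly on $[r_0,r_0+R_0-\epsilon_0]$ — which holds because the strict inequalities in \eqref{1-6} persist up to $r_0+R_0$ with margins that, by compactness of $[r_0,r_0+R_0-\epsilon_0]$ and the continuous dependence established in the proof of Proposition \ref{pro1}, are bounded below in terms of $\epsilon_0$ and the entrance data. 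Similarly the $H^3$ composition/multiplication constants on $\m$ depend on $|\m|$, hence only on $r_1\le r_0+R_0$, so they too can be taken uniform. Once this uniformity is in hand, the perturbation argument is routine, and the key structural fact that actually drives the later analysis — the strict positivity \eqref{2-7-e} of $\e A_{22}$, reflecting that the linearized principal operator $L_1$ is hyperbolic in the $\th$-direction — is secured.
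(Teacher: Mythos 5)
Your proposal is correct and follows the natural argument that Proposition \ref{pro2} implicitly relies on (the paper states it without proof, treating it as a routine verification): part (i) by evaluating $A_{i2}$ at the background — using the Bernoulli relation \eqref{momentum-1} to see $c^2(K_0,\bar U_1,\bar U_2,\bar\Phi)=\bar c^2$ — together with \eqref{1-6}, compactness of $[r_0,r_1]$, and the smoothness of $(\bar\rho,\bar E)$ from the ODE \eqref{1-5}; part (ii) by a composition/Moser estimate in the algebra $H^3(\m)$ (valid since $\dim\m=2$) combined with the embedding $H^3(\m)\hookrightarrow C^0(\overline\m)$.

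One small point worth flagging in your closing paragraph: the restriction to $r_1\le r_0+R_0-\epsilon_0$ is not needed for the uniformity of the lower bound $\bar\mu_0$ (or $\mu_0$). The proof of Proposition \ref{pro1} gives the strict inequalities $\bar U_1^2<\bar c^2<\bar U_2^2$ on the \emph{closed} interval $[r_0,r_0+R_0]$, so $\bar c^2-\bar U_1^2$ and $\bar U_2^2-\bar c^2$ already have positive minima there, uniformly over all $r_1\le r_0+R_0$; your perturbation of $\bar A_{22}$ then yields \eqref{2-7-e} with $\delta_0$ chosen independently of $\epsilon_0$. The $\epsilon_0$-dependence recorded in part (ii) is best read as inherited from the downstream multiplier construction in Lemma \ref{pro4}, where infima of coefficient combinations over $(r_0,r_0+R_0-\epsilon_0]$ enter the choice of $Q$ and $\lambda_0$, not as a prerequisite for the perturbation estimate you give. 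This does not affect the correctness of your argument.
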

\subsection{Energy estimates  for the linearized problem }\noindent
\par In this subsection, we  derive the energy estimates for the problem   \eqref{3-10} under the assumptions that $(\e A_{12},\e A_{22})\in \left(C^{\infty}(\overline{\Omega})\right)^2$ and   \eqref{2-7-f}-\eqref{2-7-e}  hold.
\begin{lemma}\label{pro4}
 Let   $R_0 $ and  $ \delta_0$  be from  Propositions \ref{pro1} and  \ref{pro2}, respectively.
  For each $\epsilon_0\in(0,R_0)$, there exists a constant $\bar r_1\in(r_0,r_0+R_0-\epsilon_0]$ depending only $(\gamma,b_0,   \rho_0, U_{1,0},U_{2,0},S_{0},E_{0},\epsilon_0)$
such that for  $r_1\in(r_0,\bar r_1)$ and $\delta\leq \delta_0$, the classical  solution $ (\h\psi,\h\Psi) $ to  \eqref{3-10} satisfies  the energy estimate
\begin{equation}\label{3-11}
\|(\h\psi,\h\Psi)\|_{H^1(\m)}\leq C\left(\|\e F_3\|_{L^2(\m)}+\|\e F_4\|_{L^2(\m)}+\|F_5\|_{L^2(\mathbb{T}_{2\pi})}\right),
\end{equation}
 where    $C$  is a positive  constant depending only on $(\gamma,b_0,   \rho_0, U_{1,0},U_{2,0},S_{0},E_{0},r_0,r_1,\epsilon_0)$.
\end{lemma}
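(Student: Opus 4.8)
The plan is to establish the basic $H^1$ estimate \eqref{3-11} by the multiplier method, exploiting the hyperbolic-elliptic coupled structure of \eqref{3-10}. First I would test the hyperbolic equation $L_1(\h\psi,\h\Psi)=\e F_3$ against a multiplier of the form $(m_1(r)\p_r\h\psi + m_2(r)\p_\th\h\psi + m_3(r)\h\psi)$, where the weights $m_1,m_2,m_3$ are to be determined. After integration by parts over $\m$ (using periodicity in $\th$ to discard all $\th$-boundary terms), the principal part $\p_r^2\h\psi - \e A_{22}\p_\th^2\h\psi + 2\e A_{12}\p_{r\th}^2\h\psi$ produces a quadratic form in $(\p_r\h\psi,\p_\th\h\psi)$ in the interior plus boundary integrals on $\Gamma_{en}$ and $\Gamma_{ex}$. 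The key point, as flagged in the paper's remark, is that the positivity $\e A_{22}\geq \mu_0>0$ from \eqref{2-7-e} is exactly what makes it possible to choose $m_1(r)$ to be a suitable increasing (or decreasing) exponential-type weight so that the interior quadratic form is bounded below by $\beta(|\p_r\h\psi|^2 + |\p_\th\h\psi|^2)$ for some $\beta>0$, while simultaneously keeping the zeroth-order terms coming from $\bar a_1,\bar a_2,\bar b_3$ under control; the cross term $2\e A_{12}\p_{r\th}^2\h\psi$ is handled since $\e A_{12}$ is close to $\bar A_{12}$ and the discriminant condition $\bar A_{22}>\bar A_{12}^2/(\cdots)$ (equivalently supersonicity in the $\th$-direction) guarantees the form stays positive definite after completing the square.

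Next I would test the elliptic equation $L_2(\h\psi,\h\Psi)=\e F_4$ against $\h\Psi$ (or $n(r)\h\Psi$ with another weight $n(r)>0$), integrate by parts, and use the boundary conditions $\p_r\h\Psi(r_0,\cdot)=0$ on $\Gamma_{en}$ and $\h\Psi(r_1,\cdot)=0$ on $\Gamma_{ex}$ to kill the boundary terms; this yields control of $\|\n\h\Psi\|_{L^2(\m)}^2 + \|\h\Psi\|_{L^2(\m)}^2$ in terms of $\|\e F_4\|_{L^2}^2$ plus the cross terms $\int(\bar a_3\p_r\h\psi + \bar a_4\p_\th\h\psi)\h\Psi$, which couple back to the hyperbolic estimate. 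Adding the two estimates with a suitably small coupling parameter, the cross terms $\int \bar b_1\p_r\h\Psi\,(\text{multiplier}) + \int(\bar a_3\p_r\h\psi)\h\Psi$ etc. are absorbed by Cauchy-Schwarz and Young's inequality into the good terms $\beta\|\n\h\psi\|_{L^2}^2$ and $\|\n\h\Psi\|_{L^2}^2$, provided $r_1-r_0$ is small enough (this is where the restriction $r_1\in(r_0,\bar r_1)$ enters: a Poincar\'e-type inequality on a thin annulus gives $\|\h\psi\|_{L^2}\lesssim (r_1-r_0)\|\p_r\h\psi\|_{L^2}$ after using $\h\psi(r_0,\cdot)=0$, which lets one trade the zeroth-order bad terms against the gradient). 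The boundary integral on $\Gamma_{en}$ involving $\p_r\h\psi(r_0,\th)=F_5(\th)$ contributes the $\|F_5\|_{L^2(\mathbb{T}_{2\pi})}$ term, estimated by a trace inequality.

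The main obstacle will be the simultaneous choice of the multiplier weights $m_1,m_2,m_3$ (and $n$) that makes \emph{all} of the following hold at once: (a) the interior quadratic form from $L_1$ is coercive in $\n\h\psi$ uniformly for $\e A_{i2}$ in the $\delta$-neighborhood of $\bar A_{i2}$; (b) the boundary term on $\Gamma_{ex}$ from the hyperbolic part has a favorable sign (or is controlled by $\h\Psi$-terms which vanish there), and similarly the $\Gamma_{en}$ boundary term is either nonnegative or absorbable; (c) the zeroth-order coefficients $\bar a_1,\bar a_2,\bar b_3,\bar b_4$, whose signs are dictated by the background supersonic solution via Proposition \ref{pro1}, do not destroy coercivity. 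I expect one must compute the relevant combination explicitly using the ODE system \eqref{1-5}, \eqref{1-7}-\eqref{1-m} for the background quantities — in particular the signs of $\bar U_1',\bar\rho',\bar M_2^{2\prime}$ and the smallness of $r_1-r_0$ — to verify that a weight of the form $m_1(r)=e^{-\lambda r}$ (or $e^{\lambda r}$) with $\lambda$ large works. Once \eqref{3-11} is in hand, standard elliptic/hyperbolic regularity bootstrapping (carried out in later lemmas) upgrades it to the higher-order estimates needed to close the iteration on $\mj_{\delta,r_1}$.
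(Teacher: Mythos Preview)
Your overall plan---test $L_1$ against a weighted $r$-derivative of $\h\psi$, test $L_2$ against $\h\Psi$, and combine---is exactly what the paper does. But two of your stated mechanisms are wrong and would prevent the argument from closing. First, the paper's multiplier is simply $Q(r)\p_r\h\psi$; no $m_2\p_\th\h\psi$ or $m_3\h\psi$ terms appear, and the mixed principal term needs no discriminant or completing-the-square: $2\e A_{12}\p_{r\th}^2\h\psi\cdot Q\p_r\h\psi=Q\e A_{12}\p_\th(\p_r\h\psi)^2$ integrates by parts in $\th$ to $-Q\p_\th\e A_{12}(\p_r\h\psi)^2$, a harmless lower-order contribution. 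Only the positivity $\e A_{22}\ge\mu_0$ is used, to make the $(\p_\th\h\psi)^2$ term coercive. On the elliptic side the paper exploits a structural fact you do not mention: the background hypothesis $U_{2,0}^2<2r_0^2\rho_0$ of Proposition~\ref{pro1} forces $\bar b_4(r)-\tfrac{1}{2r^2}=\tfrac{\bar\rho}{\bar c^2}-\tfrac{1}{2r^2}>0$, giving a strictly positive zeroth-order coefficient $\vartheta_{r_1}$ in front of $\h\Psi^2$ directly.

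Second, your exponential weight $m_1=e^{-\lambda r}$ with $\lambda$ large and your Poincar\'e-on-a-thin-annulus explanation for the smallness of $r_1-r_0$ both fail. After bounding the coupling terms (e.g.\ $Q\bar b_1\p_r\h\Psi\cdot\p_r\h\psi$ and $\bar a_3\p_r\h\psi\cdot\h\Psi$) by Cauchy, the coercivity requirement on $Q$ becomes Riccati-type: $-Q'+2\h{\mathfrak a}_1Q-\mathfrak a_2Q^2-\mathfrak a_0\ge\lambda_0$, with $\mathfrak a_2,\mathfrak a_0>0$. An exponential cannot satisfy this uniformly: the $-\mathfrak a_0$ term (coming from $\bar a_3,\bar a_4$) forces $Q$ bounded below, while $-\mathfrak a_2 Q^2$ forces $Q$ bounded above, and neither bound improves as $\lambda\to\infty$. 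The paper instead \emph{solves} this Riccati ODE for $Q$, and the restriction $r_1\in(r_0,\bar r_1)$ is precisely what keeps $[r_0,r_1]$ inside the finite existence interval of that solution. Your Poincar\'e argument is aimed at zeroth-order terms in $\h\psi$, but $L_1$ contains none; the problematic terms are first-order in $\h\psi$ and cannot be traded down.
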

\begin{proof}
The basic  energy estimate \eqref{3-11} can be obtained   by  the multiplier method. More precisely, let  $ Q $ be  a smooth function of  $ r $ in $[r_0,r_1]$ to be determined, set
\begin{equation*}
\begin{aligned}
I_1(\h\psi,\h\Psi,Q)=\iint_{\m}QL_1(\h\psi,\h\Psi)\p_r\h\psi\de r \de \th, \ \ I_2(\h\psi,\h\Psi)=\iint_{\m}L_2(\h\psi,\h\Psi)\h\Psi\de r \de \th.
\end{aligned}
\end{equation*}
For $ I_1 $,  integration by parts leads to
$$ I_1(\h\psi,\h\Psi, Q)=\sum_{i=1}^4 I_{1i}(\h\psi,\h\Psi,Q), $$
where
\begin{equation*}
\begin{aligned}
I_{11}(\h\psi,\h\Psi,Q)&=\iint_{\m}Q\p_r^2 \h\psi \p_r\h\psi\de r \de \th \\
&=\int_{0}^{2\pi}\frac12\bigg(Q(\p_r \h\psi)^2 \bigg)(r_1,\th) \de \th-\int_{0}^{2\pi}\frac12\bigg(Q(\p_r \h\psi)^2 \bigg)(r_0,\th) \de \th\\
&\quad -\iint_{\m}\frac12Q'(\p_r \h\psi)^2\de r \de \th,\\
I_{12}(\h\psi,\h\Psi,Q)&=\iint_{\m}2Q\e A_{12}\p_{r\th}^2 \h\psi \h\p_r\h\psi\de r \de \th
=-\iint_{\m}\p_{\th}(Q\e A_{12})(\p_r \h\psi)^2\de r \de \th,\\
I_{13}(\h\psi,\h\Psi,Q)&=\iint_{\m}-Q\e A_{22}\p_{\th}^2 \h\psi \p_r\h\psi\de r \de \th \\
&=\iint_{\m}Q\p_\th \e A_{22}\p_r\h\psi\p_\th\h\psi \de r\de \th+\int_{0}^{2\pi}\frac12\bigg(Q\e A_{22}(\p_\th \h\psi)^2 \bigg)(r_1,\th) \de \th\\
&\quad-
\iint_{\m}\frac12\p_r(Q\e A_{22})(\p_\th \h\psi)^2\de r \de \th,\\
I_{14}(\h\psi,\h\Psi,Q)&=\iint_{\m}Q(\x a_1\p_r\h\psi+\x a_2\p_\th\h\psi+\x b_1\p_r\h\Psi+\x b_2\p_\th\h\Psi+\x b_3 \h\Psi)\p_r\h\psi\de r \de \th.\\
\end{aligned}
\end{equation*}
For $I_2 $,   integration by parts gives
\begin{equation*}
\begin{aligned}
I_2(\h\psi,\h\Psi)&=\iint_{\m}\left(\bigg(\p_r^2+\frac 1 r\p_r+\frac{1}{r^2}\p_{\th}^2\bigg)\h\Psi+ \bar a_3\p_r\h\psi+\bar a_4\p_\th\h\psi-\bar b_4\h\Psi\right)\h\Psi\de r\de \th\\
&=-\iint_{\m}(\p_r\h\Psi)^2\de r\de \th-\iint_{\m}\frac1{r^2}(\p_\th\h\Psi)^2\de r\de \th\\
&\quad-\int_{0}^{2\pi}\left(\frac{\h\Psi^2}{2r}\right)(r_0,\th)\de \th+\iint_{\m}\frac{\h\Psi^2}{2r^2}\de r\de \th+\iint_{\m}\x a_3\p_r\h\psi\h\Psi\de r\de \th\\
&\quad+\iint_{\m}\x a_4\p_\th\h\psi\h\Psi\de r\de \th-\iint_{\m}\x b_4\h\Psi^2\de r\de \th.
\end{aligned}
\end{equation*}
 Then one obtains
\begin{equation*}
\begin{aligned}
I_1(\h\psi,\h\Psi,Q)-I_2(\h\psi,\h\Psi)=J_1(\h\psi,\h\Psi,Q)+J_2(\h\psi,\h\Psi,Q)
+J_3(\h\psi,\h\Psi,Q),
\end{aligned}
\end{equation*}
where
\begin{equation*}
\begin{aligned}
J_1(\h\psi,\h\Psi,Q)&=\iint_{\m} \bigg(\bigg(-\frac12Q'-Q\p_\th\e A_{12}+\bar a_1Q\bigg)(\p_r\h\psi)^2
- \frac12\p_{r}(Q\e A_{22})(\p_\th \h\psi)^2
 +\bigg(\bigg(\x b_4-\frac1{2r^2}\bigg)\h\Psi^2\\
 &\qquad\quad\ \ +(\p_r\h\Psi)^2+\frac1{r^2}(\p_\th\h\Psi)^2\bigg)\bigg)\de r\de \th,
\\
\end{aligned}
\end{equation*}
\begin{equation*}
\begin{aligned}
J_2(\h\psi,\h\Psi,Q)&=\iint_{\m}\bigg(Q(\x a_2\p_\th\h\psi+\x b_1\p_r\h\Psi+\x b_2\p_\th\h\Psi+\x b_3 \h\Psi)\p_r\h\psi-\x a_3\p_r\h\psi\h\Psi-\x a_4\p_\th\h\psi\h\Psi\bigg)\de r\de \th,\\
J_3(\h\psi,\h\Psi,Q)&=\int_{0}^{2\pi}\left(\frac12\bigg(Q(\p_r \h\psi)^2 \bigg)(r_1,\th) -\frac12\bigg(Q(\p_r \h\psi)^2 \bigg)(r_0,\th)+\frac12\bigg(Q\e A_{22}(\p_\th \h\psi)^2 \bigg)(r_1,\th)\right.\\
&\qquad\quad\ \ \left.+\bigg(\frac{\h\Psi^2}{2r}\bigg)(r_0,\th)\right)\de \th.
 \end{aligned}
\end{equation*}
\par Note that $ U_{2,0}^2<2r_0^2\rho_0 $. Thus it follows from Proposition \ref{pro1} that
\begin{equation*}
\x b_4(r)-\frac1{2r^2}=\frac{\bar \rho(r)}{\bar c^2(r)}-\frac1{2r^2}>\frac{\bar \rho(r)}{\bar U_2^2(r)}-\frac1{2r^2}
>
\frac{\rho_0}{U_{2,0}^2}-\frac1{2r^2}>0, \ \  r\in [r_0,r_0+R_0).
\end{equation*}
For each $ r_1\in (r_0,r_0+R_0) $, set
\begin{equation}\label{2-19-2}
 \vartheta_{r_1}=\min_{r\in[r_0,r_1]}\bigg(\x b_4(r)-\frac1{2r^2}\bigg),
\end{equation}
 which is a strictly positive constant depending only on $(\gamma,b_0, \rho_0, U_{1,0},U_{2,0}, S_{0},E_{0},r_0,r_1)$. Next, one can use the Cauchy inequality to obtain that
  \begin{equation}\label{3-12}
\begin{split}
\vert J_2(\h\psi,\h\Psi,Q)\vert
&\leq\iint_{\Omega}\bigg(\frac{1}{8}(\p_r{\h\Psi})^2+\frac{1}{8r^2}(\p_\th{\h\Psi})^2
+\frac{3\vartheta_{r_1}}{8}{\h\Psi}^2
+h_1(Q,r)(\p_r\h\psi)^2+h_2(Q,r)(\p_\th\h\psi)^2\bigg)
\mathrm{d}r\mathrm{d}\theta,
\end{split}
\end{equation}
where
 \begin{equation*}
\begin{aligned}
h_1(Q,r)=\frac12\x a_2^2+Q^2\bigg(2\x b_1^2+2r^2\x b_2^2+2\frac{\bar b_3^2}{\vartheta_{r_1}}\bigg)+2\frac{\bar a_3^2}{\vartheta_{r_1}},\  \ \
h_2(Q,r)=\frac12Q^2+2\frac{\bar a_4^2}{\vartheta_{r_1}}.
\end{aligned}
\end{equation*}
If the function $Q $ satisfies $ Q(r)>0 $ in $[r_0,r_1]$, by using \eqref{2-7-e} and \eqref{3-12}, there holds
\begin{equation}\label{3-13}
\begin{aligned}
&I_1(\h\psi,\h\Psi,Q)-I_2(\h\psi,\h\Psi)\\
&\geq \iint_{\m}\bigg(\bigg(-\frac12Q'-Q\p_\th\e A_{12}+\bar a_1Q-h_1(Q,r)\bigg)(\p_r\h\psi)^2-\bigg(\frac12\p_{r}(Q\e A_{22})+h_2(Q,r)\bigg)(\p_\th \h\psi)^2\\
&\qquad\qquad +\bigg(\frac{5\vartheta_{r_1}}{8}{\h\Psi}^2+\frac{7}{8}(\p_{r}\h{\Psi})^2
+\frac{7}{8r^2}(\p_{\th}{\h\Psi})^2\bigg)\bigg)\mathrm{d}r\mathrm{d}\theta\\
& \quad
+\int_{0}^{2\pi}\bigg(\mu_0\bigg(\frac{Q}{2}\left((\p_r\h\psi)^2+(\p_\th\h\psi)^2\right)
\bigg)
(r_1,\th)
 -\frac{1}{2}\left(QF_5^2\right)(r_0,\th)\bigg)\mathrm{d}\theta.
\end{aligned}
\end{equation}
Therefore, we need to chose $ Q$ and $ r_1 $  such that for $ r\in[r_0,r_1] $, the function $ Q $ satisfies
\begin{equation}\label{3-14}
 \min_{[r_0,r_1]} Q(r)>0,
 \end{equation}
 \begin{equation}\label{3-15}
2\bigg( -\frac12Q'- Q\p_\th\e A_{12}+\bar a_1Q-h_1(Q,r)\bigg)\geq\lambda_0,
 \end{equation}
 and
 \begin{equation}\label{3-16}
 \frac{2}{\e A_{22}}\bigg(-\frac12\p_{r}(Q\e A_{22})-h_2(Q,r)\bigg)\geq\lambda_0
\end{equation}
    for some positive   constant $ \lambda_0 $.
    \par Set
\begin{equation*}
\vartheta_{1}=\min_{[r_0,r_0+R_0-\epsilon_0]}\bigg(\x b_4(r)-\frac1{2r^2}\bigg).
\end{equation*}
Then  for $ r_1\in(r_0,r_0+R_0-\epsilon_0]$, one has
\begin{equation}\label{mu_1}
\vartheta_{r_1}\geq \vartheta_{1}>0.
\end{equation}
By  \eqref{2-7-e} and \eqref{mu_1}, for  any $\delta\in(0,\delta_0]$, $r_1\in(r_0,r_0+R_0-\epsilon_0]$, it holds that
\begin{equation}\label{3-17}
\begin{cases}
\begin{aligned}
&2\bigg(-\frac12Q'- Q\p_\th\e A_{12}+\bar a_1Q-h_1(Q,r)\bigg)\\
&\geq -Q'+2(\bar a_1-\p_\th\e A_{12})Q-4Q^2\bigg(\x b_1^2+r^2\x b_2^2+\frac{\bar b_3^2}{\vartheta_{1}}\bigg)-\bigg(\x a_2^2+4\frac{\bar a_3^2}{\vartheta_{1}}\bigg)\\
&\frac{2}{\e A_{22}}\bigg(-\frac12\p_{r}(Q\e A_{22})-h_2(Q,r)\bigg)
\geq -Q'-2\frac{\p_r\e A_{22}}{2\e A_{22}}Q-\frac{Q^2}{\mu_0}-\frac{4\bar a_4^2}{\mu_0\vartheta_{1}}.
\end{aligned}
\end{cases}
\end{equation}
Define
\begin{equation*}
\begin{aligned}
&\mathfrak {a}_0:=
\max_{(r_0,r_0+R_0-\epsilon_0]}\bigg(\x a_2^2+4\frac{\bar a_3^2}{\vartheta_{1}}+\frac{4\bar a_4^2}{\mu_0\vartheta_{1}}\bigg),\quad
\mathfrak {a}_2:=\max_{(r_0,r_0+R_0-\epsilon_0]}\bigg(4\x b_1^2+4r^2\x b_2^2+4\frac{\bar b_3^2}{\vartheta_{1}}\bigg)+\frac{1}{\mu_0},\\
&\mathfrak {a}_{1,1}:=\min_{(r_0,r_0+R_0-\epsilon_0]}
\x a_1,\ \
\mathfrak {a}_{1,2}:=\min_{(r_0,r_0+R_0-\epsilon_0]}
\bigg(-\frac{\bar A_{22}'}{2\bar A_{22}}\bigg),\ \
\mathfrak {a}_1:=\min\{\mathfrak {a}_{1,1},\mathfrak {a}_{1,2}\}.
\end{aligned}
\end{equation*}
It follows from Proposition  \ref{pro1} that there exists a constant $C_b>0$ depending only on $(\gamma,b_0, \rho_0, U_{1,0},U_{2,0},$\\$ S_{0},E_{0},\epsilon_0)$ such that
\begin{equation*}
0<\mathfrak {a}_i\leq C_b, \ \ i=0,2, \ \ \hbox{and} \ \ \vert \mathfrak {a}_1\vert\leq C_b.
\end{equation*}
For any  $ (\e\psi,\e\Psi)\in \mj_{\delta, r_1}  $  with $(\delta,r_1)\in (0, \delta_0]\times (0, r_0+R_0-\epsilon_0]$, set
\begin{equation*}
\e{\mathfrak {a}}_{1,1}:=\min_{(r_0,r_0+R_0-\epsilon_0]}
(\bar a_1-\p_\th\e A_{12}),\ \
\e{\mathfrak {a}}_{1,2}:=\min_{(r_0,r_0+R_0-\epsilon_0]}
\bigg(-\frac{\p_r\e A_{22}}{2\e A_{22}}\bigg),\ \
\e{\mathfrak {a}}_1:=\min\{\e{\mathfrak {a}}_{1,1},\e{\mathfrak {a}}_{1,2}\}.
\end{equation*}
By Proposition  \ref{pro2} and Morrey's inequality, there exists a constant $\e C>0$ such that
\begin{equation}\label{3-18}
\e{\mathfrak {a}}_1\geq {\mathfrak {a}}_1-\e C\delta_0:=\h{\mathfrak {a}}_1,
\end{equation}
 where $\h{\mathfrak {a}}_1 $ is a constant  depending only on $(\gamma,b_0, \rho_0, U_{1,0},U_{2,0}, S_{0},E_{0},\epsilon_0)$.  It follows from \eqref{3-17} and \eqref{3-18}
  that the conditions \eqref{3-14}-\eqref{3-16} are satisfied as long as $ Q>0$ and solves the ODE
 \begin{equation}\label{ODE}
-Q'-\mathfrak {a}_2Q^2+2\h{\mathfrak {a}}_1Q-\mathfrak {a}_0=\lambda_0, \ \ r\in[r_0,r_1].
\end{equation}
Set $\h Q(r):=Q(r)-\frac{\h{\mathfrak {a}}_1}{{\mathfrak {a}}_2}$. Since ${\mathfrak {a}}_2>0$, the equation \eqref{ODE} can be rewritten  as
\begin{equation}\label{ODE1}
-\frac{\h Q^{\prime}}{\mathfrak {a}_2}=\h Q^2+\frac{1}{\mathfrak {a}_2}\bigg(\frac{\mathfrak {a}_0\mathfrak {a}_2-\h{\mathfrak {a}}_1^2}{\mathfrak {a}_2}+\lambda_0\bigg).
\end{equation}
By following the argument of Step 3 in the proof of \cite[Proposition 2.4]{BDXJ21}, one can divide two cases $ \mathfrak {a}_0\mathfrak {a}_2-\h{\mathfrak {a}}_1^2>0 $ and $ \mathfrak {a}_0\mathfrak {a}_2-\h{\mathfrak {a}}_1^2<0 $ to  solve
   \eqref{ODE1}. Furthermore,   there exists a  positive constant $\bar r_1\in(r_0,r_0+R_0-\epsilon_0]$  so that for $r_1\in(r_0,\bar r_1)$, one can find a  positive constant  $\lambda_0 $  depending only on $(\gamma,b_0, \rho_0, U_{1,0},U_{2,0}, S_{0}, E_{0},\epsilon_0)$   such  that
 the solution $ Q $ satisfies  the  conditions \eqref{3-14}-\eqref{3-16}.

\par With the aid of \eqref{2-7-e} and \eqref{3-13}-\eqref{3-16}, one derives
\begin{equation}\label{3-21}
\begin{aligned}
&I_1(\h\psi,\h\Psi, Q)-I_2(\h\psi,\h\Psi)\\
&\geq \iint_{\m}\bigg(\frac{\mu_0\lambda_0}2\left((\p_r\h\psi)^2+(\p_\th \h\psi)^2\right)+\bigg(\frac{5\vartheta_{r_1}}{8}{\h\Psi}^2+\frac{7}{8}(\p_{r}\h{\Psi})^2
+\frac{7}{8r^2}(\p_{\th}{\h\Psi})^2\bigg)\bigg)\mathrm{d}r\mathrm{d}\theta\\
& \quad
 -\int_{0}^{2\pi}\frac{ 1}{2}\left(QF_5^2\right)(r_0,\th)\mathrm{d}\theta.
\end{aligned}
\end{equation}
 Furthermore, \eqref{3-10} shows that
 \begin{equation}\label{3-22}
 \begin{aligned}
 &I_1(\h\psi,\h\Psi, Q)-I_2(\h\psi,\h\Psi)=\iint_{\m}\bigg( Q\e F_3\p_r\h\psi-\e F_4\h\Psi\bigg)\de r \de \th\\
 &\leq\iint_{\Omega}\bigg(\frac{\mu_0\lambda_0}{4}(\p_r\h\psi)^2
+\frac{\vert  Q\e F_3\vert^2}{\mu_0\lambda_0}
+\frac{\vartheta_{r_1}}{4}{\h\Psi}^2+\frac{\e F_4^2}{\vartheta_{r_1}}
\bigg)\mathrm{d}r\mathrm{d}\theta.
 \end{aligned}
 \end{equation}
 This, together with \eqref{3-21}, leads to
 \begin{equation}\label{3-23}
\begin{aligned}
&\iint_{\m}\bigg(\frac{\mu_0\lambda_0}4\left((\p_r\h\psi)^2+(\p_\th \h\psi)^2\right)+\bigg(\frac{\vartheta_{r_1}}{8}{\h\Psi}^2+\frac{7}{8}(\p_{r}\h{\Psi})^2
+\frac{7}{8r^2}(\p_{\th}{\h\Psi})^2\bigg)\bigg)\mathrm{d}r\mathrm{d}\theta\\
&\leq\iint_{\Omega}\bigg(
\frac{\vert Q\e F_3\vert^2}{\mu_0\lambda_0}
+\frac{\e F_4^2}{\vartheta_{r_1}}
\bigg)\mathrm{d}r\mathrm{d}\theta
+\int_{0}^{2\pi}\frac{1}{2}\left(QF_5^2\right)(r_0,\th)\mathrm{d}\theta.
\end{aligned}
\end{equation}
Then one can apply   the Poincar\'{e} inequality to  get
 \begin{equation*}
\|(\h\psi,\h\Psi)\|_{H^1(\m)}\leq C\left(\|\e F_3\|_{L^2(\m)}+\|\e F_4\|_{L^2(\m)}+\|F_5\|_{L^2(\mathbb{T}_{2\pi})}\right)
\end{equation*}
  for a constant   $C>0$ depending only on $(\gamma,b_0,   \rho_0, U_{1,0},U_{2,0},S_{0},E_{0},r_0,r_1,\epsilon_0)$.
\end{proof}
\par With the $ H^1 $ energy estimate,  one can  regard the system
 \eqref{3-10} as two individual
boundary value problems for $ \h\psi $ and $\h\Psi $ by leaving only the principal term on the left-hand side of
the equations to  derive the higher order derivative estimates for $ (\h\psi,\h\Psi) $.
\begin{lemma}\label{pro5}
  For fixed $\epsilon_0\in(0,R_0)$, let  $\bar r_1\in(r_0,r_0+R_0-\epsilon_0]$  be from  Lemma \ref{pro4}. There exist a positive constant $C$ depending only on $(\gamma,b_0,   \rho_0, U_{1,0},U_{2,0},S_{0},E_{0},r_0,r_1,\epsilon_0)$ and a sufficiently small constant  $ \delta_1\in(0,\frac{\delta_0}2]$
such that for  $ r_1\in (r_0,\bar r_1) $ and $\delta\leq \delta_1$, the classical   solution $ (\h\psi,\h\Psi) $ to  \eqref{3-10} satisfies
\begin{eqnarray}\label{H^4-1-1-1}
&&\Vert\h\psi\Vert_{H^4(\Omega)}
\leq C\left(\|\e F_3\|_{H^3(\m)}+\|\e F_4\|_{H^2(\m)}+\|F_5\|_{H^3(\mathbb{T}_{2\pi})}\right),\\\label{H^4-2-2}
&&\Vert\h\Psi\Vert_{H^4(\Omega)}
\leq C\left(\|\e F_3\|_{H^2(\m)}+\|\e F_4\|_{H^2(\m)}+\|F_5\|_{H^2(\mathbb{T}_{2\pi})}\right).
\end{eqnarray}
\end{lemma}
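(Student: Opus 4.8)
The plan is to build on the $H^1$ estimate of Lemma~\ref{pro4} and to regard \eqref{3-10} as two separate boundary value problems, retaining in each only its principal part on the left and moving everything else --- the lower order terms, the coupling, and the inhomogeneities $\e F_3,\e F_4,F_5$ --- to the right. For $\h\Psi$ the principal operator is the polar Laplacian, and the conditions $\p_r\h\Psi(r_0,\th)=0$ on $\Gamma_{en}$, $\h\Psi(r_1,\th)=0$ on $\Gamma_{ex}$, together with $2\pi$-periodicity in $\th$, form a mixed Neumann--Dirichlet problem on an annulus whose two boundary circles are smooth and disjoint; hence elliptic regularity up to the boundary gives, for every $k\ge 0$,
\[
\|\h\Psi\|_{H^{k+2}(\m)}\le C\big(\|\e F_4\|_{H^k(\m)}+\|\p_r\h\psi\|_{H^k(\m)}+\|\p_\th\h\psi\|_{H^k(\m)}+\|\h\Psi\|_{L^2(\m)}\big).
\]
For $\h\psi$ the principal operator $\p_r^2-\e A_{22}\p_\th^2+2\e A_{12}\p_{r\th}^2$ is strictly hyperbolic with $r$ as the evolution variable, since by \eqref{2-7-e} one has $\e A_{22}>0$, so $\xi_r^2+2\e A_{12}\xi_r\xi_\th-\e A_{22}\xi_\th^2$ splits into two distinct real linear factors; moreover \eqref{3-10} prescribes full Cauchy data $\h\psi(r_0,\th)=0$, $\p_r\h\psi(r_0,\th)=F_5$ at the entrance, with periodic lateral conditions. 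Thus the multiplier $Q(r)\p_r(\cdot)$ constructed in Lemma~\ref{pro4} is available for $\h\psi$ and for its $\th$-derivatives, with the $\h\Psi$-terms now treated as a known forcing.

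The core of the argument is to differentiate the whole system \eqref{3-10} in $\th$ up to three times. Since $\x a_i,\x b_i$ depend on $r$ only and $\th\in\mathbb{T}_{2\pi}$ is periodic, the boundary conditions transform cleanly into $\p_r\p_\th^j\h\psi(r_0,\th)=\p_\th^jF_5$, $\p_\th^j\h\psi(r_0,\th)=0$, $\p_r\p_\th^j\h\Psi(r_0,\th)=0$, $\p_\th^j\h\Psi(r_1,\th)=0$, and the only genuinely new terms are commutators of $\p_\th^j$ with $\e A_{12}\p_{r\th}^2$ and $\e A_{22}\p_\th^2$. Each such commutator contains at least one $\th$-derivative of $\e A_{12}$ or $\e A_{22}$; as $\x A_{12},\x A_{22}$ are $\th$-independent, \eqref{2-7-f} together with the Morrey embedding shows these $\th$-derivatives of $\e A_{i2}$ are $O(\delta)$ in the relevant norms, so for $\delta\le\delta_1$ small the commutator contributions are absorbed into the coercive left-hand side produced by the Lemma~\ref{pro4} energy inequality. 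Applying that inequality to the $\p_\th^j$-differentiated hyperbolic equation then bounds the tangential and mixed norms $\sum_{j\le m}\|\p_\th^j\h\psi\|_{H^1(\m)}$ by the matching $\th$-derivatives of $\e F_3$ and $F_5$, by $\|\h\Psi\|_{H^{m+1}(\m)}$ (the coupling), and by quantities already controlled at the previous stage; the remaining $r$-derivatives are recovered algebraically, the hyperbolic equation giving $\p_r^2\h\psi=\e A_{22}\p_\th^2\h\psi-2\e A_{12}\p_{r\th}^2\h\psi+\e F_3-(\text{first order terms, among them }\p_r\h\Psi)$ and the elliptic equation giving $\p_r^2\h\Psi=\e F_4-\frac1r\p_r\h\Psi-\frac1{r^2}\p_\th^2\h\Psi-\x a_3\p_r\h\psi-\x a_4\p_\th\h\psi+\x b_4\h\Psi$; differentiating these identities in $\th$ and in $r$ converts the controlled tangential and mixed norms into the full $H^k$ norms. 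No circularity arises, since the $\h\psi$-equation sees $\h\Psi$ only through first order $r$-derivatives and the $\h\Psi$-equation sees $\h\psi$ only through first order derivatives.

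One then runs the bootstrap in the order $H^1\Rightarrow\h\Psi\in H^2\Rightarrow\h\psi\in H^2\Rightarrow\h\Psi\in H^3\Rightarrow\h\psi\in H^3\Rightarrow\h\Psi\in H^4\Rightarrow\h\psi\in H^4$, noting at each arrow which source norm is consumed: the hyperbolic step needs $\e F_3\in H^{k-1}$, $F_5\in H^{k-1}$ and $\h\Psi\in H^k$ to produce $\h\psi\in H^k$, while the elliptic step needs only $\e F_4\in H^{k-2}$ and $\h\psi\in H^{k-1}$ to produce $\h\Psi\in H^k$. Hence the hyperbolic step loses one derivative, the elliptic step gains two, the loop is net contractive in regularity, and carrying the count through gives $\|\h\psi\|_{H^4(\m)}\le C(\|\e F_3\|_{H^3(\m)}+\|\e F_4\|_{H^2(\m)}+\|F_5\|_{H^3(\mathbb{T}_{2\pi})})$, whereas the last elliptic step feeds only on $\h\psi\in H^3$, so $\|\h\Psi\|_{H^4(\m)}\le C(\|\e F_3\|_{H^2(\m)}+\|\e F_4\|_{H^2(\m)}+\|F_5\|_{H^2(\mathbb{T}_{2\pi})})$; these are precisely \eqref{H^4-1-1-1} and \eqref{H^4-2-2}. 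The threshold $\delta_1\in(0,\delta_0/2]$ is fixed exactly so that all the commutator absorptions above are valid.

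The step I expect to be the main obstacle is closing this coupled bootstrap while keeping the commutators absorbable: tangential differentiation of the variable-coefficient principal part of $L_1$ generates terms a priori of the same order as the quantity being estimated, and it is only the strict positivity and the smallness of the perturbation of $\e A_{22}$ --- the same input that makes Lemma~\ref{pro4} work --- that simultaneously keeps the operator strictly hyperbolic and renders those terms $O(\delta)$. A secondary technical point is to verify that, after the relevant integrations by parts, only a fixed finite number of derivatives of $\e A_{i2},\x a_i,\x b_i$ ever act on $\h\psi$ and $\h\Psi$, so that via \eqref{2-7-b}, \eqref{2-7-f} and Morrey the final constant depends only on $(\gamma,b_0,\rho_0,U_{1,0},U_{2,0},S_0,E_0,r_0,r_1,\epsilon_0)$, despite the $C^\infty$ hypothesis on $\e A_{12},\e A_{22}$.
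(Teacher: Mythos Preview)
Your approach is correct and takes a genuinely different route from the paper's. Both begin with the $H^1$ estimate from Lemma~\ref{pro4} and the elliptic step for $\h\Psi$, but the strategies for raising the regularity of $\h\psi$ diverge. The paper sets $q=\p_r\h\psi$, differentiates the hyperbolic equation in $r$, multiplies by $\p_r q$, and integrates over the slab $\Omega_t=\{r_0<r<t\}$; this yields a differential inequality in $t$ to which Gronwall applies, after which $\p_\th^2\h\psi$ is recovered algebraically from the equation itself (see \eqref{3-32}). You instead differentiate in $\th$ and reapply the coupled multiplier estimate of Lemma~\ref{pro4} to $(\p_\th^j\h\psi,\p_\th^j\h\Psi)$, absorbing the commutators via their $O(\delta)$ smallness, and then recover $\p_r^2\h\psi$ from the equation. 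The paper's Gronwall route uses $\delta_1$ only to bound the coefficients uniformly (as in \eqref{2-7-f-s}), not to make anything small; your route uses the smallness of $\delta_1$ more essentially, namely to absorb the leading commutators. Conversely, your scheme has the conceptual advantage of reusing Lemma~\ref{pro4} verbatim at each stage, whereas the paper's $r$-differentiated energy argument is a separate calculation. One point to watch in your approach: at the $j=2,3$ tangential steps the subleading commutator terms such as $\p_\th^2\e A_{22}\cdot\p_\th^2\h\psi$ pair an $H^1$ coefficient with a factor that is only in $L^2$ from the previous step, so a straight $L^\infty\times L^2$ bound is unavailable; a Gagliardo--Nirenberg interpolation (e.g.\ $\|f\|_{L^4}^2\le C\|f\|_{L^2}\|f\|_{H^1}$ in 2D) together with Young's inequality is needed to place these in $L^2$ and still absorb. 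This is routine but should be stated.
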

\begin{proof}
The proof is divided into three steps.
\par { \bf Step 1. A priori $H^2$-estimate for $ \h\Psi $.}
\par
   $ \h\Psi $ can be regarded as a classical solution to the linear boundary value problem:
 \begin{equation}\label{3-24}
\begin{cases}
\left(\p_r^2+\frac 1 r\p_r+\frac{1}{r^2}\p_{\th}^2\right)\h\Psi-\bar b_4\h\Psi=\e F_4-\bar a_3\p_r\h\psi-\bar a_4\p_\th\h\psi,\ \ &{\rm{in}}\ \ \Omega,\\
\p_r \h\Psi(r_0,\th)=0,\ \ &{\rm{on}}\ \ \Gamma_{en},\\
\h\Psi(r_1,\th)=0, \ \ &{\rm{on}}\ \ \Gamma_{ex}.\\
\end{cases}
\end{equation}
Then we   apply \cite[Theorems 8.8 and 8.12]{GT98} to obtain
 \begin{equation*}
 \begin{aligned}
\Vert\h\Psi\Vert_{H^2(\Omega)}
\leq C\left(\Vert\e F_4\Vert_{L^2(\Omega)}+\Vert\p_r\h\psi\Vert_{L^2(\Omega)}+
\Vert\p_\th\h\psi\Vert_{L^2(\Omega)}
+\Vert\h\Psi\Vert_{L^2(\Omega)}\right).
\end{aligned}
\end{equation*}
Combining this estimate with \eqref{3-11} gives
\begin{equation}\label{3-25}
 \begin{aligned}
\Vert\h\Psi\Vert_{H^2(\Omega)}
\leq C\left(\|\e F_3\|_{L^2(\m)}+\|\e F_4\|_{L^2(\m)}+\| F_5\|_{L^2(\mathbb{T}_{2\pi})}\right),
\end{aligned}
\end{equation}
 where the constant $C>0$ depends only on $(\gamma,b_0, \rho_0, U_{1,0},U_{2,0},S_{0},E_{0},r_0,r_1,\epsilon_0)$.
 \par { \bf Step 2. A priori $H^2$-estimate for $\h\psi$.}
 \par
 The first equation in \eqref{3-10} can be rewritten as
  \begin{equation}\label{3-26}
\p_r^2 \h\psi- \e A_{22}\p_\theta^2\h\psi
+ 2\e A_{12}\p_{r\theta}^2\h\psi
 =-\bar a_1\p_r\h\psi
 -\bar a_2 \p_\th \h\psi  -\bar b_1\p_r\h\Psi
    -\bar b_2\p_\th\h\Psi-\bar b_3\h\Psi
 +\e F_3, \ \ {\rm{in}} \ \ \Omega.
\end{equation}
Let $ q=\p_r\h\psi $. Differentiating \eqref{3-26} with respect to $r$ yields
\begin{equation}\label{3-27}
\begin{aligned}
&\p_r^2 q-\e A_{22}\p_\theta^2q
+ 2\e A_{12}\p_{r\theta}^2q=
 \p_r\e A_{22}\p_\theta^2\h\psi-2\p_r\e A_{12}\p_{\theta}q-\x a_1\p_rq-\bar a_2 \p_\th q \\
 &
 -\x a_1'q-\x a_2'\p_\th \h\psi
 +\p_r(-\bar b_1\p_r\h\Psi
    -\bar b_2\p_\th\h\Psi-\bar b_3\h\Psi
 +\e F_3),\ \ {\rm{in}} \ \ \Omega.
 \end{aligned}
 \end{equation}
 Set
 $$ \Omega_t=\{(r,\theta):r_0<r<t,\, \theta\in \mathbb{T}_{2\pi}\}.$$  By  integrating by parts with  the  boundary conditions in \eqref{3-10}, there holds
 \begin{equation*}
\begin{aligned}
V(q)&=\iint_{\Omega_t}\bigg(\p_r^2 q-\e A_{22}\p_\theta^2q
+ 2\e A_{12}\p_{r\theta}^2q\bigg)\p_r q \de r \de \th\\
 &=\int_{0}^{2\pi}\bigg(\bigg(\frac{1}2(\p_r q)^2+\frac{\e A_{22}}{2}(\p_\th q)^2\bigg)(t,\th)
 -\bigg(\frac{1}2(\p_r q)^2+\frac{\e A_{22}}{2}(\p_\th q)^2\bigg)(r_0,\th)\bigg)\de \th\\
 &\quad+\iint_{\Omega_t}\bigg(-\p_\th\e A_{12}
 (\p_rq)^2+\p_\th\e A_{22}\p_\th q\p_r q-\frac{\p_r\e A_{22}}{2}(\p_\th q)^2\bigg)\de r \de \th.
 \end{aligned}
 \end{equation*}
One can fix a small constant $ \delta_1\in(0, \frac{\delta_0}2] $  so that if  $\delta \leq  \delta_1$,
 it follows from \eqref{2-7-f} that
 \begin{eqnarray}\label{2-7-f-s}
 \|( \e A_{12},\e A_{22})\|_{H^3(\m)}\leq C
\end{eqnarray}
for some constant $C>0$ depending only on  $ (\gamma,b_0, \rho_0, U_{1,0},U_{2,0},S_{0},E_{0},\epsilon_0)$. Combining  \eqref{2-7-e}, \eqref{2-7-f-s}, the Morrey inequality and the Cauchy inequality    yields that \begin{equation}\label{3-28}
\begin{aligned}
V(q)&\geq \int_{0}^{2\pi}\bigg(\frac{\mu_0}2\left((\p_r q)^2+(\p_\th q)^2\right)(t,\th)
 -\frac1{2\mu_0}\left((\p_r q)^2+(\p_\th q)^2\right)(r_0,\th)\bigg)\de \th\\
 &\quad -C\iint_{\Omega_t}\left((\p_r q)^2+(\p_\th q)^2\right)\de r \de \th.
 \end{aligned}
 \end{equation}  On the other hand,
 \begin{equation*}
\begin{aligned}
V(q)=&\iint_{\Omega_t}\bigg(\p_r^2 q-\e A_{22}\p_\theta^2q
+ 2\e A_{12}\p_{r\theta}^2q\bigg)\p_r q \de r \de \th\\
 &=\iint_{\Omega_t}\bigg( \p_r\e A_{22}\p_\theta^2\h\psi-2\p_r\e A_{12}\p_{\theta}q
 -\x a_1\p_rq-\bar a_2 \p_\th q-\x a_1'q-\x a_2'\p_\th \h\psi\\
 &\qquad\qquad +\p_r(-\bar b_1\p_r\h\Psi
    -\bar b_2\p_\th\h\Psi-\bar b_3\h\Psi
 +\e F_3)\bigg)\p_r q\de r \de \th.\\
 \end{aligned}
 \end{equation*}
 Then one can   use the Cauchy inequality  and combine \eqref{3-11},    \eqref{3-25},  and \eqref{2-7-f-s} to  obtain
 \begin{equation}
\label{3-29}
\begin{split}
&\int_{0}^{2\pi}\frac{\mu_0}2\bigg((\p_r q)^2+(\p_\th q)^2\bigg)(t,\th)\de \th
\leq
C\left(\|\e F_3\|_{H^1(\m)}+\|\e F_4\|_{L^2(\m)}+\|F_5\|_{L^2(\mathbb{T}_{2\pi})}\right)^2\\
&+\int_{0}^{2\pi}\frac1{\mu_0}\bigg((\p_r q)^2+(\p_\th q)^2\bigg)(r_0,\th)\de \th
+C\iint_{\Omega_t}\left((\p_r q)^2+(\p_\th q)^2+(\p_{\th}^2\h\psi)^2\right)\mathrm{d}r\mathrm{d}\theta.
\end{split}
\end{equation}
The estimate constant $C>0 $ in \eqref{3-28} and \eqref{3-29} depends only on $(\gamma,b_0,   \rho_0, U_{1,0},U_{2,0},S_{0},E_{0},r_0,r_1,\epsilon_0)$.
\par Owing to $\p_r\h\psi(r_0,\th)=F_5(\th)$, one gets
\begin{equation}\label{3-30}
 \int_{0}^{2\pi}(\p_\th q)^2(r_0,\th)\de \th= \int_{0}^{2\pi}( F_5')^2(\th)\de \th\leq C\| F_5\|_{H^1(\mathbb{T}_{2\pi})}^2.
 \end{equation}
 Furthermore, it follows from \eqref{3-26}  that
   \begin{equation*}
   \begin{aligned}
&\int_{0}^{2\pi}(\p_rq)^2
(r_0,\th) \de \th
 =\int_{0}^{2\pi}\bigg(\bigg(\e A_{22}\p_\theta^2\h\psi
- 2\e A_{12}\p_{\theta}q-\bar a_1q
 -\bar a_2 \p_\th \h\psi  -\bar b_1\p_r\h\Psi\\
&\qquad\qquad\qquad\qquad\qquad\qquad    -\bar b_2\p_\th\h\Psi-\bar b_3\h\Psi
 +\e F_3\bigg)\p_rq\bigg)(r_0,\th) \de \th.
 \end{aligned}
\end{equation*}
Due to $ \h\psi(r_0,\th)=\p_r \h\Psi(r_0,\th)=0$, one can use  the Cauchy inequality and the trace theorem   together with \eqref{3-11},    \eqref{3-25} and  \eqref{3-30} to obtain  that
\begin{equation}\label{3-31}
  \int_{0}^{2\pi}(\p_r q)^2(r_0,\th)\de \th\leq \left(\|\e F_3\|_{H^1(\m)}+\|\e F_4\|_{L^2(\m)}+\|F_5\|_{H^1(\mathbb{T}_{2\pi})}\right)^2.
 \end{equation}
 Note that
 \begin{equation*}
   \begin{aligned}
\iint_{\Omega_t}\e A_{22}(\p_\theta^2\h\psi)^2
\de r \de \th&=
 \iint_{\Omega_t}\bigg(\p_rq
-2\e A_{12}\p_{\theta}q
 +\bar a_1\p_rq
 +\bar a_2 \p_\th \h\psi  +\bar b_1\p_r\h\Psi
    +\bar b_2\p_\th\h\Psi+\bar b_3\h\Psi
 -\e F_3\bigg)\p_\theta^2\h\psi\de r \de \th.
 \end{aligned}
\end{equation*}
 By using the Cauchy inequality and combining  the estimates \eqref{2-7-e}, \eqref{3-11} and \eqref{2-7-f-s}, there holds
\begin{equation}\label{3-32}
\begin{aligned}
\iint_{\Omega_t}(\p_\theta^2\h\psi)^2
\de r \de \theta
\leq
C_1\bigg(\left(\|\e F_3\|_{L^2(\m)}+\|\e F_4\|_{L^2(\m)}+\|F_5\|_{L^2(\mathbb{T}_{2\pi})}\right)^2
+\iint_{\Omega_t}\left((\p_r q)^2+(\p_\th q)^2\right)\de r\mathrm{d}\theta\bigg).
\end{aligned}
\end{equation}
Collecting the estimates \eqref{3-29}-\eqref{3-32} leads to
\begin{equation}\label{3-33}
\begin{aligned}
 &\frac{\de\bigg(\iint_{\Omega_t}\left((\p_r q)^2+(\p_\th q)^2\right)\de r\mathrm{d}\theta\bigg)}
 {\de t} \\
 &\leq C_2\left(\|\e F_3\|_{H^1(\m)}+\|\e F_4\|_{L^2(\m)}+\| F_5\|_{H^1(\mathbb{T}_{2\pi})}\right)^2+C_3\iint_{\Omega_t}\bigg((\p_r q)^2+(\p_\th q)^2\bigg)\de r\mathrm{d}\theta.
 \end{aligned}
\end{equation}
By the Gronwall inequality, one derives
\begin{equation}\label{3-34}
\begin{aligned}
 \iint_{\Omega}\left((\p_r q)^2+(\p_\th q)^2\right)\de r\mathrm{d}\theta
 \leq C_4\left(\|\e F_3\|_{H^1(\m)}+\|\e F_4\|_{L^2(\m)}+\| F_5\|_{H^1(\mathbb{T}_{2\pi})}\right)^2.
 \end{aligned}
\end{equation}
The estimate constants  $C_i>0 \ (i=1,2,3,4) $ depend only on $(\gamma,b_0, \rho_0, U_{1,0},U_{2,0},S_{0},E_{0},r_0,r_1,\epsilon_0)$. Therefore, \eqref{3-34}, together with \eqref{3-32}, yields that
\begin{equation}\label{H2-estimate-vm-final}
  \|\h\psi\|_{H^2(\m)}\le C\left(\|\e F_3\|_{H^1(\m)}+\|\e F_4\|_{L^2(\m)}+\| F_5\|_{H^1(\mathbb{T}_{2\pi})}\right)^2,
\end{equation}
where the constant $C>0 $ depends only on $(\gamma,b_0, \rho_0, U_{1,0},U_{2,0},S_{0},E_{0},r_0,r_1,\epsilon_0)$.
\par { \bf Step 3. Estimate for higher order weak derivatives of $(\h\psi,\hat\Psi)$.}
\par For the higher order weak derivatives of $(\h\psi,\hat\Psi)$, we continue the bootstrap argument. Adapting the argument  in Steps 1 and 2 and combining \eqref{3-25} and \eqref{H2-estimate-vm-final} yield
\begin{eqnarray}\label{H^4-1-1-1-1}
&&\Vert\h\psi\Vert_{H^4(\Omega)}
\leq C\left(\|\e F_3\|_{H^3(\m)}+\|\e F_4\|_{H^2(\m)}+\|F_5\|_{H^3(\mathbb{T}_{2\pi})}\right),\\\label{H^4-2-2-2}
&&\Vert\h\Psi\Vert_{H^4(\Omega)}
\leq C\left(\|\e F_3\|_{H^2(\m)}+\|\e F_4\|_{H^2(\m)}+\|F_5\|_{H^2(\mathbb{T}_{2\pi})}\right).
\end{eqnarray}
The above constant $ C>0 $   depends only on $(\gamma,b_0, \rho_0, U_{1,0},U_{2,0},S_{0},E_{0},r_0,r_1,\epsilon_0)$. Therefore,
the proof of Lemma \ref{pro5} is completed.
\end{proof}
\subsection{The well-posedness of the linearized  problem}\noindent
 \par With the aid of  Lemmas \ref{pro4} and \ref{pro5}, the following well-posedness of the linearized  problem \eqref{3-10} can be obtained.
\begin{proposition}\label{pro6}

  Fix  $\epsilon_0\in(0,R_0)$,   let  $\bar r_1\in(r_0,r_0+R_0-\epsilon_0]$ and $ \delta_1\in(0, \frac{\delta_0}2] $ be from Lemmas \ref{pro4} and \ref{pro5}, respectively.   For  $r_1\in(r_0,\bar r_1)$ and $\delta\leq \delta_1$,  the  linear boundary value problem \eqref{3-10}  associated with $(\e\psi,\e\Psi)\in \mj_{\delta,r_1} $  has a unique solution  $ (\h\psi,\h\Psi)\in \left(H^4(\m)\right)^2$ that satisfies
\begin{eqnarray}\label{H^4-1}
&&\Vert\h\psi\Vert_{H^4(\Omega)}
\leq C\left(\|\e F_3\|_{H^3(\m)}+\|\e F_4\|_{H^2(\m)}+\|F_5\|_{H^3(\mathbb{T}_{2\pi})}\right),\\\label{H^4-2}
&&\Vert\h\Psi\Vert_{H^4(\Omega)}
\leq C\left(\|\e F_3\|_{H^2(\m)}+\|\e F_4\|_{H^2(\m)}+\|F_5\|_{H^2(\mathbb{T}_{2\pi})}\right)
\end{eqnarray}
 for some  constant $C>0$ depending only on $(\gamma,b_0, \rho_0,U_{1,0},U_{2,0}, S_{0},E_{0},r_0,r_1,\epsilon_0)$.

\end{proposition}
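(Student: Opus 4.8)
The plan is to deduce the well-posedness of the linear boundary value problem \eqref{3-10} from the a priori estimates already established in Lemmas \ref{pro4} and \ref{pro5}, following the standard scheme for weakly coupled systems of hyperbolic and elliptic type. Since \eqref{3-10} is linear and the a priori bounds \eqref{H^4-1-1-1}--\eqref{H^4-2-2} control the full $H^4$ norm of any classical solution, the two estimates \eqref{H^4-1} and \eqref{H^4-2} are immediate once existence in $\left(H^4(\m)\right)^2$ is known; uniqueness is also immediate from \eqref{3-11} applied to the difference of two solutions (which solves \eqref{3-10} with $\e F_3=\e F_4=F_5=0$). So the entire content of the proof is the \emph{existence} of a solution with the stated regularity.

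First I would construct approximate solutions by the Galerkin method. Expanding in the trigonometric basis $\{e^{in\th}\}_{n\in\mathbb{Z}}$ in the periodic variable $\th$, one seeks $\h\psi_N(r,\th)=\sum_{|n|\le N}\psi_n(r)e^{in\th}$ and $\h\Psi_N(r,\th)=\sum_{|n|\le N}\Psi_n(r)e^{in\th}$, and projects the two equations of \eqref{3-10} onto this finite-dimensional space. Because the principal part of $L_1$ in the $r$-direction is $\p_r^2\h\psi$ (the coefficient of $\p_r^2\h\psi$ is $1$ and $\e A_{22}>0$ by \eqref{2-7-e}, so the operator is genuinely hyperbolic with $r$ playing the role of time), the projected system is, for each frequency block, a coupled linear ODE system in $r$ on $[r_0,r_1]$ with an initial-value (Cauchy) structure for $\h\psi$ at $r=r_0$ — $\p_r\h\psi$ and $\p_\th\h\psi$ prescribed there — and a mixed Neumann/Dirichlet two-point structure for $\h\Psi$ ($\p_r\h\Psi(r_0,\cdot)=0$, $\h\Psi(r_1,\cdot)=0$). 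I would solve the coupled finite-dimensional problem by a fixed-point/continuation argument: freeze $\h\Psi_N$, solve the Cauchy problem for $\h\psi_N$ by ODE theory, feed $\h\psi_N$ into the elliptic equation for $\h\Psi_N$ (a regular two-point boundary value problem, solvable since $\bar b_4>0$), and iterate; the contraction (for $r_1-r_0$ small, or via the energy bound) gives $(\h\psi_N,\h\Psi_N)$. Crucially, the energy identity used to prove Lemma \ref{pro4} — the multiplier $Q\p_r\h\psi$ for $L_1$ together with the multiplier $\h\Psi$ for $L_2$ — is inherited by the Galerkin projections because the multiplier $Q=Q(r)$ depends only on $r$ and the coefficients $\e A_{i2},\x a_j,\x b_j$ are the true ones (no truncation of coefficients is needed), so the projected solutions satisfy the uniform bound \eqref{3-11}. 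Likewise the higher-order bootstrap of Lemma \ref{pro5} applies uniformly in $N$.

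Then I would pass to the limit $N\to\infty$: the uniform $H^4$ bounds from Lemma \ref{pro5} give weak-$H^4$ convergence (and, by compact embedding, strong $H^3$ and $C^{1,\alpha}$ convergence) along a subsequence to a limit $(\h\psi,\h\Psi)\in\left(H^4(\m)\right)^2$; linearity lets one pass to the limit in every term of \eqref{3-10}, including the boundary conditions (the traces converge by the trace theorem together with $H^4$ bounds), so the limit is a solution. The a priori estimates \eqref{H^4-1}--\eqref{H^4-2} then follow by lower semicontinuity of the norm under weak convergence, or simply by noting that the limit is a classical ($H^4\subset C^{2,\alpha}$) solution to which Lemma \ref{pro5} directly applies. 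The main obstacle I anticipate is not any single step but the bookkeeping needed to ensure the Galerkin scheme respects the \emph{hyperbolic} character of $L_1$: one must set up the finite-dimensional problem as a Cauchy problem in $r$ for $\h\psi_N$ (not a two-point problem), verify that the coupling term $\bar b_1\p_r\h\Psi+\bar b_2\p_\th\h\Psi+\bar b_3\h\Psi$ does not destroy the well-posedness of that Cauchy problem at finite $N$ (it does not, since those are lower-order terms and $\h\Psi_N$ is already determined in the iteration), and confirm that the mixed boundary conditions for $\h\Psi_N$ are consistent with the two-point elliptic solvability at each frequency $n$ — which holds because $\bar b_4>0$ rules out the frequency-$0$ resonance. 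Once these structural points are in place, the limiting argument is routine and the proof closes.
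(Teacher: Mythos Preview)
Your overall strategy---Galerkin approximation in $\theta$, uniform a priori estimates from Lemmas \ref{pro4}--\ref{pro5}, weak limit---matches the paper's proof, and your key observation that the multiplier $Q=Q(r)$ depends only on $r$ (so that $Q\,\partial_r\h\psi_N$ stays in the Galerkin subspace and the energy identity is inherited by the projections) is exactly the mechanism the paper exploits. Two points, however, deserve attention.

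First, the paper inserts a mollification layer: since the a priori estimates in Lemmas \ref{pro4}--\ref{pro5} are stated for $(\e A_{12},\e A_{22})\in\left(C^\infty(\overline\Omega)\right)^2$, the coefficients and right-hand sides are replaced by smooth approximations $(\e A_{12}^\eta,\e A_{22}^\eta,\e F_3^\eta,\e F_4^\eta)$, the Galerkin scheme is run for each fixed $\eta$, and a second limit $\eta\to 0$ is taken at the end. You omit this, which is defensible in principle (the actual coefficients lie in $H^3\hookrightarrow C^{1,\alpha}$), but you must either verify that the higher-order bootstrap in Lemma \ref{pro5} goes through with $H^3$ coefficients or include the mollification.

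Second, and more substantively, your proposal to solve the finite-dimensional projected system by ``freeze $\h\Psi_N$, solve the Cauchy problem for $\h\psi_N$, feed into the elliptic problem, iterate'' is not justified: the energy bound gives uniqueness of the \emph{coupled} problem a posteriori, but it does not make your decoupling map a contraction, and invoking ``$r_1-r_0$ small'' risks imposing an extra smallness beyond what Lemma \ref{pro4} already requires. The paper bypasses this entirely. It writes the projected system as a first-order ODE in $r$ for the vector $\mathbf{Y}=(\my_j^\eta,(\my_j^\eta)',\ml_j^\eta,(\ml_j^\eta)')$, recasts the mixed boundary conditions (Cauchy data at $r_0$ for the $\h\psi$-block, two-point conditions for the $\h\Psi$-block) as an integral equation $(\mathbf{I}-\mathfrak{E})\mathbf{Y}=\text{RHS}$ with $\mathfrak{E}$ compact on $C^1([r_0,r_1])$, and applies the Fredholm alternative---the kernel is trivial precisely because any homogeneous solution would satisfy \eqref{3-11} with zero data. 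This is the clean route and needs no additional smallness; I would replace your iteration paragraph with this argument.
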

\begin{proof}
 Since the solution is periodic in $\th$, it is natural to use the Fourier series to construct the approximate solution to the linearized problem. Note that the $H^4(\Omega)$ energy estimate is obtained only for the linearized problem with smooth coefficients, one needs to mollify the coefficients in \eqref{3-10}.
 \par Note that $(\e A_{12},\e A_{22},\e F_3,\e F_4)\in \left(H^3(\m)\right)^3\times H^2(\m)$, there exists sequences of smooth functions
 $(\e A_{12}^\eta,\e A_{22}^\eta,\e F_3^\eta,\e F_4^\eta) $
 such that  as $ \eta\rightarrow0 $, one has
 \begin{equation*}
 \|\e A_{12}^\eta-\e A_{12}\|_{H^3(\m)}\rightarrow0, \ \  \|\e A_{22}^\eta-\e A_{22}\|_{H^3(\m)}\rightarrow0, \ \ \|\e F_3^\eta-\e F_3\|_{H^3(\m)}\rightarrow0, \ \
 \|\e F_4^\eta-\e F_4\|_{H^2(\m)}\rightarrow0.
 \end{equation*}
Then a  approximation problem with smooth coefficients of \eqref{3-10} is obtained as follows:
  \begin{equation}\label{3-35}
  \begin{cases}
  L_1^\eta(\h\psi,\h\Psi)=\p_r^2\h\psi
  -\e A_{22}^\eta\p_\th^2\h\psi +2\e A_{12}^\eta
   \p_{r\th}^2\h\psi
   +\bar a_1\p_r\h\psi
   +\bar a_2\p_\th\h\psi \\
   \qquad\qquad \ \ +\bar b_1\p_r\h\Psi
    +\bar b_2\p_\th\h\Psi+\bar b_3\h\Psi
    =\e F_3^\eta,\ \ &\text{in}\ \ \Omega,\\
 L_2(\h\psi,\h\Psi)=\left(\p_r^2+\frac 1 r\p_r+\frac{1}{r^2}\p_{\th}^2\right)\h\Psi+ \bar a_3\p_r\h\psi+\bar a_4\p_\th\h\psi-\bar b_4\h\Psi =\e F_4^\eta,\ \ &\text{in}\ \ \Omega,\\
\p_r\h\psi(r_0,\th)=F_5(\th),  \ \  \h\psi(r_0,\th)=
 \p_r\h\Psi(r_0,\th)=0, \ \ &\text{on}\ \ \Gamma_{en},\\ \h\Psi(r_1,\th)=0,\ \ &\text{on}\ \ \Gamma_{ex}.\\
\end{cases}
\end{equation}
Choose the standard orthonormal basis $\{\beta_j(\th)\}_{j=1}^{\infty}$ of $L^2(\mathbb{T}_{2\pi})$, where for each positive integer $k\in\mathbb{N}$:
\begin{equation*}
\beta_0(\th)=\frac{1}{\sqrt{2\pi}},\ \ \beta_{2k-1}(\th)=\frac{1}{\sqrt{\pi}}\sin (k\th),\ \ \beta_{2k}(\th)= \frac{1}{\sqrt{\pi}}\cos (k \th), \ \ k=1,2,\cdots.
\end{equation*}
For any fixed $\eta>0$, we consider the linear boundary value problem \eqref{3-35}.
 For each $m=1,2,\cdots$, let $(\h\psi_m^\eta, \h\Psi_m^\eta)$ be of the form
\begin{equation}\label{m}
\h\psi_m^\eta(r,\theta)=\sum_{j=0}^{2m}\my_j^\eta(r)\beta_j(\theta),\quad \h\Psi_m^\eta(r,\theta)=\sum_{j=0}^{2m}\ml_j^\eta(r)\beta_j(\theta), \ \  (r,\th)\in \m.
\end{equation}
 Then we find a solution $(\h\psi_m^\eta,\h\Psi_m^\eta)$ satisfying
\begin{equation}\label{Leqs4}
\begin{cases}
\begin{aligned}
&\int_0^{2\pi} {L}^{\eta}_1(\h\psi_m^\eta, \h\Psi_m^\eta)(r,\theta)\beta_{k}(\theta)\de \th = \int_0^{2\pi}\e F^{\eta}_3(r,\theta)\beta_{k}(\theta)\de \th ,  \ \ & r\in (r_0, r_1),\\
&\int_0^{2\pi} {L}_2(\h\psi_m^\eta, \h\Psi_m^\eta)(r,\theta)\beta_{k}(\theta)\de \th = \int_0^{2\pi}\e F^{\eta}_4(r,\theta)\beta_{k}(\theta)\de \th ,\ \   & r\in (r_0, r_1),\\
&\h\psi_m^\eta=\partial_r\h\Psi_m^\eta=0,  \
\partial_r\h\psi_m^\eta=\sum_{j=0}^{2m}\beta_j(\theta)\int_0^{2\pi} F_5(\theta)\beta_j(\theta)\de \th,  \ &\hbox{on} \ \ \Gamma_{en},\\
&\h\Psi_m^\eta=0,\ \  &\hbox{on}\ \ \Gamma_{ex},\\
\end{aligned}
\end{cases}
\end{equation}
for  all $k=0,1,...,2m$. If $\h\psi_m^\eta  $ and $\h\Psi_m^\eta$ are smooth functions and solve \eqref{Leqs4},  one obtains
\begin{equation*}
\begin{split}
&\iint_{\Omega}{L}^{\eta}_1(\h\psi_m^\eta,\h{\Psi}_m^\eta)Q\partial_r\h\psi_m^\eta
\mathrm{d}r\mathrm{d}\theta
-\iint_{\Omega}{L}_2(\h\psi_m^\eta,\h{\Psi}_m^\eta)\h\Psi_m^\eta\mathrm{d}r\mathrm{d}\theta\\
&=\int_{r_0}^{r_1}Q\int_0^{2\pi} \sum_{j=0}^{2m}(\my_j^\eta)'{L}^{\eta}_1(\h\psi_m^\eta, \h\Psi_m^\eta)\beta_{j}\de \th\mathrm{d}r
-\int_{r_0}^{r_1}\int_0^{2\pi}\sum_{j=0}^{2m}\ml_j^\eta
{L}_2(\h\psi_m^\eta, \h\Psi_m^\eta)\beta_{j}\de \th\mathrm{d}r\\
&=\iint_{\Omega}Q\e F^{\eta}_3\partial_r\h\psi_m^\eta\mathrm{d}r\de \th
-\iint_{\Omega}\e F^{\eta}_4\h\Psi_m^\eta\mathrm{d}r \de \th,
\end{split}
\end{equation*}
where   the function $ Q $  is constructed in Lemma \ref{pro4}. Then it holds that
\begin{equation}\label{H1}
\Vert(\h\psi_m^\eta,\h\Psi_m^\eta)\Vert_{H^1(\Omega)}
\leq C\left(\Vert\e F_3^\eta\Vert_{L^2(\Omega)}+\Vert\e F_4^\eta\Vert_{L^2(\Omega)}
+\Vert  F_5\Vert_{L^2(\mathbb{T}_{2\pi})}\right),
\end{equation}
where the constant $ C>0 $  is independent of $\eta$ and $m$.
\par Define
\begin{equation*}
\begin{aligned}
&\mathcal{S}_{1,jk}^\eta(r)= \int_0^{2\pi} \left(2\e A_{12}^{\eta}(r,\theta) \beta_j'(\theta) +\x a_1(r) \beta_j(\theta)\right)\beta_k(\theta) \de \th, \ \ \mathcal{S}_{3,jk}(r)=\int_0^{2\pi}\x b_1(r)\beta_j(\theta) \beta_k(\theta) \de \th,\\
&\mathcal{S}_{2,jk}^\eta(r)= \int_0^{2\pi} \left(\e A_{22}^{\eta}(r,\theta) \beta_j''(\theta)
+\x a_2(r)\beta_j'(\theta)\right) \beta_k(\theta) \de \th,\ \ \mathcal{S}_{4,jk}(r)=\int_0^{2\pi}\x b_2(r)\beta_j'(\theta) \beta_k(\theta) \de \th,\\
&  \mathcal{S}_{5,jk}=\int_0^{2\pi}\beta_j''(\theta) \beta_k (\theta)\de \th,\ \ \mathcal{S}_{6,jk}(r)=\int_0^{2\pi}\x a_3(r)\beta_j \beta_k \de \th,\ \
\mathcal{S}_{7,jk}(r)=\int_0^{2\pi}\x a_4(r)\beta_j'(\theta) \beta_k(\theta) \de \th,\\
&\e F^{\eta}_{3,k}(r)=\int_0^{2\pi}\e F^{\eta}_3(r,\theta)\beta_{k}(\theta)\de \th, \ \ \  \e F^{\eta}_{4,k}(r)=\int_0^{2\pi}\e F^{\eta}_4(r,\theta)\beta_{k}(\theta)\de \th.
\end{aligned}
\end{equation*}
Then the system \eqref{Leqs4} can be rewritten as
\begin{equation}\label{Leqs5}
\begin{cases}
\begin{aligned}
  &(\my_k^\eta)''(r)+\sum_{j=0}^{2m}\bigg(\mathcal{S}_{1,jk}^\eta(\my_j^\eta)'
  +\mathcal{S}_{2,jk}^\eta\my_j^\eta+\mathcal{S}_{3,jk}(\ml_j^\eta)'
  +\mathcal{S}_{4,jk}\ml_j^\eta\bigg)(r)\\
 &\quad  +\x b_3(r)\ml_k^\eta(r)=\e F^{\eta}_{3,k}(r),\ \ r\in (r_0, r_1),\\
&(\ml_k^\eta)''(r)+\frac1r(\ml_k^\eta)'(r)+\sum_{j=0}^{2m}\bigg(
\frac{\mathcal{S}_{5,jk}}{r^2}
\ml_j^\eta(r)+\left(\mathcal{S}_{6,jk}(\my_j^\eta)'+
\mathcal{S}_{7,jk}\my_j^\eta\right)(r)\bigg)\\
&\quad  -\x b_4(r)\my_k^\eta(r)=\e F^{\eta}_{4,k}(r),\ \ r\in (r_0, r_1),\\
&\my_k^\eta(r_0)=(\ml_k^\eta)'(r_0)=0,\ \
(\my_k^\eta)'(r_0)=\int_0^{2\pi} F_5(\th)\beta_k(\th)\de \th,\ \
\ml_k^\eta(r_1)=0.
 \end{aligned}
\end{cases}
\end{equation}
 For each $m\in\mathbb{N}$, set
\begin{equation*}
\begin{split}
\mathbf{Y}_1&:=(\my_{0}^\eta,...,\my_{2m}^\eta), \ \ \mathbf{Y}_2:=((\my_{0}^\eta)',...,(\my_{2m}^\eta)'),\\
\mathbf{Y}_3&:=(\ml_{0}^\eta,...,\ml_{2m}^\eta), \ \ \mathbf{Y}_4:=((\ml_{0}^\eta)',...,(\ml_{2m}^\eta)'),\\
\mathbf{Y}&:=(\mathbf{Y}_1,\mathbf{Y}_2,\mathbf{Y}_3, \mathbf{Y}_4)^{T},
\end{split}
\end{equation*}
and  define a projection mapping $\Pi_{i}\ (i=1,2)$ by
\begin{equation*}
\Pi_1(\mathbf{Y})=(\mathbf{Y}_1,\mathbf{Y}_2,\mathbf{0},\mathbf{Y}_4)^{T} \quad \hbox{and} \quad
\Pi_2(\mathbf{Y})=(\mathbf{0},\mathbf{0},\mathbf{Y}_3,\mathbf{0})^{T}.
\end{equation*}
Then \eqref{Leqs5} reduces to a first order ODE system:
\begin{equation}\label{3-36}
\begin{cases}
\begin{aligned}
&\mathbf{Y}^{\prime}=\mathbb{S}\mathbf{Y}+\mathbf{F},\\
&\Pi_1(\mathbf{Y})(r_0)=\bigg(\mathbf{0},\int_0^{2\pi} F_5(\th)\beta_0(\th)\de \th ,\cdots,\int_0^{2\pi} F_5(\th)\beta_{2m}(\th)\de \th, \mathbf{0},\mathbf{0}\bigg)^{T}:=\mathbf{F}_0,\\
 &\Pi_2(\mathbf{Y})(r_1)=\mathbf{0},
 \end{aligned}
\end{cases}
\end{equation}
 where $\mathbb{S}:(r_0,r_1)\rightarrow \mathbb{R}^{4(m+1)^2\times 4(m+1)^2}$ and $\mathbf{F}:(r_0,r_1)\rightarrow \mathbb{R}^{4(m+1)^2}$ are smooth functions with respect to the coefficients and right terms of \eqref{Leqs5}. Therefore, $\mathbf{Y}$ can be given by solving the following integral equations:
\begin{equation}\label{X}
\begin{cases}
\begin{aligned}
&\Pi_1(\mathbf{Y})(r)-\int_{r_0}^r\Pi_1\mathbb{S}(\mathbf{Y})(s)\mathrm{d}s
=\mathbf{F}_0+\int_{r_0}^r\Pi_1\mathbf{F}(s)\mathrm{d}s,\\
&\Pi_2(\mathbf{Y})(r)-\int_{r_1}^r\Pi_2\mathbb{S}(\mathbf{Y})(s)\mathrm{d}s
=\int_{r_1}^r\Pi_2\mathbf{F}(s)\mathrm{d}s.
\end{aligned}
\end{cases}
\end{equation}
 We define a linear operator $\mathfrak{E}:C^1([r_0,r_1];\mathbb{R}^{4(m+1)^2})\rightarrow C^1([r_0,r_1];\mathbb{R}^{4(m+1)^2})$ by
\begin{equation*}
\mathfrak{E}\mathbf{Y}(r):=\Pi_1\int_{r_0}^r\mathbb{S}(\mathbf{Y})(s)\mathrm{d}s
+\Pi_2\int_{r_1}^r\mathbb{S}(\mathbf{Y})(s)\mathrm{d}s.
\end{equation*}
 Note that $\mathbb{S}:(r_0,r_1)\rightarrow \mathbb{R}^{4(m+1)^2\times4(m+1)^2}$ is smooth. Then  there exists a constant
$C_0 > 0$ such that
\begin{equation*}
\|\mathfrak{E}\mathbf{Y}\|_{C^2([r_0, r_1])}\leq C_0\|\mathbf{Y}\|_{C^1([r_0, r_1])}, \ \ \rm{{for \ all}} \  \mathbf{Y}\in \mathfrak{E}.
\end{equation*}
Thus it follows  from the Arzel\'{a}-Ascoli theorem  that $\mathfrak{E}$ is compact.  Furthermore, one  can rewrite   \eqref{X} as
\begin{equation}\label{R}
(\mathbf{I}-\mathfrak{E})\mathbf{Y}(r)=\mathbf{F}_0+\Pi_1\int_{r_0}^r
\mathbf{F}(s)\mathrm{d}s
+\Pi_2\int_{r_1}^r\mathbf{F}(s)\mathrm{d}s.
\end{equation}
Suppose that $(\mathbf{I}-\mathfrak{E})\mathbf{Y}_*=0$. Then $ \mathbf{Y}_* $ solves \eqref{Leqs5}   with $(\mathbf{F},\mathbf{F}_0)=(\mathbf{0},\mathbf{0})$ and  the  corresponding $(\h\psi_m^\eta,\h\Psi_m^\eta)$ solves \eqref{Leqs4} with $\e F^\eta_3=\e F^\eta_4=F_5=0$. The estimate \eqref{H1} implies $\h\psi_m^\eta=\h\Psi_m^\eta=0$,  from which one gets $\mathbf{Y}_*=\mathbf{0}$ on $[r_0,r_1]$. Hence the  Fredholm alternative theorem  shows  that  \eqref{R} has a unique solution $\mathbf{Y}\in C^1([r_0,r_1];\mathbb{R}^{4(m+1)})$. Furthermore, the bootstrap argument for \eqref{X} gives the smoothness of the solution $\mathbf{Y}$ on $[r_0,r_1]$. Therefore, $\h\psi_m^\eta  $ and $\h\Psi_m^\eta$ are smooth functions and solve \eqref{Leqs4}. Then
the following higher order derivatives estimate can be derived similarly as in Lemma \ref{pro5}:
\begin{eqnarray}\label{H^4-1-1-a}
&&\Vert\h\psi_m^\eta\Vert_{H^4(\Omega)}
\leq C\left(\|\e F_3^\eta\|_{H^3(\m)}+\|\e F_4^\eta\|_{H^2(\m)}+\| F_5\|_{H^3(\mathbb{T}_{2\pi})}\right),\\\label{H^4-2-2-b}
&&\Vert\h\Psi_m^\eta\Vert_{H^4(\Omega)}
\leq C\left(\|\e F_3^\eta\|_{H^2(\m)}+\|\e F_4^\eta\|_{H^2(\m)}+\|F_5\|_{H^2(\mathbb{T}_{2\pi})}\right).
\end{eqnarray}
The above estimate constant $C>0$  is independent of $\eta$ and $m$.
\par For  any fixed $\eta>0$, let  $ (\h\psi_m^\eta,\h\Psi_m^\eta) $ given in the form \eqref{m} be the solution to the problem \eqref{Leqs4}. It follows from \eqref{H^4-1-1-a} and \eqref{H^4-2-2-b} that the sequence $\{(\h\psi_m^\eta,\h\Psi_m^\eta)\}_{m=1}^{\infty}$ is bounded in $[H^4(\Omega)]^2$. By  the weak compactness property of $H^4(\Omega)$,  there exists a subsequence $\{(\h\psi_{m_k}^\eta,\h\Psi_{m_k}^\eta)\}_{k=1}^{\infty}$ with  $ m_k\rightarrow \infty$ as $ k\rightarrow \infty$ converging weakly to  a limit $(\h\psi_*^\eta,\h\Psi_*^\eta)$  in $H^4(\Omega)$. Furthermore,  the Morrey inequality implies that the sequence $\{(\h\psi_m^\eta,\h\Psi_m^\eta)\}_{m\in\mathbb{N}}$ is bounded in $[C^{2,\frac{2}{3}}(\overline{\Omega})]^2$. Then it follows the Arzel$\mathrm{\grave{a}}$-Ascoli theorem that the subsequence $(\h\psi_{m_k}^\eta,\h\Psi_{m_k}^\eta)$   converges to $(\h\psi_*^\eta,\h\Psi_*^\eta) $  in $ [C^{2,\frac{1}{2}}(\overline{\Omega})]^2 $.
  This shows that $(\h\psi_*^\eta,\h\Psi_*^\eta)$ is a classical solution to \eqref{3-35}.
Then it follows from
 \eqref{H^4-1-1-a}-\eqref{H^4-2-2-b} and the $H^4$ convergence of $\{(\h\psi_{m_k}^\eta,\h\Psi_{m_k}^\eta)\}$ that one obtains
\begin{eqnarray}\label{*H^4_1}
&&\Vert\h\psi_*^\eta \Vert_{H^4(\Omega)}
\leq C\left(\Vert\e F_3^\eta\Vert_{H^3(\Omega)}+\Vert\e F_4^\eta\Vert_{H^2(\Omega)}
+\Vert F_5\Vert_{H^3(\mathbb{T}_{2\pi})}\right),\\
&&\Vert\h\Psi_*^\eta\Vert_{H^4(\Omega)}\label{*H^4_2}
\leq C\left(\Vert\e F_3^\eta\Vert_{H^2(\Omega)}+\Vert\e F_4^\eta\Vert_{H^2(\Omega)}
+\Vert F_5\Vert_{H^2(\mathbb{T}_{2\pi})}\right),
\end{eqnarray}
where the positive constant $ C$  is independent of $\eta$.
\par  For   any fixed $\eta>0$, let $(\h\psi_\ast^\eta,\h\Psi_\ast^\eta)\in[H^4(\Omega)]^2$ be the solution to  \eqref{3-35}. It follows from  \eqref{*H^4_1}-\eqref{*H^4_2} and the Morrey inequality that the sequence $\{(\h\psi_\ast^\eta,\h\Psi_\ast^\eta)\}_{n\in\mathbb{N}}$ is bounded in $[C^{2,\frac{2}{3}}(\overline{\Omega})]^2$. By adjusting the above limiting argument, we extract a subsequence $\{(\h\psi_\ast^{\eta_k},\h\Psi_\ast^{\eta_k})\}_{k=1}^{\infty}$ with $ \eta_k\rightarrow 0$ as $ k\rightarrow \infty$ such that its limit $(\h\psi_\ast,\h\Psi_\ast)\in[H^4(\Omega)\cap C^{2,\frac{1}{2}}(\overline{\Omega})]^2$ is a classical solution to  \eqref{3-10}  and satisfies
 \eqref{H^4-1}-\eqref{H^4-2}.
This completes the proof of Proposition \ref{pro6}.
\end{proof}
\par From Proposition \ref{pro6}, the well-posedness of \eqref{3-10-1} directly follows.
\begin{corollary}\label{cor1}
Fix  $\epsilon_0\in(0,R_0)$,   let  $\bar r_1\in(r_0,r_0+R_0-\epsilon_0]$ and $ \delta_1\in(0, \frac{\delta_0}2] $ be from Lemmas \ref{pro4} and  \ref{pro5}, respectively.    For  $r_1\in(r_0,\bar r_1)$ and $\delta\leq \delta_1$, let the iteration set $\mj_{\delta,r_1} $ be given by \eqref{3-9-s}. Then for each $(\e\psi,\e\Psi)\in \mj_{\delta,r_1} $, the  linear boundary value problem \eqref{3-10-1}    has a unique solution  $ (\psi,\Psi)\in \left(H^4(\m)\right)^2$ that satisfies
\begin{equation}\label{H^4-1-a-a}
\begin{aligned}
\Vert\psi\Vert_{H^4(\Omega)}
&\leq C\left(\|\e F_1\|_{H^3(\m)}+\|\e F_2\|_{H^2(\m)}+\|U_{1, en}-U_{1,0}\|_{C^3(\mathbb{T}_{2\pi})}\right.\\
&\qquad\left.+\|(U_{2, en},E_{ en},\Phi_{ ex})-(U_{2,0},E_0,\bar\Phi(r_1))\|_{C^4(\mathbb{T}_{2\pi})}\right),\\
\end{aligned}
\end{equation}
\begin{equation}\label{H^4-2-b-b}
\begin{aligned}
\Vert\Psi\Vert_{H^4(\Omega)}
&\leq C\left(\|\e F_1\|_{H^2(\m)}+\|\e F_2\|_{H^2(\m)}+\|U_{1, en}-U_{1,0}\|_{C^3(\mathbb{T}_{2\pi})}\right.\\
&\qquad\left.+\|(U_{2, en},E_{ en},\Phi_{ ex})-(U_{2,0},E_0,\bar\Phi(r_1))\|_{C^4(\mathbb{T}_{2\pi})}\right)
\end{aligned}
\end{equation}
 for some constant $C>0$ depending only on $(\gamma,b_0,   \rho_0, U_{1,0},U_{2,0},S_{0},E_{0},r_0,r_1,\epsilon_0)$.

\end{corollary}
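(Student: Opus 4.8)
The plan is to obtain Corollary \ref{cor1} directly from Proposition \ref{pro6} by undoing the affine changes of unknowns that transformed \eqref{3-10-1} into \eqref{3-10}. Recall that $\h\psi$ and $\h\Psi$ differ from $\psi$ and $\Psi$ by the explicit functions
\[
g_1(\th):=\int_0^{\th}\bigl(r_0(U_{2,en}(s)-U_{2,0})+d_0\bigr)\de s,\qquad
g_2(r,\th):=(r-r_1)(E_{en}(\th)-E_0)+\bigl(\Phi_{ex}(\th)-\bar\Phi(r_1)\bigr),
\]
namely $\psi=\h\psi+g_1$ and $\Psi=\h\Psi+g_2$. Since $g_1,g_2$ are fixed functions in $H^4(\m)$ (they depend only on $\th$, resp.\ are affine in $r$, and $U_{2,en},E_{en},\Phi_{ex}\in C^4(\mathbb{T}_{2\pi})$), the map $(\h\psi,\h\Psi)\mapsto(\h\psi+g_1,\h\Psi+g_2)$ is a bijection of $(H^4(\m))^2$ onto itself, and by the very derivation preceding \eqref{3-10} it carries solutions of \eqref{3-10} (with right-hand sides $\e F_3,\e F_4$ and boundary datum $F_5$) onto solutions of \eqref{3-10-1}, and conversely. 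Hence existence and uniqueness of a solution $(\psi,\Psi)\in(H^4(\m))^2$ to \eqref{3-10-1} follow at once from Proposition \ref{pro6}.

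For the quantitative bounds I would argue in three short steps. First, the triangle inequality gives $\|\psi\|_{H^4(\m)}\le\|\h\psi\|_{H^4(\m)}+\|g_1\|_{H^4(\m)}$ and $\|\Psi\|_{H^4(\m)}\le\|\h\Psi\|_{H^4(\m)}+\|g_2\|_{H^4(\m)}$, where $\|g_1\|_{H^4(\m)}\le C\|U_{2,en}-U_{2,0}\|_{C^4(\mathbb{T}_{2\pi})}$ (the $\th$-integration gains one derivative, and $d_0$ is controlled by the same norm) and $\|g_2\|_{H^4(\m)}\le C\|(E_{en},\Phi_{ex})-(E_0,\bar\Phi(r_1))\|_{C^4(\mathbb{T}_{2\pi})}$. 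Second, Proposition \ref{pro6} yields $\|\h\psi\|_{H^4(\m)}\le C(\|\e F_3\|_{H^3(\m)}+\|\e F_4\|_{H^2(\m)}+\|F_5\|_{H^3(\mathbb{T}_{2\pi})})$ and $\|\h\Psi\|_{H^4(\m)}\le C(\|\e F_3\|_{H^2(\m)}+\|\e F_4\|_{H^2(\m)}+\|F_5\|_{H^2(\mathbb{T}_{2\pi})})$. Third, I would convert $\e F_3,\e F_4,F_5$ back to $\e F_1,\e F_2$ and the boundary data: the definitions of $F_3,F_4$ express each as $F_1$ (resp.\ $F_2$) plus a finite sum of products of the coefficients $\e A_{i2},\bar a_j,\bar b_j$ with $U_{2,en}-U_{2,0}$, $E_{en}-E_0$, $\Phi_{ex}-\bar\Phi(r_1)$, $d_0$ and their derivatives. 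Since $\bar a_j,\bar b_j\in C^\infty([r_0,r_1])$ and $\|\e A_{i2}\|_{H^3(\m)}\le C$ by \eqref{2-7-b}--\eqref{2-7-f}, while $H^3(\m)$ is a Banach algebra on the two-dimensional domain $\m$, the same Moser-type product estimates used to establish \eqref{3-45-e} give
\[
\|\e F_3\|_{H^3(\m)}\le C\bigl(\|\e F_1\|_{H^3(\m)}+\|(U_{2,en},E_{en},\Phi_{ex})-(U_{2,0},E_0,\bar\Phi(r_1))\|_{C^4(\mathbb{T}_{2\pi})}\bigr),
\]
and likewise $\|\e F_4\|_{H^2(\m)}\le C(\|\e F_2\|_{H^2(\m)}+\|(U_{2,en},E_{en},\Phi_{ex})-(U_{2,0},E_0,\bar\Phi(r_1))\|_{C^4(\mathbb{T}_{2\pi})})$ together with the $H^2$-version of the first estimate; finally $\|F_5\|_{H^3(\mathbb{T}_{2\pi})}=\|U_{1,en}-U_{1,0}\|_{H^3(\mathbb{T}_{2\pi})}\le C\|U_{1,en}-U_{1,0}\|_{C^3(\mathbb{T}_{2\pi})}$. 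Substituting these into the second step and adding the $g_1,g_2$ contributions from the first step produces \eqref{H^4-1-a-a} and \eqref{H^4-2-b-b}.

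Since the transfer of existence and uniqueness is automatic and the estimate is a bookkeeping exercise, I do not anticipate a genuine obstacle here; the one point deserving care is to verify that every coefficient multiplying a boundary correction in the formulas for $F_3$ and $F_4$ actually lies in $H^3(\m)$, not merely $H^2(\m)$, so that the product estimates close at exactly the regularity levels $\|\e F_3\|_{H^3(\m)}$, $\|\e F_4\|_{H^2(\m)}$ demanded by Proposition \ref{pro6}. This is precisely where the uniform $H^3$ bound on $\e A_{i2},\bar a_j,\bar b_j$ furnished by Proposition \ref{pro2} is invoked, and with it in hand the corollary follows.
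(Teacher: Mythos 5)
Your proposal is correct and follows exactly the route the paper intends: the paper states the corollary ``directly follows'' from Proposition~\ref{pro6}, and your argument supplies precisely the implicit steps --- undoing the affine substitution $\psi=\h\psi+g_1$, $\Psi=\h\Psi+g_2$, invoking the $H^4$ bounds of Proposition~\ref{pro6}, and converting $\|\e F_3\|$, $\|\e F_4\|$, $\|F_5\|$ back to $\|\e F_1\|$, $\|\e F_2\|$ and the boundary data via the product estimate underlying~\eqref{3-45-e}.
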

\subsection{Proof of Theorem 1.3}\noindent
\par We are now ready to prove Theorem \ref{th1}.  Fix $\epsilon_0\in(0,R_0)$ and $ r_1\in (r_0,\bar r_1) $ for  $\bar r_1$ from Lemma \ref{pro4}.  Assume that
\begin{equation}\label{3-44}
 \delta\leq \delta_1,
 \end{equation}
 where $ \delta_1 $ is given by Lemma \ref{pro5}.
For any fixed  $ (\e\psi,\e\Psi)\in \mj_{\delta, r_1} $,  one can define a mapping
\begin{equation*}
\mt(\e\psi,\e\Psi)=(\psi,\Psi),
\end{equation*}
where $(\psi, \Psi)$ is the solution to the linear boundary value problem \eqref{3-10-1}. Then the estimates \eqref{H^4-1-a-a} and \eqref{H^4-2-b-b} in Corollary \ref{cor1} together with  \eqref{3-45-e} yields
  \begin{equation}\label{3-45-z-e-z}
 \Vert(\psi,\Psi)\Vert_{H^4(\Omega)}
\leq \mc_1^\flat \left(\delta^2+\omega_1(b,U_{ 1,en},U_{2, en},E_{ en},\Phi_{ ex})\right)
 \end{equation}
 for a constant $\mc_1^\flat>0 $  depending only on $(\gamma,b_0,   \rho_0, U_{1,0},U_{2,0},S_{0},E_{0},r_0,r_1,\epsilon_0)$.
 Set
 \begin{equation}\label{3-46}
  \sigma_1^\flat=\frac{1}{16((\mc_1^\flat)^2+\mc_1^\flat)}  \ \ {\rm{and}} \ \ \delta=4\mc_1^\flat\omega_1(b,U_{ 1,en},U_{2, en},E_{ en},\Phi_{ ex}).
\end{equation}
Then if \begin{equation}\label{3-47}
\omega_1(b,U_{ 1,en},U_{2, en},E_{ en},\Phi_{ ex})\leq \sigma_1^\flat,
\end{equation}
one can follow from \eqref{3-45-z-e-z} to obtain that
\begin{equation*}
\begin{aligned}
\Vert(\psi,\Psi)\Vert_{H^4(\Omega)}
\leq \mc_1^\flat\bigg(\delta^2+\omega_1(b,U_{ 1,en},U_{2, en},E_{ en},\Phi_{ ex})\bigg)
\leq \frac12\delta.
\end{aligned}
\end{equation*}
This implies that the mapping $\mt$ maps $\mj_{\delta, r_1}$ into itself.
\par It remains to show that the mapping $\mt$ is contractive in a low order norm. Let $ (\e\psi^{(i)},\e\Psi^{(i)})\in \mj_{\delta, r_1} $, $ i=1,2 $, one has $\mt(\e\psi^{(i)},\e\Psi^{(i)})=(\psi^{(i)},\Psi^{(i)}).$ Set
   \begin{equation*}
  (\e Y_1,\e Y_2) =(\e\psi^{(1)},\e\Psi^{(1)})-(\e\psi^{(2)},\e\Psi^{(2)}), \quad  ( Y_1, Y_2) =(\psi^{(1)},\Psi^{(1)})-(\psi^{(2)},\Psi^{(2)}).
  \end{equation*}
  Then it follows from \eqref{3-10-1} that $   ( Y_1, Y_2) $ satisfies
  \begin{equation}\label{3-48}
  \begin{cases}
  \p_r^2Y_1
  -A_{22}(r,\th,\e V_1^{(1)},\e V_2^{(1)},\e\Psi^{(1)})\p_\th^2Y_1 +2A_{12}(r,\th,\e V_1^{(1)},\e V_2^{(1)},\e\Psi^{(1)})
   \p_{r\th}^2Y_1 \\
  +\bar a_1(r)\p_rY_1
   +\bar a_2(r)\p_\th Y_1  +\bar b_1(r)\p_rY_2
    +\bar b_2(r)\p_\th Y_2+\bar b_3(r)Y_2\\
   =\mf_1\left(r,\th,\e V_1^{(1)},\e V_2^{(1)},\e\Psi^{(1)}, \n\e\Psi^{(1)},\e V_1^{(2)},\e V_2^{(2)},\e\Psi^{(2)}, \n\e\Psi^{(2)}\right),\ \ &\text{in}\ \ \Omega,\\
 \left(\p_r^2+\frac 1 r\p_r+\frac{1}{r^2}\p_{\th}^2\right)Y_2+ \bar a_3(r)\p_rY_1+\bar a_4(r)\p_\th Y_1-\bar b_4(r)Y_2\\
=\mf_2\left(r,\th,\e V_1^{(1)},\e V_2^{(1)}, \e\Psi^{(1)},\e V_1^{(2)},\e V_2^{(2)},\e\Psi^{(2)}\right),\ \ &\text{in}\ \ \Omega,\\
 Y_1(r_0,\th)=\p_rY_1(r_0,\th)=
 \p_rY_2(r_0,\th)=0, \ \ &\text{on}\ \ \Gamma_{en},\\
 Y_2(r_1,\th)=0,\ \ &\text{on}\ \ \Gamma_{ex},\\
\end{cases}
\end{equation}
where
 \begin{equation*}
 \begin{aligned}
 &\e V_1^{(i)}=\partial_r\e\psi^{(i)},\ \  \e V_2^{(i)}=\frac{\partial_\theta\e\psi^{(i)}-d_0}{r},\ i=1,2,\\
 &\mf_1\left(r,\th,\e V_1^{(1)},\e V_2^{(1)},\e\Psi^{(1)}, \n\e\Psi^{(1)},\e V_1^{(2)},\e V_2^{(2)},\e\Psi^{(2)}, \n\e\Psi^{(2)}\right)\\
 &=F_1(r,\th,\e V_1^{(1)},\e V_2^{(1)},\e\Psi^{(1)}, \n\e\Psi^{(1)})-F_1(r,\th,\e V_1^{(2)},\e V_2^{(2)},\e\Psi^{(2)}, \n\e\Psi^{(2)})\\
 &\quad+\bigg(A_{22}(r,\th,\e V_1^{(1)},\e V_2^{(1)},\e\Psi^{(1)})-A_{22}(r,\th,\e V_1^{(2)},\e V_2^{(2)},\e\Psi^{(2)})\bigg)\p_\th^2\psi^{(2)}\\
 &\quad-\bigg(2A_{12}(r,\th,\e V_1^{(1)},\e V_2^{(1)},\e\Psi^{(1)})-2A_{12}(r,\th,\e V_1^{(2)},\e V_2^{(2)},\e\Psi^{(2)})\bigg)\p_{r\th}^2\psi^{(2)},\\
 \end{aligned}
 \end{equation*}
 \begin{equation*}
 \begin{aligned}
 &\mf_2\left(r,\th,\e V_1^{(1)},\e V_2^{(1)},\e\Psi^{(1)}, \e V_1^{(2)},\e V_2^{(2)},\e\Psi^{(2)}\right)\\
 &=F_2(r,\th,\e V_1^{(1)},\e V_2^{(1)},\e\Psi^{(1)})-F_2(r,\th,\e V_1^{(2)},\e V_2^{(2)},\e\Psi^{(2)}).\\
 \end{aligned}
 \end{equation*}
 Since $(\e\psi^{(i)},\e\Psi^{(i)})$, $(\psi^{(i)},\Psi^{(i)}) \in \mj_{\delta, r_1}$ for $i= 1, 2$, the $H^1$ estimate in Lemma \ref{pro4} shows that
 \begin{equation}\label{3-48-1}
 \begin{aligned}
\|(Y_1,Y_2)\|_{H^1(\m)}
&\leq C\bigg(
 \| \mf_1(\cdot, \e V_1^{(1)},\e V_2^{(1)},\e\Psi^{(1)}, \n\e\Psi^{(1)},\e V_1^{(2)},\e V_2^{(2)},\e\Psi^{(2)}, \n\e\Psi^{(2)})\|_{L^2(\m)}\\
 &\qquad+\|\mf_2(\cdot,
 \e V_1^{(1)},\e V_2^{(1)},\e\Psi^{(1)}, \e V_1^{(2)},\e V_2^{(2)},\e\Psi^{(2)})\|_{L^2(\m)}\bigg)\\
 &\leq \mc_2^\flat\delta\|(\e Y_1,\e Y_2)\|_{H^1(\m)}
  \leq 4\mc_1^\flat\mc_2^\flat\omega_1(b,U_{ 1,en},U_{2, en},E_{ en},\Phi_{ ex}),
 \end{aligned}
\end{equation}
where  $\mc_2^\flat $ is a  positive constant depending only on $(\gamma,b_0,   \rho_0, U_{1,0},U_{2,0},S_{0},E_{0},r_0,r_1,\epsilon_0)$. Set
 \begin{equation}\label{3-49}
\sigma_2^\flat=\frac{1}{8\mc_1^\flat\mc_2^\flat}.
\end{equation}
Then if
$$ \omega_1(b,U_{ 1,en},U_{2, en},E_{ en},\Phi_{ ex})\leq \sigma_2^\flat,$$ $\mt$ is a contractive mapping in $H^1(\m)$ norm and there exists a unique $(\psi,\Psi) \in \mj_{\delta, r_1}$ such that $\mt (\psi,\Psi) = (\psi,\Psi)$.
\par Finally, we choose
\begin{equation*}
\sigma_1^\ast=\min\left\{\frac{\delta_1}{4\mc_1^\flat},\sigma_1^\flat,\sigma_2^\flat\right
\}.
\end{equation*}
Then if $ \omega_1(b,U_{ 1,en},U_{2, en},E_{ en},\Phi_{ ex})\leq \sigma_1^\ast$,  the nonlinear problem \eqref{3-7} has a unique solution $(V_1,V_2, \Psi)\in \left(H^3(\m)\right)^2\times H^4(\m)$  satisfying the estimate
\begin{equation}\label{3-49-1}
\Vert(V_1,V_2)\Vert_{H^3(\Omega)}+\Vert\Psi\Vert_{H^4(\Omega)}\leq \mc_3^\flat\omega_1(b,U_{ 1,en},U_{2, en},E_{ en},\Phi_{ ex}),
\end{equation}
where  $\mc_3^\flat>0 $ is a  constant  depending only on $(\gamma,b_0,   \rho_0, U_{1,0},U_{2,0},S_{0},E_{0},r_0,r_1,\epsilon_0)$. That is, the
background supersonic flow is structurally stable within  cylindrical   irrotational flows under perturbations
of  suitable boundary
conditions.
\section{The stability analysis within cylindrical  rotational flows}\label{rotation}\noindent
\par Now, we turn to the case of rotational flows and prove Theorem \ref{th2}. It is well-known that the steady Euler-Poisson system is hyperbolic-elliptic mixed in the supersonic region.
  The solvability of nonlinear boundary value problems for such mixed system is is extremely subtle and difficult. Some of the key difficulties lie in identifying suitable hyperbolic and elliptic modes with proper boundary conditions.   Here
  we  utilize the deformation-curl-Poisson decomposition developed in \cite{WS19}  to decompose the hyperbolic
and elliptic modes  in the system \eqref{1-2-c}.
  \par Using the Bernoulli's function,    the density  can be represented  as
\begin{equation}\label{5-3}
\rho=\mh(K,S,U_1,  U_2,\Phi)=
\bigg(\frac{\gamma-1}{\gamma e^S}\bigg(K+\Phi-\frac{1}{2}(U_1^2+U_2^2)\bigg)\bigg)
^{\frac{1}{\gamma-1}}.
\end{equation}
Substituting \eqref{5-3}   into the first equation in \eqref{1-2}, if a smooth flow does not contain the vacuum and the stagnation points, then the system \eqref{1-2} is equivalent to the following system:
\begin{equation}\label{5-5}
\begin{cases}
\begin{aligned}
&\bigg(c^2(K,U_1,  U_2,\Phi)-U_1^2\bigg)\p_r U_1+\bigg({c^2(K,U_1,  U_2,\Phi)-U_2^2}\bigg)\frac{\p_\theta U_2}{r}
+\frac {c^2(K,U_1,  U_2,\Phi)U_1}{r}\\
& -{U_1U_2}\bigg(\p_rU_2+\frac{\p_\theta U_1}{r}\bigg)+{\bigg(U_1\p_r+\frac{U_2}{r}\p_\theta\bigg)\Phi}
=0, \\
&\frac{U_2}{r}(\p_\th U_1-\p_r(rU_2))= \frac{e^S\mh^{\gamma-1}(K,S,U_1,  U_2,\Phi) }{\gamma-1}{\p_r S}-\p_r K,\\
&\bigg(\p_r^2+\frac 1 r\p_r+\frac{1}{r^2}\p_{\th}^2\bigg)\Phi=\mh(K,S,U_1,  U_2,\Phi)-b,\\
&\bigg(U_1\partial_r +\frac{U_2}{r}\partial_{\theta}\bigg) K=0,\\
&\bigg(U_1\partial_r +\frac{U_2}{r}\partial_{\theta}\bigg) S=0.\\
\end{aligned}
\end{cases}
\end{equation}
 Set
\begin{equation*}
\begin{aligned}
&(W_1,W_2,W_3)(r,\th)=(U_1,U_2,\Phi)(r,\th)-(\bar U_1,\bar U_2,\bar \Phi)(r), \ \  (r,\th)\in \Omega, \\
&(N_1,N_2)(r,\th)=(K,S)(r,\th)-( K_0,S_0), \qquad \qquad \qquad\ \ \ \ (r,\th)\in \Omega,\\
&{\bf W}=(W_1,W_2,W_3), \ \ \bar{\bf W}=(\bar U_1,\bar U_2,\bar \Phi), \ \ {\bf N}=(N_1,N_2) , \ \ {\bf N}_0=( K_0,S_0).\\
\end{aligned}
\end{equation*}
Then  one can use   ${\bf W}$ and $ {\bf N} $ to rewrite the system \eqref{5-5}  in  $ \Omega $  as follows
\begin{equation}\label{5-6}
  \begin{cases}
  \begin{aligned}
  &\p_rW_1
  -r B_{22}(r,\th,{\bf W},N_1)\p_\th W_2
   +r B_{12}(r,\th,{\bf W},N_1)\p_rW_2
   +B_{21}(r,\th,{\bf W},N_1)
  \p_{\th}W_1\\
 & +\bar a_1(r)W_1
  +r\e a_2(r)W_2
   +\bar b_1(r)\p_rW_3
    +\bar b_2(r)\p_\th W_3+\bar b_3(r)W_3
    =G_1(r,\th,{\bf W},{\bf N}),\\
    &\frac{1}{r}(\p_\th W_1-\p_r(rW_2))=G_2(r,\th,{\bf W},{\bf N}),\\
 &\left(\p_r^2+\frac 1 r\p_r+\frac{1}{r^2}\p_{\th}^2\right)W_3+ \bar a_3(r)W_1+r\bar a_4(r) W_2-\bar b_4(r)W_3
 =G_3(r,\th,{\bf W},{\bf N}),\\
 &\bigg((W_1+\x U_1)\partial_r +\frac{W_2+\x U_2}{r}\partial_{\theta}\bigg) N_1=0,\\
&\bigg((W_1+\x U_1)\partial_r +\frac{W_2+\x U_2}{r}\partial_{\theta}\bigg) N_2=0,
\end{aligned}
\end{cases}
\end{equation}
where
\begin{equation*}
  \begin{aligned}
 &B_{22}(r,\th,{\bf W},N_1)=\frac{(W_2+\bar U_2)^2-c^2(N_1+ K_0,{\bf W}+\bar{\bf W})}{r^2\left(c^2(N_1+ K_0,{\bf W}+\bar{\bf W})-(W_1+\bar U_1)^2\right)},\\
 & B_{12}(r,\th, {\bf W},N_1)= B_{21}(r,\th, {\bf W},N_1)=-\frac{(W_1+\bar U_1) (W_2+\bar U_2)}{r\left(c^2(N_1+ K_0,{\bf W}+\bar{\bf W})-(W_1+\bar U_1)^2\right)},\\
 & \bar a_1(r)=\frac1{\bar c^2-{\bar U_1^2}}\bigg(-(\gamma+1)\bar U_{1}\bar U_{1}'+\frac{\bar c^2}r-\bar U_{2}\bar U_{2}'-(\gamma-1)\frac{\bar U_{1}^2}r +\bar E\bigg)\\
 &\qquad=\frac1{\bar c^2-{\bar U_1^2}}\bigg(\frac{(\gamma+1)(1+\bar M_{2}^2)}{r(1-\bar M_{1}^2)}\bar U_{1}^2 +\frac{(\gamma+1)\bar E}{(1-\bar M_{1}^2)\bar c^2}\bar U_{1}^2+\frac{\bar c^2+\bar U_{2}^2}r -\frac{(\gamma-1)\bar U_{1}^2}r
+\bar E\bigg), \\
  \end{aligned}
\end{equation*}
\begin{equation*}
\begin{aligned}
 & \e a_2(r)=\frac1{\bar c^2-{\bar U_1^2}}\bigg(-(\gamma-1)\frac{\bar U_{2}\bar U_{1}'}{r}-\frac{{\bar U_{1}\bar U_{2}'}}{r}-(\gamma-1)\frac{\bar U_{1}\bar U_{2}}{r^2} \bigg)\\
  &\qquad =\frac1{\bar c^2-{\bar U_1^2}}\bigg(\frac{1-\bar M_{1}^2+(\gamma-1)(\bar M_{1}^2+\bar M_{2}^2)}{1-\bar M_{1}^2}+ \frac{(\gamma-1) r\bar E}{(1-\bar M_{1}^2)\bar c^2}\bigg)\frac{\bar U_{1}\bar U_{2}}{r^2},\\
 &
 \bar b_1(r)=\frac1{\bar c^2-{\bar U_1^2}}\bar U_{1}, \ \
     \bar b_2(r)=\frac1{\bar c^2-{\bar U_1^2}}\frac{\bar U_{2}}{r}, \ \ \bar b_3(r)=\frac1{\bar c^2-{\bar U_1^2}}\bigg((\gamma-1)\bar U_1'+(\gamma-1)\frac{\bar U_1}{r}\bigg),\\
      &\bar a_3(r)=\frac{\bar\rho \bar U_{1}}{\bar c^2}, \quad \quad  \bar a_4(r)=\frac{\bar\rho \bar U_{2}}{r\bar c^2}, \quad \quad   \bar  b_4(r)=\frac{\bar\rho}{ \bar c^2},\\
&G_1(r,\th,{\bf W},{\bf N})=\frac1{c^2(N_1+ K_0,{\bf W}+\bar{\bf W})-(\bar U_1+W_1)^2}\bigg(
    \bar U_1'\bigg(-(\gamma-1)N_1+\frac{\gamma+1}{2} W_1^2\\
    &\quad +\frac{\gamma-1}{2} W_2^2\bigg)+\bar U_{2}' W_1 W_2
  + \frac{\bar U_1}{r}\bigg(-(\gamma-1)N_1+\frac{\gamma-1}{2} (W_1^2+W_2^2)\\
  &\quad   + \frac{W_1}{r}\bigg((\gamma-1)(-N_1-W_3+\bar U_{1}W_1+ \bar U_{2} W_2)
   + \frac{\gamma-1}{2} (W_1^2+W_2^2)\bigg)  -W_1\p_rW_3-\frac{1}rW_2\p_\th W_3\bigg) \\
   & \quad -\bigg(\frac1{c^2(N_1+ K_0,{\bf W}+\bar{\bf W})-(W_1+\bar U_1)^2}-\frac1{\bar c^2-\bar U_1^2}\bigg)\bigg(\bigg(-(\gamma+1)\bar U_{1}\bar U_{1}'+\frac{\bar c^2}r\\
    &\quad -\bar U_{2}\bar U_{2}'-(\gamma-1)\frac{\bar U_{1}^2}r +\bar E\bigg)W_1+\bigg(-(\gamma-1)\bar U_{2}\bar U_{1}'-{\bar U_{1}\bar U_{2}'}-(\gamma-1)\frac{\bar U_{1}\bar U_{2}}{r} \bigg)W_2  \\
  &\quad+\bar U_{1}\p_r W_3
    +\frac{\bar U_{2}}{r}\p_\th W_3+\bigg((\gamma-1)\bar U_1'+(\gamma-1)\frac{\bar U_1}{r}\bigg)W_3\bigg), \\
    &G_2(r,\th,{\bf W},{\bf N})=\frac{1}{W_2+\bar U_2}\bigg(\frac{e^{(N_2+S_0)}\mh^{\gamma-1}({\bf N}+{\bf N}_0,{\bf W}+\bar{\bf W}) }{\gamma-1}{\p_r N_2}-\p_r N_1\bigg),\\
&G_3(r,\th,{\bf W},{\bf N})=\mh({\bf N}+{\bf N}_0,{\bf W}+\bar{\bf W})-\mh({\bf N}_0,\bar{\bf W}) +\bar a_3 W_1+ r\bar a_4W_2-\bar b_4W_3-(b-b_0).\\
  \end{aligned}
\end{equation*}
 Obviously, $(\x a_1,\x a_3,\x a_4,\bar b_1,\bar b_2,\bar b_3,\bar b_4)(r)$ are same as the one defined in Section \ref{irrotational}  and $\tilde{a}_2(r)= a_2(r)-(\bar U_{1}\bar U_{2})(r)$. Furthermore, the system \eqref{5-6} is supplemented with the following boundary
conditions:
\begin{equation}\label{5-7}
\begin{cases}
(W_1, W_2,\p_r W_3)(r_0,\th)=(U_{1,en}, U_{2,en},E_{en})(\th)-(U_{1,0},U_{2,0},E_0),\ \ &{\rm{on}}\ \ \Gamma_{en},\\
( N_1,N_2)(r_0,\th)=(  K_{en},S_{en})(\th)-(K_0,S_0),\ \ &{\rm{on}}\ \ \Gamma_{en},\\
W_3(r_1,\th)=\Phi_{ex}(\th)-\bar\Phi(r_1), \ \ &{\rm{on}}\ \ \Gamma_{ex}.\\
\end{cases}
\end{equation}
\par Note that $G_2({\bf W},{\bf N})$ only belongs to $H^2(\m)$ if one looks for the solution $({\bf W},{\bf N})$ in $\left(H^3(\m)\right)^2\times H^4(\m)\times \left(H^3(\m)\right)^2$.   The first two equations in \eqref{5-6} can be regarded as a first order system for $ (W_1, W_2)$,  the energy estimates obtained in the previous section for irrotational flows
indicate that the regularity of the solutions $W_1$ and $ W_2$ would be at best same as the source terms on the right hand sides in general. Hence  it seems that only $H^2(\m)$ regularity for $(W_1,W_2)$ is possible and there appears a loss of derivatives. To overcome this difficulty,  we use the continuity equation to introduce the
stream function which has the advantage of one order higher regularity than the velocity field. The
Bernoulli's quantity and the entropy can be represented as functions of the stream function. This
will enable us to overcome the possibility of losing derivatives. To achieve this, we will choose some
appropriate function spaces and design an elaborate two-layer iteration scheme to prove Theorem  \ref{th2}.
\subsection{The linearized problem }\noindent
\par  To solve the nonlinear boundary value problem \eqref{5-6} with \eqref{5-7} by the method of iteration, we define the following the iteration sets:
\begin{equation}\label{5-8}
\begin{aligned}
  \ma_{\delta_{e}, r_1}&=\bigg\{{\bf N}=(N_1,N_2)\in
  \left(H^4(\m)\right)^2: \|(N_1,N_2)\|_{H^4(\m)}\leq \delta_e\},\\
   \end{aligned}
  \end{equation}
  and
  \begin{equation}
  \label{5-9}
  \begin{aligned}
\ma_{\delta_v, r_1}&=\bigg\{{\bf W}=(W_1,W_2,W_3)\in \left(H^3(\m)\right)^2\times H^4(\m):\|(W_1,W_2)\|_{H^3(\m)}+\|W_3\|_{H^4(\m)}\leq \delta_v\bigg\},
\end{aligned}
\end{equation}
where the positive constants $\delta_e$, $\delta_v$ and $r_1$ will be determined later.
\par Next, for   fixed  $ \e {\bf N}\in \ma_{\delta_e, r_1} $ and for any function $\e{\bf W}\in \ma_{\delta_v, r_1} $, we first construct an
operator $ \mathfrak{F}_1^{\e {\bf N}}:\e{\bf W}\in \ma_{\delta_v, r_1}\mapsto{\bf W}\in \ma_{\delta_v, r_1}$  by resolving the following boundary value problem:
\begin{equation}\label{5-10}
\begin{cases}
\p_rW_1
  -r B_{22}(r,\th,\e{\bf W},\e N_1)\p_\th W_2
   +r B_{12}(r,\th,\e{\bf W},\e N_1)\p_rW_2
   +B_{21}(r,\th,\e{\bf W},N_1)
  \p_{\th}W_1\\
  +\bar a_1(r)W_1
  +r\e a_2(r)W_2
   +\bar b_1(r)\p_rW_3
    +\bar b_2(r)\p_\th W_3+\bar b_3(r)W_3
    =G_1(r,\th,\e{\bf W},\e{\bf N}), \ \ &{\rm{in}} \ \ \m,\\
\frac{1}{r}(\p_\th W_1-\p_r(rW_2))=G_2(r,\th,\e{\bf W},\e{\bf N}), \ \ &{\rm{in}} \ \ \m,\\
\left(\p_r^2+\frac 1 r\p_r+\frac{1}{r^2}\p_{\th}^2\right)W_3+ \bar a_3(r)W_1+r\bar a_4(r) W_2-\bar b_4(r)W_3
 =G_3(r,\th,\e{\bf W},\e{\bf N}), \ \ &{\rm{in}} \ \ \m,\\
(W_1, W_2,\p_r W_3)(r_0,\th)=(U_{1,en}, U_{2,en},E_{en})(\th)-(U_{1,0},U_{2,0},E_0),\ \ &{\rm{on}}\ \ \Gamma_{en},\\
W_3(r_1,\th)=\Phi_{ex}(\th)-\bar\Phi(r_1), \ \ &{\rm{on}}\ \ \Gamma_{ex}.
\end{cases}
\end{equation}
Then it is easy to verify  that
\begin{equation}\label{5-11}
\begin{cases}
\begin{aligned}
&\|G_1(\cdot,\e{\bf W},\e{\bf N})\|_{H^3(\m)}\leq C(\delta_e+\delta_v^2),\\
&\|G_2(\cdot,\e{\bf W},\e{\bf N})\|_{H^3(\m)}\leq C\delta_e,\\
&\|G_3(\cdot,\e{\bf W},\e{\bf N})\|_{H^2(\m)}\leq C(\delta_e+\delta_v^2),\\
\end{aligned}
\end{cases}
\end{equation}
where the constant $ C>0 $ depends only on $(\gamma,b_0, \rho_0,U_{1,0},U_{2,0}, S_{0},E_{0})$.
\par
In the following, we consider  the following problem:
\begin{align}\label{5-12}
\begin{cases}
(\p_r^2+\frac{1}{r}\p_r+ \frac{1}{r^2}\p_\th^2)\phi_1= G_2(r,\th,\e{\bf W},\e{\bf N}),\ \ &{\rm{in}} \ \ \m,\\
\phi_1(r_0,\theta)=0,\ \ &{\rm{on}}\ \ \Gamma_{en},\\
\phi_1(r_1,\theta)=0, \ \ &{\rm{on}}\ \ \Gamma_{ex}.\\
\end{cases}
\end{align}
The standard  elliptic theory in \cite{GT98} shows that  $ \phi_1\in H^{5}(\m) $ satisfies the estimate
\begin{equation}\label{5-13}
\|\phi_1\|_{H^5(\m)} \leq C\|G_2(\cdot,\e{\bf W},\e{\bf N})\|_{H^3(\m)}.
 \end{equation}
 Define
 \begin{equation*}
  \h W_1= W_1-\frac{1}{r}\p_{\theta}\phi_1, \ \  \h W_2= W_2+\p_{r}\phi_1, \  \ {\rm{in}} \ \ \m.\\
  \end{equation*}
  Then
  \eqref{5-10} becomes
  \begin{equation}\label{5-15}
\begin{cases}
\p_r\h W_1
  -r B_{22}(r,\th,\e{\bf W},\e N_1)\p_\th \h W_2
   +r B_{12}(r,\th,\e{\bf W},\e N_1)\p_r\h W_2
   +B_{21}(r,\th,\e{\bf W},\e N_1)
  \p_{\th}\h W_1\\
  +\bar a_1(r)\h W_1
  +r\e a_2(r)\h W_2
   +\bar b_1(r)\p_rW_3
    +\bar b_2(r)\p_\th W_3+\bar b_3(r)W_3
    =G_4(r,\th,\e{\bf W},\e{\bf N}), \ \ &{\rm{in}} \ \ \m,\\
\frac{1}{r}(\p_\th W_1-\p_r(rW_2))=0, \ \ &{\rm{in}} \ \ \m,\\
\left(\p_r^2+\frac 1 r\p_r+\frac{1}{r^2}\p_{\th}^2\right)W_3+ \bar a_3(r)\h W_1+r\bar a_4(r) \h W_2-\bar b_4(r)W_3
 =G_5(r,\th,\e{\bf W},\e{\bf N}), \ \ &{\rm{in}} \ \ \m,\\
 (\h W_1, \h W_2)(r_0,\th)=\bigg(U_{1,en}-\frac{1}{r}\p_{\theta}\phi_1, U_{2,en}+\p_{r}\phi_1\bigg)(r_0,\th)-(U_{1,0},U_{2,0}),\ \ &{\rm{on}}\ \ \Gamma_{en},\\
 \p_r W_3(r_0,\th)=E_{en}(\th)-E_0,\ \ &{\rm{on}}\ \ \Gamma_{en},\\
W_3(r_1,\th)=\Phi_{ex}(\th)-\bar\Phi(r_1), \ \ &{\rm{on}}\ \ \Gamma_{ex},
\end{cases}
\end{equation}
where
 \begin{equation*}
\begin{aligned}
&G_4(r,\th,\e{\bf W},\e{\bf N})=G_1(r,\th,\e{\bf W},\e{\bf N})-\p_r\bigg(\frac{1}{r}\p_{\theta}\phi_1\bigg)-r B_{22}(r,\th,\e{\bf W},\e N_1)\p_{r\th}^2\phi_1+r B_{12}(r,\th,\e{\bf W},\e N_1)\p_r^2\phi_1\\
&\quad-B_{21}(r,\th,\e{\bf W},\e N_1)
  \frac{\p_{\th}^2\phi_1}{r}-\bar a_1(r)\frac{\p_{\theta}\phi_1}{r}
  +r\e a_2(r)\p_{r}\phi_1,\\
  &G_5(r,\th,\e{\bf W},\e{\bf N})=G_3(r,\th,\e{\bf W},\e{\bf N})-\bar a_3(r)\p_r\bigg(\frac{1}{r}\p_{\theta}\phi_1\bigg)+r\bar a_4(r)\p_{r\th}^2\phi_1.\\
  \end{aligned}
\end{equation*}
Introduce the potential function
\begin{equation*}
\phi_2(r,\theta)= \int_{r_0}^r \h  W_1(s,\theta)\de s + \int_0^{\theta}\bigg( r_0 \h  W_2(r_0,s) +\tilde{d}_0\bigg)\de s, \ \  (r,\th)\in \Omega,
\end{equation*}
where $$ \e d_0=- \frac{r_0}{2\pi}\int_0^{2\pi}\bigg(U_{2,en}(s)+\p_{r}\phi_1(r_0,s)-U_{2,0}\bigg)\de s $$ is introduced to guarantee that $\phi_2(r,\theta)=\phi_2(r,\theta+2\pi)$. Then $\phi_2$ is periodic in $\theta$ with period $2\pi$ and satisfies
\begin{align}\label{5-16}
\partial_r\phi_2=\h W_1,\ \ \partial_\theta\phi_2=r \h  W_2+\e d_0, \ \ \phi_2(r_0,0)=0, \ \ \text{in}\ \ \Omega.
\end{align}
Substituting \eqref{5-16} into \eqref{5-15} yields  the following boundary value problem:
\begin{equation}\label{5-17}
\begin{cases}
\p_r^2\phi_2
  - B_{22}(r,\th,\e{\bf W},\e N_1)\p_\th^2 \phi_2
   +2B_{12}(r,\th,\e{\bf W},\e N_1)\p_{r\th }^2\phi_2-\frac{B_{12}(r,\th,\e{\bf W},\e N_1)}{r}\p_\th \phi_2\\
   +\bar a_1(r)\partial_r\phi_2
  +\e a_2(r)\partial_\th\phi_2
   +\bar b_1(r)\p_rW_3
    +\bar b_2(r)\p_\th W_3+\bar b_3(r)W_3\\
    =G_4(r,\th,\e{\bf W},\e{\bf N})-B_{12}(r,\th,\e{\bf W},\e N_1)\frac{\e d_0}{r}+\e a_2(r)\e d_0, \ \ &{\rm{in}} \ \ \m,\\
\left(\p_r^2+\frac 1 r\p_r+\frac{1}{r^2}\p_{\th}^2\right)W_3+ \bar a_3(r)\p_r\phi_2+\bar a_4(r)\p_\th \phi_2-\bar b_4(r)W_3\\
 =G_5(r,\th,\e{\bf W},\e{\bf N})+\bar a_4(r)\e d_0, \ \ &{\rm{in}} \ \ \m,\\
 \partial_r\phi_2(r_0,\th)=U_{1,en}(\th)-\frac{1}{r_0}\p_{\theta}\phi_1(r_0,\th)
 -U_{1,0}, \ \p_r W_3(r_0,\th)= E_{en}(\th)-E_0,\ \ &{\rm{on}}\ \ \Gamma_{en},\\
 \partial_\th\phi_2(r_0,\th)=r_0U_{2,en}(\th)+r_0\p_{r}\phi_1(r_0,\th)+\e d_0-U_{2,0},\ \ &{\rm{on}}\ \ \Gamma_{en},\\
W_3(r_1,\th)=\Phi_{ex}(\th)-\bar\Phi(r_1), \ \ &{\rm{on}}\ \ \Gamma_{ex}.
\end{cases}
\end{equation}

\par Set
\begin{equation*}
\begin{aligned}
&\psi(r,\th)=\phi_2(r,\th)-\int_{0}^{\th} \bigg(r_0U_{2,en}(s)+r_0\p_{r}\phi_1(r_0,s)+\e d_0-U_{2,0}\bigg)\de s, \ \  &(r,\th)\in \Omega, \\
& \Psi(r,\th)=W_3(r,\th)-(r-r_1)(E_{en}(\th)-E_0)-(\Phi_{ex}(\th)-\bar\Phi(r_1)), \ \  &(r,\th)\in \Omega.
\end{aligned}
\end{equation*}
Then \eqref{5-17} can be rewritten as
\begin{equation}\label{5-18}
\begin{cases}
\p_r^2\psi
  - B_{22}(r,\th,\e{\bf W},\e N_1)\p_\th^2 \psi
   +2B_{12}(r,\th,\e{\bf W},\e N_1)\p_{r\th }^2\psi-\frac{B_{12}(r,\th,\e{\bf W},\e N_1)}{r}\p_\th \psi\\
   +\bar a_1(r)\partial_r\psi
  +\e a_2(r)\partial_\th\psi
   +\bar b_1(r)\p_r\Psi
    +\bar b_2(r)\p_\th \Psi+\bar b_3(r)\Psi
    =G_6(r,\th,\e{\bf W},\e{\bf N}), \ \ &{\rm{in}} \ \ \m,\\
\left(\p_r^2+\frac 1 r\p_r+\frac{1}{r^2}\p_{\th}^2\right)\Psi+ \bar a_3(r)\p_r\psi+\bar a_4(r)\p_\th \psi-\bar b_4(r)\Psi
 =G_7(r,\th,\e{\bf W},\e{\bf N}), \ \ &{\rm{in}} \ \ \m,\\
 \partial_r\psi(r_0,\th)=G_8(\th), \ \ \psi(r_0,\th)= \Psi(r_0,\th)=0,\ \ &{\rm{on}}\ \ \Gamma_{en},\\
 \Psi(r_1,\th)=0, \ \ &{\rm{on}}\ \ \Gamma_{ex},
\end{cases}
\end{equation}
where
\begin{equation*}
\begin{aligned}
&G_6(r,\th,\e{\bf W},\e{\bf N})=G_4(r,\th,\e{\bf W},\e{\bf N})-B_{12}(r,\th,\e{\bf W},\e N_1)\frac{\e d_0}{r}
 +\e a_2(r)\e d_0
+\bigg(\frac{B_{12}(r,\th,\e{\bf W},\e N_1)}{r}\\
&\quad-\e a_2(r)\bigg)
\bigg(r_0(U_{2,en}+\p_{r}\psi_1)(r_0,\th)+\e d_0-U_{2,0}\bigg)-\bar b_1(r)(E_{en}(\th)-E_0)\\
&\quad-\bar b_2(r)\bigg((r-r_1)(E_{en}'(\th)+\Psi_{ex}'(\th)\bigg)
 -\x b_3(r)\bigg((r-r_1)(E_{en}(\th)-E_0)+(\Psi_{ex}(\th)-\bar\Psi(r_1))\bigg),\\
 &G_7(r,\th,\e{\bf W},\e{\bf N})=G_5(r,\th,\e{\bf W},\e{\bf N})+\bar a_4(r)\e d_0
-\bigg(\frac1r (E_{en}(\th)-E_0 ) +\frac{1}{r^2}((r-r_1)E_{en}''(\th)+\Psi_{ex}''(\th))\bigg)\\
&\quad -\x a_4(r)\bigg(r_0(U_{2,en}(\th)-U_{2,0})+d_0\bigg)
 +\x b_4(r)\bigg((r-r_1)(E_{en}(\th)-E_0)+(\Psi_{ex}(\th)-\bar\Psi(r_1))\bigg),  \\
  &G_8(\th)=U_{1,en}(\th)-\frac{1}{r_0}\p_{\theta}\phi_1(r_0,\th)-U_{1,0}.
\end{aligned}
\end{equation*}
It   can be directly checked that there exists a constant $C>0$ depending only on $(\gamma,b_0, \rho_0, U_{1,0},U_{2,0},$\\$S_{0},E_{0})$ such that
 \begin{equation}\label{5-19}
 \begin{aligned}
 &\|  G_6(\cdot,
 \e{\bf W},\e{\bf N})\|_{H^3(\m)}+\|  G_7(\cdot,
 \e{\bf W},\e{\bf N})\|_{H^2(\m)}+\|G_8\|_{H^3(\mathbb{T}_{2\pi})}\\
 &\leq
 C\left(\|  G_4(\cdot,
 \e{\bf W},\e{\bf N})\|_{H^3(\m)}+\|  G_5(\cdot,
 \e{\bf W},\e{\bf N})\|_{H^2(\m)}+\|G_8\|_{H^3(\mathbb{T}_{2\pi})}+\sigma_p\right)\\
 &\leq
 C\left(\|  G_1(\cdot,
 \e{\bf W},\e{\bf N})\|_{H^3(\m)}+\|  G_2(\cdot,
 \e{\bf W},\e{\bf N})\|_{H^3(\m)}+\|  G_3(\cdot,
 \e{\bf W},\e{\bf N})\|_{H^3(\m)}+\|G_8\|_{H^3(\mathbb{T}_{2\pi})}+\sigma_p\right).\\
 \end{aligned}
 \end{equation}

 \par For  fixed  $ \e {\bf N}\in \ma_{\delta_e, r_1} $ and  $\e{\bf W}\in \ma_{\delta_v, r_1} $, we denote
  \begin{equation*}
  \begin{aligned}
&\e B_{i2}(r,\th):=B_{i2}(r,\th,\e{\bf W},\e N_1),\quad \ \  (r,\th)\in \Omega,\ i=1,2,\\
&\e G_j(r,\th):=G_j (r,\th,\e{\bf W},\e N_1),\quad\ \ \    (r,\th)\in \Omega, \ j=6,7. \\
\end{aligned}
 \end{equation*}
  Then we have the following proposition.
  \begin{proposition}\label{pro7}
  For each $\epsilon_0\in(0, R_0)$ with $R_0$  given in  Proposition \ref{pro1},
there exists a positive constant $\delta_2$ depending only on  $(\gamma,b_0,   \rho_0, U_{1,0},U_{2,0},S_{0},E_{0},\epsilon_0)$ such that   for $r_1\in(r_0,r_0+R_0-\epsilon_0]$ and $\max\{\delta_e,\delta_v\}\leq \delta_2$, the coefficients $ ( \e B_{12},\e B_{22}) $ for $ (\e {\bf N},\e{\bf W})\in \ma_{\delta_e, r_1}\times \ma_{\delta_v, r_1}$  satisfying
\begin{eqnarray}\label{5-7-f}
 \|( \e B_{12},\e B_{22})-( \bar A_{12},\bar A_{22})\|_{H^3(\m)}\leq C(\delta_e+\delta_v),
\end{eqnarray}
 and
 \begin{equation}\label{5-7-e}
\mu_1\leq \e B_{22}(r,\th)\leq \frac{1}{\mu_1},
\   \ (r,\th)\in\overline{\m}.
\end{equation}
Here $(\bar A_{12},\bar A_{22}) $ is defined in Proposition \ref{pro7} and the positive constants  $C$ and $ \mu_1\in(0,1) $ depend only on $(\gamma,b_0, \rho_0, U_{1,0},U_{2,0},S_{0},E_{0},\epsilon_0)$.
\end{proposition}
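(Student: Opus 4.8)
The statement is a perturbation lemma of the same type as Proposition \ref{pro2} (ii), and I would prove it along the same lines. The key observation is that $\bar A_{i2}(r)$ is exactly $B_{i2}(r,\cdot,\cdot)$ evaluated at $({\bf W},N_1)=({\bf 0},0)$, so it suffices to control $B_{i2}$ quantitatively in a neighbourhood of the origin and to keep the denominator $r^2\bigl(c^2(N_1+K_0,{\bf W}+\bar{\bf W})-(W_1+\bar U_1)^2\bigr)$ uniformly away from zero. First I would record the Sobolev embeddings on the two-dimensional domain, $H^3(\m)\hookrightarrow C^{1,\alpha}(\overline{\m})$ and $H^4(\m)\hookrightarrow C^{2,\alpha}(\overline{\m})$ for $\alpha\in(0,1)$, with embedding constants that may be taken uniform in $r_1\in(r_0,r_0+R_0-\epsilon_0]$ (the annuli are nested and of comparable geometry, so one extends to a fixed larger annulus by a uniformly bounded extension operator). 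Hence for $(\e{\bf N},\e{\bf W})\in\ma_{\delta_e,r_1}\times\ma_{\delta_v,r_1}$ one has $\|\e{\bf W}\|_{C^{1,\alpha}(\overline{\m})}\le C\delta_v$ and $\|\e N_1\|_{C^{2,\alpha}(\overline{\m})}\le C\delta_e$, with $C$ depending only on $(\gamma,b_0,\rho_0,U_{1,0},U_{2,0},S_0,E_0,\epsilon_0)$.

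Next I would establish the non-degeneracy. By Proposition \ref{pro1} the background obeys $\bar U_1^2(r)<\bar c^2(r)<\bar U_2^2(r)$ on $[r_0,r_1]$, and since the ODE solution of Proposition \ref{pro1} is smooth and satisfies these strict inequalities up to $[r_0,r_0+R_0]$, there is $\kappa_0>0$, depending only on the listed parameters, with $\bar c^2(r)-\bar U_1^2(r)\ge 2\kappa_0$ on $[r_0,r_0+R_0-\epsilon_0]$. Writing $c^2(\e N_1+K_0,\e{\bf W}+\bar{\bf W})=(\gamma-1)\bigl(K_0+\bar\Phi+\e W_3+\e N_1-\tfrac12((\bar U_1+\e W_1)^2+(\bar U_2+\e W_2)^2)\bigr)$ and invoking the $L^\infty$-smallness from the previous paragraph, I would choose $\delta_2>0$ so that for $\max\{\delta_e,\delta_v\}\le\delta_2$,
\begin{equation*}
c^2(\e N_1+K_0,\e{\bf W}+\bar{\bf W})-(\e W_1+\bar U_1)^2\ge\kappa_0>0\quad\text{on }\overline{\m},
\end{equation*}
so that $\e B_{12},\e B_{22}$ are well defined; moreover the elementary quotient estimate gives $|\e B_{i2}-\bar A_{i2}|\le C(|\e{\bf W}|+|\e N_1|)\le C(\delta_e+\delta_v)$ pointwise. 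Combined with $\bar A_{22}(r)\in[\bar\mu_0,1/\bar\mu_0]$ from Proposition \ref{pro2} (i), shrinking $\delta_2$ once more yields \eqref{5-7-e}, e.g. with $\mu_1=\bar\mu_0/2$.

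For the Sobolev estimate \eqref{5-7-f} I would use that in two dimensions $H^3(\m)$ is a Banach algebra and embeds in $C^1(\overline{\m})$, together with the standard Moser composition estimate: if $F$ is $C^\infty$ on a neighbourhood of the compact set swept out by $(r,\e{\bf W}+\bar{\bf W},\e N_1+K_0)$, then $\|F(\cdot,\e{\bf W},\e N_1)-F(\cdot,{\bf 0},0)\|_{H^3(\m)}\le C\bigl(\|\e{\bf W}\|_{H^3(\m)}+\|\e N_1\|_{H^3(\m)}\bigr)$, with $C$ depending on $F$ and on the already bounded $L^\infty$-size of the arguments. The explicit $r$-dependence of $B_{i2}$ enters only through the factors $r^{-1},r^{-2}$ and the smooth background profiles $\bar U_1,\bar U_2,\bar\Phi$, which are $C^k([r_0,r_1])$-bounded uniformly by Proposition \ref{pro1} and contribute only bounded multipliers; for $B_{22}$ the composition is applied \emph{after} the non-degeneracy step, which makes $1/\bigl(c^2-(W_1+\bar U_1)^2\bigr)$ a smooth function of bounded arguments. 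Since $\|\e{\bf W}\|_{H^3(\m)}\le\delta_v$ and $\|\e N_1\|_{H^3(\m)}\le\delta_e$, this gives \eqref{5-7-f}.

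The argument is essentially routine; the only delicate points I anticipate are (a) establishing the lower bound of the non-degeneracy step before any differentiation, since the $H^3$-control of $B_{22}$ rests on the reciprocal $1/\bigl(c^2-(W_1+\bar U_1)^2\bigr)$ being a smooth function of bounded arguments, and (b) keeping every constant independent of $r_1\in(r_0,r_0+R_0-\epsilon_0]$, which is handled uniformly because the background ODE solution of Proposition \ref{pro1} is smooth and obeys all the relevant strict inequalities on the fixed interval $[r_0,r_0+R_0-\epsilon_0]$, and the embedding and extension constants for the nested annuli can be bounded by those of a single fixed larger annulus.
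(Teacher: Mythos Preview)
Your proposal is correct and is exactly the natural argument. The paper in fact gives no proof for this proposition (nor for the parallel Proposition~\ref{pro2}~(ii)), treating it as a routine consequence of the smoothness of $B_{i2}$ in its arguments, the strict inequalities \eqref{1-6} for the background, and the Sobolev embedding/algebra property of $H^3(\m)$ in two dimensions; your write-up supplies precisely those details.
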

 Next, by adapting  the same ideas in  Proposition \ref{pro6},  one can show the
 the following well-posedness of the linearized  problem \eqref{5-18}.
 \begin{proposition}\label{pro8}
 Fix  $\epsilon_0\in(0,R_0)$,  there exist a constant $\bar r_1\in(r_0,r_0+R_0-\epsilon_0]$ depending only $(\gamma,b_0,   \rho_0, U_{1,0},U_{2,0},S_{0},E_{0},\epsilon_0)$  and a sufficiently small constant  $ \delta_3\in(0,\frac{\delta_2}2]$ such that for  $r_1\in(r_0,\bar r_1)$ and $\max\{\delta_e,\delta_v\}\leq \delta_3$,  the  linear boundary value problem \eqref{5-18}  associated with $ (\e {\bf N},\e{\bf W})\in \ma_{\delta_e, r_1}\times \ma_{\delta_v, r_1}$ has a unique solution  $ (\psi,\Psi)\in \left(H^4(\m)\right)^2$ that satisfies
\begin{eqnarray}\label{H^5-1}
&&\Vert\psi\Vert_{H^4(\Omega)}
\leq C\left(\|\e G_6\|_{H^3(\m)}+\|\e G_7\|_{H^2(\m)}+\|G_8\|_{H^3(\mathbb{T}_{2\pi})}\right),\\\label{H^5-2}
&&\Vert\Psi\Vert_{H^4(\Omega)}
\leq C\left(\|\e G_6\|_{H^2(\m)}+\|\e G_7\|_{H^2(\m)}+\|G_8\|_{H^2(\mathbb{T}_{2\pi})}\right)
\end{eqnarray}
for  some constant $C>0$ depending only on $(\gamma,b_0,   \rho_0, U_{1,0},U_{2,0},S_{0},E_{0},r_0,r_1,\epsilon_0)$.

\end{proposition}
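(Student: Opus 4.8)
The plan is to follow the three-step route that established Proposition~\ref{pro6}: (i) a basic $H^1$ energy estimate obtained by the multiplier method; (ii) higher-order derivative estimates by a bootstrap argument treating the two equations separately; and (iii) construction of the solution by a Fourier--Galerkin scheme with mollified coefficients together with a double limiting argument. The essential observation is that the linearized system \eqref{5-18} differs from \eqref{3-10} only by the extra first-order term $-\frac{\e B_{12}}{r}\p_\th\psi$ and by the fact that the principal coefficients $\e B_{i2}$ and the right-hand sides now depend on the two-layer iteration data $(\e{\bf W},\e N_1)$ rather than on $(\e\psi,\e\Psi)$; by Proposition~\ref{pro7} these coefficients still satisfy uniform ellipticity \eqref{5-7-e}, the closeness \eqref{5-7-f} to $(\bar A_{12},\bar A_{22})$, and the same smooth approximability as in Proposition~\ref{pro2}.

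First I would redo the multiplier computation of Lemma~\ref{pro4}: multiply the hyperbolic equation in \eqref{5-18} by $Q(r)\p_r\psi$ and the elliptic equation by $\Psi$, integrate over $\m$, and subtract. Integration by parts reproduces the bulk terms of Lemma~\ref{pro4} plus the extra contribution $-\iint_\m \frac{\e B_{12}}{r}Q\,\p_\th\psi\,\p_r\psi\,\de r\de\th$, which is of the same order as the terms collected into $J_2$ there and is absorbed by Cauchy's inequality into $h_1(Q,r)(\p_r\psi)^2+h_2(Q,r)(\p_\th\psi)^2$. The $\Psi^2$ term is handled by the strict positivity of $\x b_4-\frac{1}{2r^2}$ (valid since $U_{2,0}^2<2r_0^2\rho_0$, cf. Proposition~\ref{pro1}), the boundary term at $r=r_1$ uses \eqref{5-7-e}, and one is reduced to the same Riccati-type ODE \eqref{ODE} for $Q$. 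Following the case analysis of \cite[Proposition~2.4]{BDXJ21} according to the sign of $\mathfrak{a}_0\mathfrak{a}_2-\h{\mathfrak{a}}_1^2$, I would produce $\bar r_1\in(r_0,r_0+R_0-\epsilon_0]$ and $\lambda_0>0$ so that for $r_1\in(r_0,\bar r_1)$ a positive $Q$ on $[r_0,r_1]$ exists, giving $\|(\psi,\Psi)\|_{H^1(\m)}\le C(\|\e G_6\|_{L^2(\m)}+\|\e G_7\|_{L^2(\m)}+\|G_8\|_{L^2(\mathbb{T}_{2\pi})})$.

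With this $H^1$ bound I would then regard \eqref{5-18} as two decoupled boundary value problems. The $\Psi$-equation is a uniformly elliptic boundary value problem, so \cite[Theorems 8.8 and 8.12]{GT98} and iteration give \eqref{H^5-2}. For $\psi$ I would differentiate the hyperbolic equation in $r$, set $q=\p_r\psi$, multiply by $\p_r q$, integrate over $\Omega_t=\{(r,\th):r_0<r<t,\ \th\in\mathbb{T}_{2\pi}\}$, and use \eqref{5-7-e}, \eqref{5-7-f}, Morrey's and Cauchy's inequalities exactly as in Lemma~\ref{pro5}, controlling the traces of $q$ at $r=r_0$ through the boundary conditions in \eqref{5-18} and the equation itself, and recovering $\iint_{\Omega_t}(\p_\th^2\psi)^2$ from the equation; a Gronwall inequality in $t$ then closes the argument and, after one further bootstrap round, yields \eqref{H^5-1}. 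Existence follows exactly as in Proposition~\ref{pro6}: mollify $(\e B_{12},\e B_{22},\e G_6,\e G_7)$, expand the Galerkin approximations in the Fourier basis $\{\beta_j\}$ of $L^2(\mathbb{T}_{2\pi})$, reduce to a first-order ODE system with split boundary data, invoke the Fredholm alternative (uniqueness from the $H^1$ estimate applied to zero data), and pass to the limit first in $m$ and then in the mollification parameter using the uniform $H^4$ bounds; uniqueness for \eqref{5-18} is immediate from Step~1.

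The hard part will be Step~1: checking that the additional term $-\frac{\e B_{12}}{r}\p_\th\psi$ and the dependence of the coefficients on $(\e{\bf W},\e N_1)$ do not obstruct the construction of the multiplier $Q$, i.e. that the constants $\mathfrak{a}_0,\h{\mathfrak{a}}_1,\mathfrak{a}_2$ and the threshold $\bar r_1$ can still be chosen to depend only on the background data and $\epsilon_0$. Once this is in place, Steps~2 and 3 are routine adaptations of Lemma~\ref{pro5} and Proposition~\ref{pro6}.
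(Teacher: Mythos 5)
Your proposal matches the paper's approach: the paper simply states that Proposition \ref{pro8} follows ``by adapting the same ideas in Proposition \ref{pro6}'', and your three-step outline (multiplier-based $H^1$ estimate, bootstrap for higher regularity, Galerkin with mollification and Fredholm alternative) is exactly that adaptation. You also correctly identify that the only new feature is the lower-order term $-\frac{\e B_{12}}{r}\p_\th\psi$ and the dependence of $\e B_{i2}$ on $(\e{\bf W},\e N_1)$, both of which are handled by Proposition \ref{pro7} and absorption into the $J_2$-type terms.
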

The well-posedness of \eqref{5-10} directly follows from Proposition \ref{pro8}.

\begin{corollary}\label{cor2}
Fix  $\epsilon_0\in(0,R_0)$,   let  $\bar r_1\in(r_0,r_0+R_0-\epsilon_0]$ and $ \delta_3\in(0, \frac{\delta_2}2] $ be from Proposition  \ref{pro8}.    For  $r_1\in(r_0,\bar r_1)$ and $\max\{\delta_e,\delta_v\}\leq \delta_3$, let the iteration sets $  \ma_{\delta_e, r_1}$ and $ \ma_{\delta_v, r_1}$  be given by \eqref{5-8} and \eqref{5-9}, respectively. Then for each $ (\e {\bf N},\e{\bf W})\in \ma_{\delta_e, r_1}\times \ma_{\delta_v, r_1}$, the  linear boundary value problem \eqref{5-10}    has a unique solution  $ (W_1,W_2,W_3)\in \left(H^3(\m)\right)^2\times H^4(\m)$ satisfying the estimate
\begin{eqnarray}\label{H^5-1-a-a}
\begin{aligned}
&\Vert(W_1,W_2)\Vert_{H^3(\Omega)}
+\Vert W_3\Vert_{H^4(\Omega)}\\
&\leq C\left(\|G_1(\cdot,\e{\bf W},\e{\bf N})\|_{H^3(\m)}+
\|G_2(\cdot,\e{\bf W},\e{\bf N})\|_{H^3(\m)}+\|G_3(\cdot,\e{\bf W},\e{\bf N})\|_{H^2(\m)}+\sigma_p\right),\\
\end{aligned}
\end{eqnarray}
where $C>0$  is a  constant depending only on $(\gamma,b_0, \rho_0,U_{1,0},U_{2,0}, S_{0},E_{0},r_0,r_1,\epsilon_0)$.

\end{corollary}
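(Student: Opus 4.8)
The plan is to deduce Corollary \ref{cor2} from Proposition \ref{pro8} by reversing, step by step, the sequence of changes of unknown that carried \eqref{5-10} into \eqref{5-18}. Fix $(\e{\bf N},\e{\bf W})\in\ma_{\delta_e,r_1}\times\ma_{\delta_v,r_1}$. First I would solve the Dirichlet problem \eqref{5-12} for $\phi_1$; the standard elliptic theory quoted there gives $\phi_1\in H^5(\m)$ with the estimate \eqref{5-13}, so $\phi_1$ is one derivative smoother than the forcing $G_2$. Passing to $\hat W_1=W_1-\frac1r\p_\th\phi_1$ and $\hat W_2=W_2+\p_r\phi_1$ transforms \eqref{5-10} into \eqref{5-15}; the role of $\phi_1$ is precisely that the curl equation for $(\hat W_1,\hat W_2)$ becomes homogeneous, which is what allows the introduction of the stream function $\phi_2$ with $\p_r\phi_2=\hat W_1$ and $\p_\th\phi_2=r\hat W_2+\tilde d_0$, the constant $\tilde d_0$ enforcing $2\pi$-periodicity in $\th$. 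One then checks that $\phi_2$ solves \eqref{5-17} and, after subtracting the explicit boundary lifts, that the pair $(\psi,\Psi)$ solves the linear hyperbolic--elliptic coupled system \eqref{5-18}.

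Next, since $r_1\in(r_0,\bar r_1)$ and $\max\{\delta_e,\delta_v\}\le\delta_3$, Proposition \ref{pro8} applies and yields a unique $(\psi,\Psi)\in(H^4(\m))^2$ obeying \eqref{H^5-1}--\eqref{H^5-2}. Reading the chain backwards produces the desired triple: set $W_3=\Psi+(r-r_1)(E_{en}-E_0)+(\Phi_{ex}-\bar\Phi(r_1))$, recover $\phi_2$ by re-adding to $\psi$ the $\th$-integral of the entrance data, put $\hat W_1=\p_r\phi_2$ and $\hat W_2=\frac{\p_\th\phi_2-\tilde d_0}{r}$, and finally $W_1=\hat W_1+\frac1r\p_\th\phi_1$ and $W_2=\hat W_2-\p_r\phi_1$. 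Since $(\hat W_1,\hat W_2)$ is a gradient field, $\p_\th\hat W_1-\p_r(r\hat W_2)\equiv0$, and therefore $\frac1r(\p_\th W_1-\p_r(rW_2))=\frac1r\big(\p_\th(\frac1r\p_\th\phi_1)+\p_r(r\p_r\phi_1)\big)=G_2$ by \eqref{5-12}; the first equation and the Poisson equation of \eqref{5-10} hold by construction, and the entrance and exit conditions of \eqref{5-10} are exactly those built into \eqref{5-12} and \eqref{5-18} together with the definitions of $\tilde d_0$ and the lifts. Uniqueness follows by running the \emph{same} reduction forward on the difference of two solutions of \eqref{5-10}: the difference has vanishing forcing and boundary data, so $\phi_1=0$, the associated $(\psi,\Psi)$ solves \eqref{5-18} with zero data, and Proposition \ref{pro8} forces $(\psi,\Psi)=0$, whence the difference vanishes.

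For the quantitative bound \eqref{H^5-1-a-a} I would chain \eqref{5-13}, the control of $G_4$ and $G_5$ by $G_1,G_2,G_3$ and $\phi_1$ (so that \eqref{5-19} bounds $\|\e G_6\|_{H^3(\m)}+\|\e G_7\|_{H^2(\m)}+\|G_8\|_{H^3(\mathbb{T}_{2\pi})}$ by $C(\|G_1\|_{H^3(\m)}+\|G_2\|_{H^3(\m)}+\|G_3\|_{H^2(\m)}+\sigma_p)$), and the estimates \eqref{H^5-1}--\eqref{H^5-2}. Since $\hat W_1$ and $\hat W_2$ are first derivatives of $\phi_2$, which equals $\psi$ plus a fixed $H^4(\m)$ lift of the entrance data, and since $\frac1r\p_\th\phi_1\in H^4(\m)$, one gets $\|(W_1,W_2)\|_{H^3(\m)}\le C(\|\psi\|_{H^4(\m)}+\|\phi_1\|_{H^5(\m)}+\sigma_p)$, while $\|W_3\|_{H^4(\m)}\le C(\|\Psi\|_{H^4(\m)}+\sigma_p)$; combining these inequalities yields \eqref{H^5-1-a-a} with a constant of the claimed dependence.

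The genuine analytic content --- the basic $H^1$ energy estimate via the multiplier $Q$ and the higher-order bootstrap --- is already packaged in Proposition \ref{pro8}, so this proof is essentially bookkeeping. The two points that really require care are: (i) the $\th$-periodicity and single-valuedness of $\phi_1$, $\phi_2$ and $\psi$, which is where $\tilde d_0$ enters and where the entrance data must be lifted consistently through the three successive changes of unknown; and (ii) the no-loss-of-derivatives claim, which forces $\phi_1$ to be taken in $H^5(\m)$, one order above $G_2$, so that the extra terms appearing in $G_4$ and $G_5$ do not degrade the $H^3(\m)$, respectively $H^2(\m)$, regularity inherited from $G_1$ and $G_3$.
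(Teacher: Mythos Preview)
Your proposal is correct and matches the paper's approach exactly: the paper simply states that ``the well-posedness of \eqref{5-10} directly follows from Proposition \ref{pro8}'' without further detail, and what you have written is precisely the bookkeeping that justifies this claim---reversing the chain of reductions \eqref{5-10} $\to$ \eqref{5-12}--\eqref{5-15} $\to$ \eqref{5-17} $\to$ \eqref{5-18} and propagating the estimates via \eqref{5-13} and \eqref{5-19}. Your attention to periodicity (the role of $\tilde d_0$) and to the $H^5$ regularity of $\phi_1$ needed to avoid derivative loss is on point.
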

\subsection{Proof of Theorem 1.5}\noindent
\par In this subsection, we establish   the existence and uniqueness of smooth supersonic spiral  flows with non-zero vorticity. The proof is divided into three steps.
\par { \bf Step 1. The  existence and uniqueness of a fixed point of $\mathfrak{F}_1^{\e {\bf N}}$.}
\par Fix $\epsilon_0\in(0,R_0)$ and  $ r_1\in (r_0,\bar r_1) $ for  $\bar r_1$ from Proposition  \ref{pro8}.  Assume that
\begin{equation}\label{5-44}
 \max\{\delta_e,\delta_v\}\leq \delta_3,
 \end{equation}
 where $ \delta_3 $ is given by  Proposition  \ref{pro8}. For any fixed $ \e {\bf N}\in \ma_{\delta_e, r_1}$, one can define a mapping
\begin{equation*}
\mathfrak{F}_1^{\e {\bf N}}(\e{\bf W})={\bf W}, \quad{\rm{ for \ each}} \ \ \e{\bf W}\in \ma_{\delta_v, r_1}.
\end{equation*}
 Then it follows from    \eqref{H^5-1-a-a} and \eqref{5-11}  that  $ (W_1,W_2,W_3)\in \left(H^3(\m)\right)^2\times H^4(\m)$ satisfying the estimate
\begin{equation}\label{6-1}
\begin{aligned}
\Vert(W_1,W_2)\Vert_{H^3(\m)}+\Vert W_3\Vert_{H^4(\m)}\leq
\mc_1^\sharp\left(\delta_e+(\delta_v)^2+\sigma_p\right),
\end{aligned}
\end{equation}
where  $\mc_1^\sharp$ is a  positive  constant depending only on $(\gamma,b_0, \rho_0,U_{1,0}, U_{2,0}, S_{0},E_{0},r_0,r_1,\epsilon_0)$.
\par Set
\begin{equation}\label{6-3}
 \delta_v=4\mc_1^\sharp(\delta_e+\sigma_p).
\end{equation}
Then if
\begin{equation}\label{6-4}
\delta_e+\sigma_p\leq \frac{1}{16(\mc_1^\sharp)^2},
\end{equation}
one can follow from \eqref{6-1} to obtain that
\begin{equation*}
\begin{aligned}
\Vert(W_1,W_2)\Vert_{H^3(\m)}+\Vert W_3\Vert_{H^4(\m)}\leq
\mc_1^\sharp\left(\delta_e+(\delta_v)^2+\sigma_p\right)\leq \frac12\delta_v.
\end{aligned}
\end{equation*}
This implies that the mapping $\mathfrak{F}_1^{\e {\bf N}}$ maps $\ma_{\delta_v, r_1}$ into itself.
 Next, for any fixed $ \e {\bf N}\in \ma_{\delta_e, r_1}$, we will show that $\mathfrak{F}_1^{\e {\bf N}}$  is a
a contraction mapping in a low order norm
\begin{equation*}
\|{\bf W}\|_\Sigma:=\Vert(W_1,W_2)\Vert_{L^2(\m)}+\Vert W_3\Vert_{H^1(\m)}
\end{equation*}
so that one can find a unique fixed point by the contraction mapping theorem. Let $ \e{\bf W}^{(i)}\in \ma_{\delta_v, r_1} $, $ i=1,2 $, one has $\mathfrak{F}_1^{\e {\bf N}}(\e{\bf W}^{(i)})={\bf W}^{(i)}.$ Set
   \begin{equation*}
  \e {\bf Z}=\e{\bf W}^{(1)}-\e{\bf W}^{(2)}, \quad  {\bf Z} ={\bf W}^{(1)}-{\bf W}^{(2)}.
  \end{equation*}
  Then $   {\bf Z} $ satisfies
  \begin{equation}\label{6-6}
\begin{cases}
\p_rZ_1
  -r B_{22}(r,\th,\e{\bf W}^{(1)},\e N_1)\p_\th Z_2
   +r B_{12}(r,\th,\e{\bf W}^{(1)},\e N_1)\p_rZ_2\\
   +B_{21}(r,\th,\e{\bf W}^{(1)},N_1)
  \p_{\th}Z_1
  +\bar a_1(r)Z_1
  +r\e a_2(r)Z_2
   +\bar b_1(r)\p_rZ_3\\
    +\bar b_2(r)\p_\th Z_3+\bar b_3(r)Z_3
    =\mg_1(r,\th,\e{\bf W}^{(1)},\e{\bf W}^{(2)},\e{\bf N}), \ \ &{\rm{in}} \ \ \m,\\
\frac{1}{r}(\p_\th Z_1-\p_r(rZ_2))
=\mg_2(r,\th,\e{\bf W}^{(1)},\e{\bf W}^{(2)},\e{\bf N}), \ \ &{\rm{in}} \ \ \m,\\
\left(\p_r^2+\frac 1 r\p_r+\frac{1}{r^2}\p_{\th}^2\right)Z_3+ \bar a_3(r)Z_1+r\bar a_4(r) Z_2-\bar b_4(r)Z_3\\
 =\mg_3(r,\th,\e{\bf W}^{(1)},\e{\bf W}^{(2)},\e{\bf N}), \ \ &{\rm{in}} \ \ \m,\\
Z_1(r_0,\th)= Z_2(r_0,\th)=\p_r Z_3(r_0,\th)=0,\ \ &{\rm{on}}\ \ \Gamma_{en},\\
Z_3(r_1,\th)=0, \ \ &{\rm{on}}\ \ \Gamma_{ex},
\end{cases}
\end{equation}
where
\begin{equation*}
\begin{aligned}
&\mg_1(r,\th,\e{\bf W}^{(1)},\e{\bf W}^{(2)},\e{\bf N})=\bigg(r B_{22}(r,\th,\e{\bf W}^{(1)},\e N_1)-r B_{22}(r,\th,\e{\bf W}^{(2)},\e N_1)\bigg)\p_\th \e W_2^{(2)}\\
&\ -\bigg(r B_{12}(r,\th,\e{\bf W}^{(1)},\e N_1)-r B_{12}(r,\th,\e{\bf W}^{(2)},\e N_1)\bigg)\p_r \e W_2^{(2)}
 -\bigg( B_{21}(r,\th,\e{\bf W}^{(1)},\e N_1)\\
&\ - B_{21}(r,\th,\e{\bf W}^{(2)},\e N_1)\bigg)\p_\th \e W_1^{(2)}
 +G_1(r,\th,\e{\bf W}^{(1)},\e{\bf N})-G_1(r,\th,\e{\bf W}^{(2)},\e{\bf N}),\\
 &\mg_2(r,\th,\e{\bf W}^{(1)},\e{\bf W}^{(2)},\e{\bf N})=G_2(r,\th,\e{\bf W}^{(1)},\e{\bf N})-G_2(r,\th,\e{\bf W}^{(2)},\e{\bf N}),\\
 &\mg_3(r,\th,\e{\bf W}^{(1)},\e{\bf W}^{(2)},\e{\bf N})=G_3(r,\th,\e{\bf W}^{(1)},\e{\bf N})-G_3(r,\th,\e{\bf W}^{(2)},\e{\bf N}).\\
\end{aligned}
\end{equation*}
\par  To simplicity the notation, denote
\begin{equation*}
\begin{aligned}
&\e B_{i2}^{(1)}(r,\th):=B_{i2}(r,\th,\e{\bf W}^{(1)},\e N_1),\ \ \ \ \  \ \ \ \   \ \ (r,\th)\in \m,\ \ i=1,2,\\
&\e\mg_j(r,\th):=\mg_j(r,\th,\e{\bf W}^{(1)},\e{\bf W}^{(2)},\e{\bf N}),\quad \ \ (r,\th)\in \m,\ \  j=1,2,3.\\
\end{aligned}
\end{equation*}
In order to estimate  $   {\bf Z} $, one can decompose $Z_1$ and $Z_2$ as
\begin{equation*}
\begin{aligned}
Z_1=\frac{1}{r}\p_{\theta} \phi_3+ \p_r \phi_4,\
 Z_2= -\p_r\phi_3) + \frac{1}{r}\p_{\theta}\phi_4-\frac{d_1}{r}, \
  d_1= -\frac{r_0}{2\pi}\int_0^{2\pi} \p_r\phi_3(r_1,\theta) \de\theta, \ \ {\rm{in}} \ \ \m.
\end{aligned}
\end{equation*}
where $\phi_3$ and $\phi_4$ solve the following boundary value problems respectively:\begin{align}\label{6-7}
\begin{cases}
(\p_r^2+\frac{1}{r}\p_r+ \frac{1}{r^2}\p_\th^2)\phi_3= \e \mg_2,\ \ &{\rm{in}} \ \ \m,\\
\phi_3(r_0,\theta)=0,\ \ &{\rm{on}}\ \ \Gamma_{en},\\
\phi_3(r_1,\theta)=0, \ \ &{\rm{on}}\ \ \Gamma_{ex},\\
\end{cases}
\end{align}
and
 \begin{equation}\label{6-8}
\begin{cases}
\p_r^2\phi_4
  - \e B_{22}^{(1)}\p_\th^2 \phi_4
   +2 \e B_{12}^{(1)}\p_{r\th }^2\phi_4-\frac{\e B_{12}^{(1)}}{r}\p_\th \phi_4
   +\bar a_1\partial_r\phi_4
  +\e a_2\partial_\th\phi_4
   +\bar b_1\p_rZ_3\\
    +\bar b_2\p_\th Z_3+\bar b_3Z_3
    =\e\mg_1-\p_r\bigg(\frac{1}{r}\p_{\theta}\phi_3\bigg)-r \e B_{22}^{(1)}\p_{r\th}^2\phi_3
    +r \e B_{12}^{(1)}\p_r^2\phi_3
    -\e B_{12}^{(1)}
  \frac{\p_{\th}^2\phi_3}{r}\\
  -\bar a_1\frac{\p_{\theta}\phi_3}{r}
  +r\e a_2\p_{r}\phi_3-\e B_{12}^{(1)}\p_{r}\phi_3-\e B_{12}^{(1)}\frac{ d_1}{r}+\e a_2 d_1, \ \ &{\rm{in}} \ \ \m,\\
\left(\p_r^2+\frac 1 r\p_r+\frac{1}{r^2}\p_{\th}^2\right)Z_3+ \bar a_3\p_r\phi_4+\bar a_4\p_\th \phi_4-\bar b_4Z_3=\e\mg_3-\bar a_3\frac{\p_{\theta} \phi_3}{r}+\bar a_4(r\p_r\phi_3+d_1), \ \ &{\rm{in}} \ \ \m,\\
 \partial_r\phi_4(r_0,\th)=\frac{1}{r_0}\p_{\theta}\phi_3(r_0,\th), \ \ \p_r Z_3(r_0,\th)=0,\
 \partial_\th\phi_4(r_0,\th)=r_0\p_{r}\phi_3(r_0,\th)+d_1,\ \ &{\rm{on}}\ \ \Gamma_{en},\\
Z_3(r_1,\th)=0, \ \ &{\rm{on}}\ \ \Gamma_{ex}.
\end{cases}
\end{equation}
Then  similar arguments  as for \eqref{5-12} and \eqref{5-17}  yield
\begin{equation*}
\begin{aligned}
&\|\phi_3\|_{H^2(\m)}\leq C\|\e \mg_2\|_{L^2(\m)}\leq C(\delta_e+\delta_v)\left(\|(\e Z_1,\e Z_2)\|_{L^2(\m)}+\|\e Z_3\|_{H^1(\m)}\right),
\end{aligned}
\end{equation*}
and
\begin{equation*}
\begin{aligned}
\|(\phi_4,Z_3)\|_{H^1(\m)}&\leq C\left(\|\e\mg_1\|_{L^2(\m)}+\|\e\mg_3\|_{L^2(\m)}
+\|\phi_3\|_{H^2(\m)}
\right)\\
&\leq C(\delta_e+\delta_v) \left(\|(\e Z_1,\e Z_2)\|_{L^2(\m)}+\|\e Z_3\|_{H^1(\m)}\right).
\end{aligned}
\end{equation*}
 Collecting the above estimates leads to
\begin{equation}\label{6-9}
\begin{aligned}
\|(Z_1,Z_2)\|_{L^2(\m)}+\|Z_3\|_{H^1(\m)}\leq \mc_2^\sharp(\delta_e+\delta_v) \left(\|(\e Z_1,\e Z_2)\|_{L^2(\m)}+\|\e Z_3\|_{H^1(\m)}\right)
\end{aligned}
\end{equation}
  for a  constant $\mc_2^\sharp>0$ depending only on $(\gamma,b_0, \rho_0,U_{1,0},U_{2,0}, S_{0},E_{0},r_0,r_1,\epsilon_0)$.
 This, together with \eqref{6-3}, yields that
 \begin{equation*}
\begin{aligned}
\|{\bf Z}\|_\Sigma \leq 4\mc_2^\sharp(\mc_1^\sharp+1)(\delta_e+\sigma_p) \|\e{\bf Z}\|_\Sigma.
\end{aligned}
\end{equation*}
 If $\delta_e $ and $\sigma_p$ satisfy
\begin{equation}\label{6-10}
\mc_2^\sharp(\mc_1^\sharp+1)(\delta_e+\sigma_p)\leq \frac18,
\end{equation}
then  one can conclude that  $\mathfrak{F}_1^{\e {\bf N}}$ has a unique fixed point in $\ma_{\delta_v, r_1}$ provided that the conditions \eqref{6-4} and \eqref{6-10} hold.
\par { \bf Step 2. The  existence and uniqueness of a fixed point of $\mathfrak{F}_2$.}
\par For any fixed $ \e {\bf N}\in \ma_{\delta_e, r_1}$,  { \bf Step 1} shows that  $\mathfrak{F}_1^{\e {\bf N}}$ has a unique fixed point in $ \e{\bf W}\in \ma_{\delta_v, r_1}$ provided that the conditions \eqref{6-4} and \eqref{6-10} hold.  Note that
the fixed point $\e {\bf W}$ solves the nonlinear boundary value problem:
\begin{equation}\label{6-11}
\begin{cases}
\p_r\e W_1
  -r B_{22}(r,\th,\e{\bf W},\e N_1)\p_\th \e W_2
   +r B_{12}(r,\th,\e{\bf W},\e N_1)\p_r\e W_2
   +B_{21}(r,\th,\e{\bf W},\e N_1)
  \p_{\th}\e W_1\\
  +\bar a_1(r)\e W_1
  +r\e a_2(r)\e W_2
   +\bar b_1(r)\p_r\e W_3
    +\bar b_2(r)\p_\th\e W_3+\bar b_3(r)\e W_3
    =G_1(r,\th,\e{\bf W},\e{\bf N}), \ \ &{\rm{in}} \ \ \m,\\
\frac{1}{r}(\p_\th \e W_1-\p_r(r\e W_2))=G_2(r,\th,\e{\bf W},\e{\bf N}), \ \ &{\rm{in}} \ \ \m,\\
\left(\p_r^2+\frac 1 r\p_r+\frac{1}{r^2}\p_{\th}^2\right)\e W_3+ \bar a_3(r)\e W_1+r\bar a_4(r)\e W_2-\bar b_4(r)\e W_3
 =G_3(r,\th,\e{\bf W},\e{\bf N}), \ \ &{\rm{in}} \ \ \m,\\
(\e W_1,\e W_2,\p_r\e W_3)(r_0,\th)=(U_{1,en}, U_{2,en},E_{en})(\th)-(U_{1,0},U_{2,0},E_0),\ \ &{\rm{on}}\ \ \Gamma_{en},\\
\e W_3(r_1,\th)=\Phi_{ex}(\th)-\bar\Phi(r_1), \ \ &{\rm{on}}\ \ \Gamma_{ex}.\\
\end{cases}
\end{equation}
Therefore, $( \e U_1, \e U_2, \e \Phi)=(\e W_1, \e W_2, \e W_3)+(\bar U_1, \bar U_2, \bar \Phi)\in (H^3(\m))^2\times H^4(\m) $ solves
\begin{equation}\label{6-12}
\begin{cases}
\bigg(c^2(\e N_1+ K_0,  \e U_1, \e U_2,\e\Phi)- \e U_1^2\big)\p_r  \e U_1+\bigg(c^2(\e  W_1+ K_0,\e U_1, \e U_2,\e\Phi)- \e U_2^2\bigg)\frac{\p_\theta \e U_2}{r}\\
-{\e U_1\e U_2}\bigg(\p_r\e U_2+\frac{\p_\theta \e U_1}{r}\bigg)
+\frac {c^2(\e  N_1+ K_0,\e U_1, \e U_2,\e\Phi)\e U_1}{r}+{\bigg(\e U_1\p_r+\frac{\e U_2}{r}\p_\theta\bigg)\e\Phi}
=0,\ \ &{\rm{in}}\ \ \Omega,\\
\frac{\e U_2}{r}(\p_\th \e U_1-\p_r(r\e U_2))=  \frac{e^{(\e N_2+S_0)} }{\gamma-1}\mh^{\gamma-1}( \e{\bf N}+{\bf N}_0,\e U_1, \e U_2,\e\Phi){\p_r \e N_2}-\p_r \e N_1,\ \ &{\rm{in}}\ \ \Omega,\\
\bigg(\p_r^2+\frac 1 r\p_r+\frac{1}{r^2}\p_{\th}^2\bigg) \e \Phi=\mh(\e{\bf N}+{\bf N}_0,\e U_1, \e U_2,\e\Phi)-b,\ \ &{\rm{in}}\ \ \Omega,\\
(\e U_1, \e U_2,  \p_r\e\Phi)(r_0,\th)=(U_{1,en}, U_{2,en},E_{en})(\th),\ \ &{\rm{on}}\ \ \Gamma_{en},\\
\e\Phi(r_1,\th)=\Phi_{ex}(\th), \ \ &{\rm{on}}\ \ \Gamma_{ex}.\\
\end{cases}
\end{equation}
Furthermore, one can  follow from \eqref{3-1} to derive that $ \mh(\e{\bf N}+{\bf N}_0,\e U_1, \e U_2,\e\Phi)\in H^3(\m) $. Applying the Morrey inequality yields
\begin{equation*}
\|\e\Phi\|_{C^{2,\frac12}(\overline\m)}\leq C\|\e\Phi\|_{H^{4}(\m)} \ {\rm{and}} \
\|\mh\|_{C^{1,\frac12}(\overline\m)}\leq C\|\mh\|_{H^{3}(\m)}.\\
\end{equation*}
Then we consider the following problem:
\begin{equation}\label{6-12-f}
\begin{cases}
\bigg(\p_r^2+\frac 1 r\p_r+\frac{1}{r^2}\p_{\th}^2\bigg) \e \Phi=\mh(\e{\bf N}+{\bf N}_0,\e U_1, \e U_2,\e\Phi)-b,\ \ &{\rm{in}}\ \ \Omega,\\
  \p_r\e\Phi(r_0,\th)=E_{en}(\th),\ \ &{\rm{on}}\ \ \Gamma_{en},\\
  \e\Phi(r_1,\th)=\Phi_{ex}(\th), \ \ &{\rm{on}}\ \ \Gamma_{ex}.\\
\end{cases}
\end{equation}
By the standard Schauder
estimate, we obtain that
\begin{equation}\label{6-12-f-f}
\begin{aligned}
\|\e\Phi\|_{C^{3,\frac12}(\overline\m)}
\leq C(\|\e\Phi\|_{H^{4}(\m)}+\|\mh\|_{H^{3}(\m)}+\|b\|_{C^{2}(\overline\m)}
+\|(E_{en},\Phi_{ex})\|_{C^{4}(\mathbb{T}_{2\pi})}
\bigg).\\
\end{aligned}
\end{equation}
\par Next, for any $\e {\bf N}\in\ma_{\delta_e, r_1} $, we construct a mapping  $\mathfrak{F}_2$: $\e {\bf N}\in\ma_{\delta_e, r_1} \mapsto {\bf N}\in\ma_{\delta_e, r_1}$, where $ {\bf N}=(N_1,N_2)$ solves the following transport equations, respectively:
\begin{equation}\label{6-13}
\begin{cases}
\begin{aligned}
&\mh(\e{\bf N}+{\bf N}_0,\e U_1, \e U_2,\e\Phi)\bigg(\e U_1\partial_r +\frac{\e U_2}{r}\partial_{\theta}\bigg) N_1=0, \ \ {\rm{in}}\ \ \Omega,\\
&N_1(r_0,\th)=  K_{en}(\th)-K_0,\qquad\qquad\qquad\qquad\quad {\rm{on}}\ \ \Gamma_{en},\\
 \end{aligned}
\end{cases}
\end{equation}
and
\begin{equation}\label{6-14}
\begin{cases}
\begin{aligned}
&\mh(\e{\bf N}+{\bf N}_0,\e U_1, \e U_2,\e\Phi)\bigg(\e U_1\partial_r +\frac{\e U_2}{r}\partial_{\theta}\bigg) N_2=0,\ \ {\rm{in}}\ \ \Omega,\\
 &N_2(r_0,\th)=S_{en}(\th)-S_0,\qquad\qquad\qquad\qquad\quad {\rm{on}}\ \ \Gamma_{en}.\\
\end{aligned}
\end{cases}
\end{equation}
It follows from the first equation in \eqref{6-12} that
\begin{equation}\label{6-15}
\partial_r\bigg(r\mh(\e{\bf N}+{\bf N}_0,\e U_1, \e U_2,\e\Phi) \e U_1\bigg)+\partial_{\theta}\bigg(\mh(\e{\bf N}+{\bf N}_0,\e U_1, \e U_2,\e\Phi) \e U_2\bigg)=0,
\end{equation}
from which  one can define a stream function on $[r_0,r_1]\times \mathbb{R}$ as
\begin{equation*}
\mathscr{L}(r,\th)=\int_{0}^{\th}r_0\left(\mh(\e{\bf N}+{\bf N}_0,\e U_1, \e U_2,\e\Phi)\e U_1\right)(r_0,s)\de s
-\int_{r_0}^r\left(\mh(\e{\bf N}+{\bf N}_0,\e U_1, \e U_2,\e\Phi)\e U_2\right)(s,\th) \de s.
\end{equation*}
 Here the function $\mathscr{L}$ defined above may not be periodic in $\theta$. However, it holds true that
\begin{equation*}\begin{cases}
\p_r\mathscr{L}(r,\th)=-\left(\mh(\e{\bf N}+{\bf N}_0,\e U_1, \e U_2,\e\Phi)\e U_2\right)(r,\th)\in H^3(\m),\\
\p_\th\mathscr{L}(r,\th)=r\left(\mh(\e{\bf N}+{\bf N}_0,\e U_1, \e U_2,\e\Phi)\e U_1\right)(r,\th)\in H^3(\m),\\
\mathscr{L}(r,\th)\in  H^4(\m).\\
\end{cases}
\end{equation*}
  Note that $\p_\th\mathscr{L}(r_0,\th)=r_0\bigg(\mh(\e{\bf N}+{\bf N}_0,\e U_1, \e U_2,\e\Phi)\e U_{1}\bigg)(r_0,\th)>0$, from which one obtains $\mathscr{L}(r,\th)$ is an strictly increasing function of $\th$ for each fixed $r\in [r_0, r_1]$. Therefore, the inverse function of $\mathscr{L}(r_0,\cdot)$: $\th\in\mathbb{R}\mapsto t\in\mathbb{R}$ is well-defined and is denoted by $\mathscr{L}_{r_0}^{-1}$: $t\in\mathbb{R}\mapsto\th\in\mathbb{R}$.
  \par  Define
\begin{equation}\label{6-16-1}
\begin{cases}
\begin{aligned}
&N_1(r,\th)=  K_{en}\left(\mathscr{L}_{r_0}^{-1}(\mathscr{L}(r,\th))\right)-K_0,\ \ (r,\th)\in \m,\\
&N_2(r,\th)=  S_{en}\left(\mathscr{L}_{r_0}^{-1}(\mathscr{L}(r,\th))\right)-S_0, \ \ (r,\th)\in \m.
\end{aligned}
\end{cases}
\end{equation}
We claim that $N_1$ and $N_2$ defined in \eqref{6-16-1} are periodic in $\theta$ with period $2\pi$. Indeed, it follows from the definition that $$N_1(r,\th+2\pi)=K_{en}\left(\mathscr{L}_{r_0}^{-1}(\mathscr{L}(r,\th+2\pi))\right)-K_0.$$  Denote $\beta_0=\mathscr{L}_{r_0}^{-1}(\mathscr{L}(r,\th))$ and $\beta_1=\mathscr{L}_{r_0}^{-1}(\mathscr{L}(r,\th+2\pi))$. It suffices to show that $\beta_0+2\pi=\beta_1$. Since $\mh(\e{\bf N}+{\bf N}_0,\e U_1, \e U_2,\e\Phi)(\e{U}_1, \e{U}_2)$ is periodic in $\theta$ with period $2\pi$, one has
\begin{equation*}
\begin{aligned}
\mathscr{L}_{r_0}(\beta_1)&=\mathscr{L}(r,\theta+2\pi)= \mathscr{L}(r,\th)+ \int_{\theta}^{\theta+2\pi} r_0\left(\mh(\e{\bf N}+{\bf N}_0,\e U_1, \e U_2,\e\Phi)\e U_1\right)(r_0,s)\de s\\
&= \mathscr{L}_{r_0}(\beta_0)+ \int_{\beta_1-2\pi}^{\beta_1} r_0\left(\mh(\e{\bf N}+{\bf N}_0,\e U_1, \e U_2,\e\Phi)\e U_1\right)(r_0,s)\de s.
\end{aligned}
\end{equation*}
 Note that
 $$ \mathscr{L}_{r_0}(\beta_1)=\int_{0}^{\beta_1} r_0\left(\mh(\e{\bf N}+{\bf N}_0,\e U_1, \e U_2,\e\Phi)\e U_1\right)(r_0,s)\de s,$$ there holds
\begin{equation*}
\mathscr{L}_{r_0}(\beta_0)= \mathscr{L}_{r_0}(\beta_1)- \int_{\beta_1-2\pi}^{\beta_1}r_0\left(\mh(\e{\bf N}+{\bf N}_0,\e U_1, \e U_2,\e\Phi)\e U_1\right)(r_0,s)\de s=\mathscr{L}_{r_0}(\beta_1-2\pi).
\end{equation*}
By the monotonicity of $\mathscr{L}_{r_0}(\cdot)$, $\beta_1-2\pi=\beta_0$. It is easy to verify that the functions defined in \eqref{6-16-1} yield the unique solution to \eqref{6-13} and \eqref{6-14}, respectively.
  Furthermore,   it follows from  \eqref{1-9-a}, \eqref{1-9-bb} and \eqref{6-12-f-f} that
\begin{equation*}\begin{aligned}
&\left(\mh(\e {\bf N}+ {\bf N}_0,\e\Phi,\e U_1, \e U_2)\e U_1\right)(r_0,\cdot)\\
&=\left(\bigg(\frac{\gamma-1}{\gamma e^{S_{en}}}\bigg)
^{\frac{1}{\gamma-1}}\bigg(K_{en}+\e\Phi-\frac{1}{2}\left(U_{1,en}^2+U_{2,en}^2\right)
\bigg)
^{\frac{1}{\gamma-1}}U_{1,en}\right)(r_0,\cdot)\in C^{3}(\mathbb{T}_{2\pi}).
\end{aligned}
\end{equation*} Then one derives $\mathscr{L}_{r_0}^{-1}(\cdot)\in
C^{4}(\mathbb{R})$ and
\begin{equation}\label{6-17}
\begin{cases}
\begin{aligned}
&\|N_1\|_{H^4(\m)}\leq \mc_3^\sharp \|K_{ en}-K_0\|_{C^4(\mathbb{T}_{2\pi})},\\
&\|N_2\|_{H^4(\m)}\leq \mc_3^\sharp \|S_{ en}-S_0\|_{C^4(\mathbb{T}_{2\pi})},\\
\end{aligned}
\end{cases}
\end{equation}
where  $\mc_3^\sharp>0$ is a constant depending only on $(\gamma,b_0, \rho_0,U_{1,0},U_{2,0}, S_{0},E_{0},r_0,r_1,\epsilon_0)$.
\par  Define another iteration mapping
 \begin{equation*}
\mathfrak{F}_2(\e{\bf N})={\bf N}, \quad{\rm{ for \ each}} \ \ \e{\bf N}\in \ma_{\delta_e, r_1}.
\end{equation*}
Set
\begin{equation}
\label{6-20}
\delta_e= 4\mc_3^\sharp\sigma_p.
\end{equation}
Then \eqref{6-17} yields that
\begin{equation*}
\|{\bf N}\|_{H^4(\m)}\leq 2\mc_3^\sharp\sigma_p\leq \frac12 \delta_e.
\end{equation*}
This shows that the mapping $\mathfrak{F}_2$ maps $\ma_{\delta_e, r_1}$ into itself.
It remains to show that the mapping $\mathfrak{F}_2$ is contractive in a low order norm for suitably small $\sigma_p$.  for
any $\e{\bf N}^{(i)}\in \ma_{\delta_e, r_1}, (i=1,2)$, set  ${\bf N}^{(i)}=\mathfrak{F}_2(\e {\bf N}^{(i)}) $ and denote
$$ {\bf N}^{h}=\e {\bf N}^{(1)}-\e {\bf N}^{(2)}\ \ {\rm{ and }}\ \ {\bf N}^{h}={\bf N}^{(1)}-{\bf N}^{(2)}. $$
Furthermore, set $$( \e U_1^{(i)},\e U_2^{(i)}, \e \Phi^{(i)})=(\e W_1^{(i)}, \e W_2^{(i)}, \e W_3^{(i)})+(\bar U_1, \bar U_2, \bar \Phi),$$ where $ \e {\bf W}^{(i)} $ is the unique fixed point of $\mathfrak{F}_1^{(\e{\bf N}^{(i)})}$ for $i=1,2$.
 Then it follows from \eqref{6-16-1} that
\begin{equation}\label{6-21}
\begin{cases}
\begin{aligned}
&N_1^{(i)}(r,\th)=  K_{en}\left(\mathscr{T}^{(i)}(r,\th)\right)-K_0,\ \ (r,\th)\in \m,\\
&N_2^{(i)}(r,\th)=  S_{en}\left(\mathscr{T}^{(i)}(r,\th)\right)-S_0, \ \ (r,\th)\in \m,
\end{aligned}
\end{cases}
\end{equation}
with $ \mathscr{T}^{(i)}(r,\th)=(\mathscr{L}_{r_0}^{(i)})^{-1}(\mathscr{L}^{(i)}(r,\th))
 $ and
\begin{equation*}
\begin{aligned}
\mathscr{L}^{(i)}(r,\th)&=\int_{0}^{\th}r_0\left(\mh(\e{\bf N}^{(i)}+{\bf N}_0,\e U_1^{(i)}, \e U_2^{(i)},\e\Phi^{(i)})\e U_1^{(i)}\right)(r_0,s)\de s\\
&\quad  -\int_{r_0}^r\left(\mh(\e{\bf N}^{(i)}+{\bf N}_0,\e U_1^{(i)}, \e U_2^{(i)},\e\Phi^{(i)})\e U_2^{(i)}\right)(s,\th) \de s.
\end{aligned}
\end{equation*}
Here $(\mathscr{L}_{r_0}^{(i)})^{-1}$: $t\in\mathbb{R}\mapsto\th\in\mathbb{R}$ is the inverse function of $\mathscr{L}^{(i)}(r_0,\cdot)$: $\th\in\mathbb{R}\mapsto t\in\mathbb{R}$.
Therefore,
 one obtains
\begin{equation*}
|N_1^{h}|=|N_1^{(1)}-N_1^{(2)}|\leq \|K'_{en}\|_{L^\infty(\mathbb{T}_{2\pi})}\left
|\mathscr{T}^{(1)}(r,\th)-\mathscr{T}^{(2)}(r,\th)\right|.
\end{equation*}
 Note that $ \mathscr{L}^{(i)}_{r_0}\left(\mathscr{T}^{(i)}(r,\th)\right)
     =\mathscr{L}^{(i)}(r,\th)$. Then it holds that
\begin{align*}
&\int_{\mathscr{T}^{2}(r,\th)}^{\mathscr{T}^{(1)}(r,\th)}
r_0\left(\mh(\e {\bf N}^{(1)}+ {\bf N}_0,\e U_1^{(1)}, \e U_2^{(1)},\e\Phi^{(1)})\e U_1^{(1)}\right)(r_0,s) \de s
=\mathscr{L}^{(1)}(r,\th)-\mathscr{L}^{(2)}(r,\th)\\
&\quad-\int_{0}^{\mathscr{T}^{(2)}(r,\th)}
r_0\left(\mh(\e {\bf N}^{(1)}+ {\bf N}_0,\e U_1^{(1)}, \e U_2^{(1)},\e\Phi^{(1)})\e U_1^{(1)}-\mh(\e {\bf N}^{(1)}+ {\bf N}_0,\e U_1^{(2)}, \e U_2^{(2)},\e\Phi^{(2)})\e U_1^{(2)}\right)(r_0,s)\de s,
\end{align*}
from which one derives
\begin{align*}
&\mathfrak{m}^{(1)}\left|\mathscr{T}^{(1)}(r,\th)-\mathscr{T}^{(2)}(r,\th)\right|
\leq\left|\mathscr{L}^{(1)}(r,\th)-\mathscr{L}^{(2)}(r,\th)\right|\\
&\quad+\int_{0}^{2\pi}r_0\left|\mh(\e {\bf N}^{(1)}+ {\bf N}_0,\e U_1^{(1)}, \e U_2^{(1)},\e\Phi^{(1)})\e U_1^{(1)}-\mh(\e {\bf N}^{(2)}+ {\bf N}_0,\e U_1^{(2)}, \e U_2^{(2)},\e\Phi^{(2)})\e U_1^{(2)}\right|(r_0,s)\de s
\end{align*}
with $\mathfrak{m}^{(i)}:=\displaystyle\min_{\th\in[0,2\pi]}r_0\left(\mh(\e {\bf N}^{(i)}+ {\bf N}_0,\e U_1^{(i)}, \e U_2^{(i)},\e\Phi^{(i)})\e U_1^{(i)}\right)(r_0,\th)>0$. Noting that
$$\left(\e N_1^{(1)}-\e N_1^{(2)}\right)(r_0,\th)=\left(\e N_2^{(1)}-\e N_2^{(2)}\right)(r_0,\th)=\left(\e U_1^{(1)}-\e U_1^{(2)}\right)(r_0,\th)=\left(\e U_2^{(1)}-\e U_2^{(2)}\right)(r_0,\th)\equiv 0,$$
one has
\begin{equation}\label{6-22}
\|N_1^{h}\|_{L^2(\Omega)}\leq C\sigma_p\bigg(\|(\tilde{{\bf W}}^{(1)}-\tilde{{\bf W}}^{(2)},\tilde {\bf N}^{h})\|_{L^2(\Omega)}+\|(\tilde W_{3}^{(1)}-\tilde W_{3}^{(2)})(r_0,\cdot)\|_{L^2(\mathbb{T}_{2\pi})}\bigg).
\end{equation}
Furthermore, there holds
\begin{equation*}
\begin{aligned}
|\p_{r} N_1^{h}|=&\left|K_{en}'\left(\mathscr{T}^{(1)}(r,\th)\right)\p_r \mathscr{T}^{(1)}(r,\th)-K_{en}'\left(\mathscr{T}^{(2)}(r,\th)\right)\p_r \mathscr{T}^{(2)}(r,\th)\right|\\
=&\left|\left(K_{en}'\left(\mathscr{T}^{(1)}(r,\th)\right)-K_{en}'
\left(\mathscr{T}^{(2)}(r,\th)\right)\right)\p_r \mathscr{L}^{(1)}+ K_{en}'\left(\mathscr{T}^{(2)}(r,\th)\right)(\p_r \mathscr{T}^{(1)}-\p_r
\mathscr{T}^{(2)})\right|\\
\leq& \|K''_{en}\|_{L^\infty(\mathbb{T}_{2\pi})}
\left|\mathscr{T}^{(1)}(r,\th)-\mathscr{T}^{(2)}(r,\th)\right|
\frac{1}{\mathfrak{m}^{(1)}}\|\nabla
\mathscr{L}^{(1)}(r,\th)\|_{L^\infty(\Omega)}\\
&+ \|K'_{en}\|_{L^\infty(\mathbb{T}_{2\pi})}\frac{\|\nabla
\mathscr{L}^{(1)}(r,\th)\|_{L^\infty(\Omega)}}
{\mathfrak{m}^{(1)}\mathfrak{m}^{(2)}}\bigg|\mh(\e {\bf N}^{(1)}+ {\bf N}_0,\e U_1^{(1)}, \e U_2^{(1)},\e\Phi^{(1)})\e U_1^{(1)}(r_0,\mathscr{T}^{(1)})\\
&\quad-\mh(\e {\bf N}^{(2)}+ {\bf N}_0,\e U_1^{(2)}, \e U_2^{(2)},\e\Phi^{(2)})\e U_1^{(2)}(r_0,\mathscr{T}^{(2)})\bigg|\\
&+ \|K'_{en}\|_{L^\infty(\mathbb{T}_{2\pi})}\frac{1}{\mathfrak{m}^{(2)}} \left|\nabla \mathscr{L}^{(1)}(r,\th)-\nabla \mathscr{L}^{(2)}(r,\th)\right|,
\end{aligned}
\end{equation*}
and similar computations are valid for $\p_{\th} N_1^{h}$. Then one has
\begin{equation}\label{6-23}
\|\nabla N_1^{h}\|_{L^2(\Omega)}\leq C\sigma_p\bigg(\|(\tilde{{\bf W}}^{(1)}-\tilde{{\bf W}}^{(2)},\tilde {\bf N}^{h})\|_{L^2(\Omega)}+\|(\tilde W_{3}^{(1)}-\tilde W_{3}^{(2)})(r_0,\cdot)\|_{L^2(\mathbb{T}_{2\pi})}\bigg).
\end{equation}
Collecting the estimates \eqref{6-22} and \eqref{6-23} leads to
\begin{align}\label{6-24}
\|W_1^{h}\|_{H^1(\Omega)}\leq C\sigma_p\bigg(\|(\tilde{{\bf W}}^{(1)}-\tilde{{\bf W}}^{(2)},\tilde {\bf N}^{h})\|_{L^2(\Omega)}+\|(\tilde W_{3}^{(1)}-\tilde W_{3}^{(2)})(r_0,\cdot)\|_{L^2(\mathbb{T}_{2\pi})}\bigg).
\end{align}
 The  estimate constant  $C>0$ in \eqref{6-22}-\eqref{6-24} depends only on $(\gamma,b_0, \rho_0,U_{1,0},U_{2,0}, S_{0},E_{0},r_0,r_1,\epsilon_0)$.
Furthermore, the above estimates hold for $N_2^{h}$. Therefore, one obtains
\begin{align}\label{6-25}
\|{\bf N}^{h}\|_{H^1(\Omega)}\leq C_4^\sharp
\sigma_p\bigg(\|(\tilde{{\bf W}}^{(1)}-\tilde{{\bf W}}^{(2)},\tilde {\bf N}^{h})\|_{L^2(\Omega)}+\|(\tilde W_{3}^{(1)}-\tilde W_{3}^{(2)})(r_0,\cdot)\|_{L^2(\mathbb{T}_{2\pi})}\bigg)
\end{align}
 for a constant $C_4^\sharp>0$ depending only on $(\gamma,b_0, \rho_0,U_{1,0},U_{2,0}, S_{0},E_{0},r_0,r_1,\epsilon_0)$.
 \par
In the following, it remains to estimate
\begin{align*}
\|\e{{\bf W}}^{(1)}-\e{{\bf W}}^{(2)}\|_{L^{2}(\m)}+\|(\e W_{3}^{(1)}-\e W_{3}^{(2)})(r_0,\cdot)\|_{L^{2}(\mathbb{T}_{2\pi})}.
\end{align*}
Indeed, set
${\bf R}=\tilde{{\bf W}}^{(1)}-\tilde{{\bf W}}^{(2)}.$ It follows from \eqref{6-11} that
\begin{equation}\label{6-26}
\begin{cases}
\p_r R_1
  -r B_{22}(r,\th,\e{\bf W}^{(1)},\e N_1^{(1)})\p_\th R_2
   +r B_{12}(r,\th,\e{\bf W},\e N_1)\p_rR_2\\
   +B_{21}(r,\th,\e{\bf W}^{(1)},\e N_1^{(1)})
  \p_{\th} R_1
  +\bar a_1(r) R_1
  +r\e a_2(r) R_2\\
   +\bar b_1(r)\p_r R_3
    +\bar b_2(r)\p_\th R_3+\bar b_3(r) R_3
    =\mg_4(r,\th,\e{\bf W}^{(1)},\e{\bf N}^{(1)},\e{\bf W}^{(2)},\e{\bf N}^{(2)}), \ \ &{\rm{in}} \ \ \m,\\
\frac{1}{r}(\p_\th R_1-\p_r(rR_2))=\mg_5(r,\th,\e{\bf W}^{(1)},\e{\bf N}^{(1)},\e{\bf W}^{(2)},\e{\bf N}^{(2)}), \ \ &{\rm{in}} \ \ \m,\\
\left(\p_r^2+\frac 1 r\p_r+\frac{1}{r^2}\p_{\th}^2\right)R_3+ \bar a_3(r)R_1+r\bar a_4(r)R_2-\bar b_4(r)R_3\\
 =\mg_5(r,\th,\e{\bf W}^{(1)},\e{\bf N}^{(1)},\e{\bf W}^{(2)},\e{\bf N}^{(2)}), \ \ &{\rm{in}} \ \ \m,\\
R_1(r_0,\th)=R_2(r_0,\th)=\p_r R_3(r_0,\th)=0,\ \ &{\rm{on}}\ \ \Gamma_{en},\\
R_3(r_1,\th)=0, \ \ &{\rm{on}}\ \ \Gamma_{ex},
\end{cases}
\end{equation}
where
\begin{equation*}
\begin{aligned}
&\mg_4(r,\th,\e{\bf W}^{(1)},\e{\bf N}^{(1)},\e{\bf W}^{(2)},\e{\bf N}^{(2)})=\bigg(r B_{22}(r,\th,\e{\bf W}^{(1)},\e N_1^{(1)})-r B_{22}(r,\th,\e{\bf W}^{(2)},\e N_1^{(2)})\bigg)\p_\th \e W_2^{(2)}\\
&\ -\bigg(r B_{12}(r,\th,\e{\bf W}^{(1)},\e N_1^{(1)})-r B_{12}(r,\th,\e{\bf W}^{(2)},\e N_1^{(2)})\bigg)\p_r \e W_2^{(2)}
 -\bigg( B_{21}(r,\th,\e{\bf W}^{(1)},\e N_1^{(1)})\\
&\ - B_{21}(r,\th,\e{\bf W}^{(2)},\e N_1^{(2)})\bigg)\p_\th \e W_1^{(2)}
 +G_1(r,\th,\e{\bf W}^{(1)},\e{\bf N}^{(1)})-G_1(r,\th,\e{\bf W}^{(2)},\e{\bf N}^{(2)}),\\
 &\mg_5(r,\th,\e{\bf W}^{(1)},\e{\bf N}^{(1)},\e{\bf W}^{(2)},\e{\bf N}^{(2)})=G_2(r,\th,\e{\bf W}^{(1)},\e{\bf N}^{(1)})-G_2(r,\th,\e{\bf W}^{(2)},\e{\bf N}^{(2)}),\\
 &\mg_6(r,\th,\e{\bf W}^{(1)},\e{\bf N}^{(1)},\e{\bf W}^{(2)},\e{\bf N}^{(2)})=G_3(r,\th,\e{\bf W}^{(1)},\e{\bf N}^{(1)})-G_3(r,\th,\e{\bf W}^{(2)},\e{\bf N}^{(2)}).\\
\end{aligned}
\end{equation*}
Then it  holds that
\begin{equation}\label{6-28-a}
\begin{cases}
\|\mg_4(\cdot,\e{\bf W}^{(1)},\e{\bf N}^{(1)},\e{\bf W}^{(2)},\e{\bf N}^{(2)})\|_{L^2(\Omega)}\leq C\bigg(\|\tilde {\bf N}^{h}\|_{L^2(\Omega)}+\delta_v\|{\bf R}\|_{L^2(\Omega)}\bigg),\\
\|\mg_5(\cdot,\e{\bf W}^{(1)},\e{\bf N}^{(1)},\e{\bf W}^{(2)},\e{\bf N}^{(2)})\|_{L^2(\Omega)}\leq C\bigg(\|\tilde {\bf N}^{h}\|_{L^2(\Omega)}+\delta_e\|{\bf R}\|_{L^2(\Omega)}\bigg),\\
\|\mg_6(\cdot,\e{\bf W}^{(1)},\e{\bf N}^{(1)},\e{\bf W}^{(2)},\e{\bf N}^{(2)})\|_{L^2(\Omega)}\leq C\bigg(\|\tilde {\bf N}^{h}\|_{L^2(\Omega)}+\delta_v\|{\bf R}\|_{L^2(\Omega)}\bigg).
\end{cases}
\end{equation}
 Similar arguments as for \eqref{6-6} yield
\begin{equation}\label{6-28}
\|(R_1,R_2)\|_{L^{2}(\m)}+ \|R_3\|_{H^1(\m)}
\leq C\bigg(\|\h {\bf N}^h\|_{H^1(\mn)}+(\delta_e+\delta_v)\|{\bf R}\|_{L^{2}(\m)}\bigg).
\end{equation}
By the trace theorem, one has
\begin{equation}\label{6-29}
\|R_3(r_0,\cdot)\|_{L^{2}(\mathbb{T}_{2\pi})}\leq C\bigg(\|\h {\bf N}^h\|_{H^1(\mn)}+(\delta_e+\delta_v)\|{\bf R}\|_{L^{2}(\m)}\bigg).
\end{equation}
Therefore,  collecting the estimates \eqref{6-28}-\eqref{6-29} together with \eqref{6-3} and \eqref{6-20} leads to
\begin{equation}\label{6-30}
\begin{aligned}
&\|\e{{\bf W}}^{(1)}-\e{{\bf W}}^{(2)}\|_{L^{2}(\m)}+\|(\e W_{3}^{(1)}-\e W_{3}^{(2)})(r_0,\cdot)\|_{L^{2}\mathbb{T}_{2\pi})}\leq \mc_5^\sharp\bigg(\|\e {\bf N}^h\|_{H^1(\m)}+(\delta_e+\delta_v)\|\e{{\bf W}}^{(1)}-\e{{\bf W}}^{(2)}\|_{L^{2}(\m)}\bigg)\\
&
\leq \mc_5^\sharp\bigg(\|\e {\bf N}^h\|_{H^1(\m)}+
\bigg(4\mc_3^\sharp\sigma_p+4\mc_1^\sharp(4\mc_3^\sharp\sigma_p+\sigma_p)
\bigg)\|\e{{\bf W}}^{(1)}-\e{{\bf W}}^{(2)}\|_{L^{2}(\m)}\bigg)
\end{aligned}
\end{equation}
 for a constant $\mc_5^\sharp>0$ depending only on $(\gamma,b_0, \rho_0,U_{1,0},U_{2,0}, S_{0},E_{0},r_0,r_1,\epsilon_0)$. If  $ \sigma_p $ satisfies
\begin{align}\label{6-31}
\sigma_p  \leq \frac1{2\mc_5^\sharp\bigg(4\mc_3^\sharp+4\mc_1^\sharp(4\mc_3^\sharp+1)
\bigg)},
\end{align}
there holds
\begin{align*}
\|\e{{\bf W}}^{(1)}-\e{{\bf W}}^{(2)}\|_{L^{2}(\m)}+\|(\e W_{3}^{(1)}-\e W_{3}^{(2)})(r_0,\cdot)\|_{L^{2}\mathbb{T}_{2\pi})}\leq 2\mc_5^\sharp\|\e {\bf N}^h\|_{H^1(\m)}.
\end{align*}
Furthermore, it  follows from \eqref{6-25} that
\begin{align*}
\| {\bf N}^h\|_{H^1(\m)}\leq \mc_4^\sharp(2C_5^\sharp+1)
\sigma_p\|\e {\bf N}^h\|_{H^1(\m)}.
\end{align*}
Next, if  $ \sigma_p $ satisfies
\begin{align}\label{6-33}
\mc_4^\sharp(2C_5^\sharp+1)
\sigma_p \leq \frac14,
\end{align}
Then $\mathfrak{F}_2$ is a contractive mapping in $H^1(\m)$-norm and there exists a unique fixed point ${\bf N}\in \ma_{\delta_e, r_1}$ provided that the conditions \eqref{6-4}, \eqref{6-10},  \eqref{6-31}  and \eqref{6-33} hold.
\par Now, under the choices of $( \delta_v,\delta_e)$ given by \eqref{6-3} and \eqref{6-20}, one can choose $\sigma_1^{\sharp}$ so that whenever $\sigma_p\in (0, \sigma^{\sharp}_1]$, the conditions \eqref{5-44}, \eqref{6-4}, \eqref{6-10}, \eqref{6-31}  and \eqref{6-33} hold.
\begin{enumerate}[ \rm(1)]
 \item By  \eqref{6-3} and \eqref{6-20}, the condition \eqref{5-44} holds if
 \begin{equation*}
 \sigma_p \leq \frac{\delta_3}{4\mc_1^\sharp(4\mc_3^\sharp+1)+4\mc_3^\sharp}=:\varepsilon_1
 \end{equation*}
 for $ (\mc_1^\sharp,\mc_3^\sharp) $ from \eqref{6-1} and \eqref{6-17}.
 \item The conditions \eqref{6-4} and \eqref{6-10} hold if
 \begin{equation*}
 \sigma_p \leq \min\left\{\frac{1}{16(4\mc_3^\sharp+1)(\mc_1^\sharp)^2},
 \frac{1}{8(4\mc_3^\sharp+1)\mc_2^\sharp(\mc_1^\sharp+1)}
 \right\}=:\varepsilon_2
  \end{equation*}
   for $ \mc_2^\sharp$  from \eqref{6-9}.
   \item The conditions \eqref{6-31}  and \eqref{6-33} hold if
   \begin{equation*}
    \sigma_p \leq \min\left\{\frac1{2\mc_5^\sharp\bigg(4\mc_3^\sharp+4\mc_1^\sharp(4\mc_3^\sharp+1)
\bigg)},\frac{1}{4\mc_4^\sharp(2C_5^\sharp+1)}\right\}=:\varepsilon_3
    \end{equation*}
  for $ (\mc_4^\sharp,\mc_5^\sharp) $ from \eqref{6-25} and \eqref{6-30}.
  \end{enumerate}
  Therefore, we choose $\sigma_{1}^{\sharp}$ as
 \begin{equation}
 \label{sigma-bd-step2}
\sigma_{1}^{\sharp} =\min\left\{\varepsilon_k: k=1,2,3
  \right\}.
\end{equation}
If
\begin{equation}\label{choice1-sigma-full}
\sigma_p \leq \sigma_{1}^{\sharp},
  \end{equation}
then all the conditions \eqref{5-44}, \eqref{6-4}, \eqref{6-10}, \eqref{6-31}  and \eqref{6-33} hold  under the choices of $( \delta_v,\delta_e)$ given by \eqref{6-3} and \eqref{6-20}.
\par { \bf Step 3. Uniqueness.}
\par
Under the assumption of \eqref{choice1-sigma-full},
let $(U_1^{(i)},U_2^{(i)},\Phi^{(i)}, K^{(i)}, S^{(i)})$  ($i=1$, $2$) be two solutions to Problem \ref{probl2} and assume that both solutions satisfy \eqref{1-t-3-n}-\eqref{1-t-4-n}. Set
\begin{equation*}
  (\e U_1, \e U_2, \e \Phi, \e K,\e S)=(U_1^{(1)},U_2^{(1)},\Phi^{(1)}, K^{(1)}, S^{(1)})-(U_1^{(2)},U_2^{(2)},\Phi^{(2)}, K^{(2)}, S^{(2)}).
\end{equation*}
 Similar to proof of   the
contraction of the two mappings $\mathfrak{F}_1^{{\bf N}}$ and $\mathfrak{F}_2$, one can derive
\begin{equation*}
\|(\e U_1,\e U_2)\|_{L^2(\m)}+\|\e \Phi\|_{H^1(\m)}\leq \mc_6^\sharp\sigma_p \bigg(\|(\e U_1,\e U_2)\|_{L^2(\m)} +\|(\e \Phi,\e K,\e S)\|_{H^1(\m)}\bigg),
\end{equation*}
and
\begin{equation*}
\|(\e K,\e S)\|_{H^1(\m)}\leq \mc_6^\sharp\sigma_p \bigg(\|(\e U_1,\e U_2)\|_{L^2(\m)} +\|(\e \Phi,\e K,\e S)\|_{H^1(\m)}\bigg),
\end{equation*}
where  $\mc_6^\sharp>0$  is a  constant depending only on $(\gamma,b_0, \rho_0,U_{1,0}, U_{2,0}, S_{0},E_{0},r_0,r_1,\epsilon_0)$. Collecting these estimates leads to
\begin{equation}\label{6-35}
\|(\e U_1,\e U_2)\|_{L^2(\m)}+\|(\e \Phi,\e K,\e S)\|_{H^1(\m)}\leq  2\mc_6^\sharp\sigma_p \bigg(\|(\e U_1,\e U_2)\|_{L^2(\m)} +\|(\e \Phi,\e K,\e S)\|_{H^1(\m)}\bigg),
\end{equation}
So if
\begin{equation}\label{6-37}
\sigma_p\leq \frac{1}{4\mc_6^\sharp}=:\sigma_2^\sharp,
\end{equation}
one obtains
\begin{equation*}
 (\e U_1, \e U_2, \e \Phi, \e K,\e S)=(U_1^{(1)},U_2^{(1)},\Phi^{(1)}, K^{(1)}, S^{(1)})-(U_1^{(2)},U_2^{(2)},\Phi^{(2)}, K^{(2)}, S^{(2)})=0.
\end{equation*}
The proof of Theorem \ref{th2} is completed by choosing $ \sigma_2^\ast $ as
\begin{equation*}
\sigma_2^\ast =\min\{\sigma_1^\sharp ,\sigma_2^\sharp\},
\end{equation*}
where $  \sigma_1^\sharp $ is given in  \eqref{sigma-bd-step2}.
That is, the background supersonic   flow is structurally stable within rotational flows under  perturbations of the boundary
conditions in \eqref{1-c-n}.
\section{The stability analysis within axisymmetric  rotational flows}\noindent
\par In this section, we first utilize the deformation-curl-Poisson decomposition again to reformulate the
steady axisymmetric Euler-Poisson system. Then we design an iteration scheme to prove  Theorem \ref{th3}.
\par Using the Bernoulli's function,  the density $ \rho $  can be represented  as
\begin{equation}\label{7-3}
\rho=\mm(K,S,\Phi,U_1, U_2,U_3)=
\left(\frac{\gamma-1}{\gamma e^S}\bigg(K+\Phi-\frac{1}{2}\left(U_1^2+U_2^2+U_3^2\right)\bigg)\right)
^{\frac{1}{\gamma-1}}.
\end{equation}
Substituting \eqref{7-3}   into the first equation in \eqref{2-10},  then the system \eqref{2-10} in $\mn $ is equivalent to the following system:
\begin{equation}\label{7-4}
\begin{cases}
\begin{aligned}
&\p_r\left(r\mm(K,S,\Phi,U_1, U_2,U_3)U_1\right)+\p_{z}\left(r\mm(K,S,\Phi,U_1, U_2,U_3)U_3\right)=0,\\
&U_1(\p_r U_3-\p_{z} U_1)=U_2\p_{z}U_2+\frac{e^S\mm^{\gamma-1}(K,S,\Phi,U_1, U_2, U_3),
 }{\gamma-1}{\p_{z} S}-\p_{z}K,\\
&(U_1\p_r+U_3\p_{z})(r U_2)=0,\\
&(U_1\p_r+U_3\p_{z})S=0,\\
&(U_1\p_r+U_3\p_{z})K=0,\\
&\left(\p_r^2+\frac1 r\p_r+\p_{z}^2\right)\Phi=\mm(K,S,\Phi,U_1, U_2,U_3)-b.
\end{aligned}
\end{cases}
\end{equation}
\par Set
\begin{equation*}
\begin{aligned}
&T_1(r,z)=U_1(r,z)-\bar U_1(r),\ \ T_2(r,z)=U_{2}(r,z)-\bar U_2(r), \quad  T_3(r,z)=U_{3}(r,z),\ \ \ & (r,z)\in \mn,\\
&T_4(r,z)=S(r,z)-S_0, \ \quad\ \  T_5(r,z)=K(r,z)- K_0,\ \ T_6(r,z)=\Phi(r,z)-\bar\Phi(r), \ & (r,z)\in \mn.
\end{aligned}
\end{equation*}
Define the solution space $\mj_{\delta}$ which consists of ${\bf T}=(T_1,\cdots,T_6)\in \left(C^{2, \alpha}(\overline\mn)\right)^6$ satisfying the  estimate:
\begin{equation}\label{7-1}
||{\bf T}||_{C^{2, \alpha}(\overline\mn)}:=\sum\limits_{i=1}^6||T_i||_{C^{2, \alpha}(\overline\mn)}\leq \delta,
\end{equation}
and the following  compatibility conditions:
\begin{equation}\label{5-2-f}
\p_{z} T_1=\p_{z} T_2=T_3=\p_{z}^2T_3=\p_{z} T_4
=\p_{z} T_5=\p_{z}T_6=0, \ \ {\rm{on}} \ \Sigma_{w}^\pm,
\end{equation}
where $\delta>0$  will be determined later.
\par Given $\hat{\bf T}\in \mj_{\delta}$, we will construct an iterative procedure that generates a new ${\bf T}$, and thus one can define a mapping  from $\mj_{\delta}$ to itself by choosing a suitably small positive constant $\delta$. We first solve the transport equations for $T_2$, $T_4$ and $T_5$ to obtain its expression.   Substituting these into the first two and last equations in \eqref{7-4}, we  derive a deformation-curl-Poisson system for $(T_1, T_3, T_6)$, which is elliptic. By solving the linearized second order elliptic system with mixed  boundary conditions, $(T_1, T_3, T_6)$ can be  uniquely determined. Now we give a detailed derivation of this procedure.

\par{\bf Step 1. Solving the  linearized transport equations.}
\par We first solve the following hyperbolic equations for $(T_2, T_4, T_5)$:
\begin{equation}\label{7-6}
\begin{cases}
\begin{aligned}
&\left(\p_r+\frac{\h T_3}{\h T_1+\bar U_1} \p_{z}\right)(rT_2, T_4, T_5)=0,\qquad\qquad\qquad\qquad \quad {\rm{in}} \ \ \mn,\\
&(T_2, T_4, T_5)(r_0, z)=(U_{2,en},   K_{en},S_{en})(z)-(U_{2,0},K_0,S_0), \ \ \ {\rm{on}} \ \ \Sigma_{en}.
\end{aligned}
\end{cases}
\end{equation}
  For any point $(r,z)\in \overline\mn$,  the functions $( rT_2,T_4,T_5)$ are conserved along the trajectory determined by the following ODE equation:
\begin{equation}\label{7-7}
\begin{cases}
\begin{aligned}
		&\frac{d}{ds} \lambda(s;r,z)=
		\frac{\h T_3}{\h T_1+\bar U_1}(s, \lambda(s;r,z)),
		\\
		& \lambda(r;r,z)=z.
        \end{aligned}
\end{cases}
\end{equation}
Note that $\hat{\bf T}\in \mj_{\delta}$, then it is easy to obtain  that $\lambda(r_0;r,z)\in C^{2,\alpha }(\overline{\mn})$ satisfies
\begin{equation*}
\|\lambda(r_0;r,z) \|_{C^{2,\alpha } ( \overline{\mn} )} \leq C.
\end{equation*}
Thus it holds that
\begin{eqnarray*}
( rT_2,T_4,T_5)( r,z  )=(r_0U_{2,en},   K_{en},S_{en})(\lambda(r_0;r,z))-(J_2,K_0,S_0),
\end{eqnarray*}
from which one obtains
\begin{eqnarray}\label{7-9}
\|T_2,T_4,T_5)\|_{C^{2,\alpha } ( \overline{\mn} )} \leq C\sigma_v.
\end{eqnarray}
 Here the constant $C>0$ depends only on $(\gamma,b_0, \rho_0,U_{1,0},U_{2,0}, S_{0}, E_{0},r_0,r_1)$. One can further use \eqref{1-t-1-r-a-a} to  verify the following compatibility conditions:
\begin{eqnarray}\label{7-10}
\p_{z} T_2=\p_{z} T_4
=\p_{z} T_5=0, \ \ {\rm{on}} \ \Sigma_{w}^\pm.
\end{eqnarray}
\par{\bf Step 2. Solving the linearized elliptic system.}
\par It follows from  \eqref{7-4} and \eqref{1-c-c-r-A} that
\begin{equation}\label{7-11}
\begin{cases}
\p_r(rd_1(r)T_1)+\p_{z}(rd_2(r)T_3)+\p_r(rd_3(r)T_6)=\p_rY_1+\p_{z}Y_2,\ \ &{\rm{in}} \ \ \mn,\\
\p_rT_3-\p_{z}T_1=Y_3,\ \ &{\rm{in}} \ \ \mn,\\
\left(\p_r^2+\frac1 r\p_r+\p_{z}^2\right)T_6+d_3(r)T_1-d_4(r)T_6=Y_4,\ \ &{\rm{in}} \ \ \mn,\\
T_3(r_0,z)=U_{3,en}(z),\ \  T_6(r_0,z)=\Phi_{en}(z),\ \ &{\rm{on}} \ \ \Sigma_{en},\\
T_1(r_1,z)=U_{1,ex}(z)-\bar U_1(r_1), \ \ T_6(r_1,z)=\Phi_{ex}(z)-\bar \Phi(r_1),\ \ &{\rm{on}} \ \ \Sigma_{ex},\\
T_3(r,\pm 1)=\p_{z}T_6(r,\pm 1)=0, \ \ &{\rm{on}} \ \ \Sigma_{w}^\pm,\\
\end{cases}
\end{equation}
where
 \begin{equation*}
\begin{aligned}
&d_1(r)=\bar\rho (1-\bar M_1^2), \quad d_2(r)=\bar \rho,\ \
d_3(r)=\frac{\bar \rho \bar U_1}{\bar c^2}, \ \
  d_4(r)=\frac{\bar \rho}{\bar c^2},\\
&Y_1(r,z)=r\bar U_1\bigg(\frac{\bar\rho\bar U_2}{\bar c^2} T_2+\frac{\bar \rho}{\gamma-1}T_4
- \frac{\bar \rho}{\bar c^2}T_5\bigg)-r\bar U_1\bigg(\mm( T_4+S_0, T_5+K_0,\h T_6+\bar\Phi,\h T_1+\bar U_1,T_2+\bar U_2,\\
&\qquad\qquad\qquad\h T_3 )
-\mm(S_0, K_0,\bar\Phi,\bar U_1,\bar U_2 )
+ \frac{\bar\rho\bar U_1}{\bar c^2}\h T_1+\frac{\bar\rho\bar U_2}{\bar c^2} T_2+\frac{\bar \rho}{\gamma-1}T_4
- \frac{\bar \rho}{\bar c^2}T_5- \frac{\bar \rho}{\bar c^2}\h T_6\bigg)\\
&\quad\qquad\quad-r\bigg(\mm( T_4+S_0, T_5+K_0,\h T_6+\bar\Phi,\h T_1+\bar U_1,T_2+\bar U_2,\h T_3)
-\mm(S_0, K_0,\bar\Phi,\bar U_1,\bar U_2  )\bigg)\h T_1,\\
& Y_2(r,z)
=-r\bigg(\mm( T_4+S_0, T_5+K_0,\h T_6+\bar\Phi,\h T_1+\bar U_1,T_2+\bar U_2,T_3 )-\mm(S_0, K_0,\bar\Phi,\bar U_1,\bar U_2 )\bigg)\h T_3,\\
&Y_3(r,z)=\frac{1}{\bar U_1+\h T_1}
\bigg((T_2+\bar U_2)\p_{z}T_2+\frac{e^{(T_4+S_0)}\mm^{\gamma-1}( T_4+S_0, T_5+K_0,\h T_6+\bar\Phi,\h T_1+\bar U_1,T_2+\bar U_2,\h T_3)
 }{\gamma-1}\\
 &\qquad\qquad\qquad\qquad\quad{\p_{z} T_4}-\p_{z}T_5\bigg),\\
 &Y_4(r,z)
=-\frac{\bar\rho\bar U_2}{\bar c^2} T_2-\frac{\bar \rho}{\gamma-1}T_4
+\frac{\bar \rho}{\bar c^2}T_5+\mm( T_4+S_0, T_5+K_0,\h T_6+\bar\Phi,\h T_1+\bar U_1,T_2+\bar U_2,\h T_3 )\\
&\qquad\qquad-\mm(S_0, K_0,\bar\Phi,\bar U_1,\bar U_2 )
+ \frac{\bar\rho\bar U_1}{\bar c^2}\h T_1+\frac{\bar\rho\bar U_2}{\bar c^2} T_2+\frac{\bar \rho}{\gamma-1}T_4
- \frac{\bar \rho}{\bar c^2}T_5- \frac{\bar \rho}{\bar c^2}\h T_6+b-b_0.\\
\end{aligned}
\end{equation*}
 Then a direct computation shows that
\begin{equation}\label{7-12}
\|( Y_1, Y_2)\|_{C^{2, \alpha}(\overline\mn)}+\| Y_3\|_{C^{1, \alpha}(\overline\mn)}+\| Y_4\|_{C^{0, \alpha}(\overline\mn)}
\leq C(\sigma_v+\delta^2).
\end{equation}
 Note that $\hat{\bf T}$ satisfies the compatibility condition \eqref{5-2}. This, together with \eqref{7-10}, yields
\begin{eqnarray}\label{7-13}
\p_{z} Y_1=Y_2=\p_{z}^2 Y_2
=Y_3=0, \ \ {\rm{on}} \ \Sigma_{w}^\pm.
\end{eqnarray}
\par For the problem \eqref{7-11}, we have the following conclusion.
\begin{proposition}
The linear boundary value  problem \eqref{7-11} has a unique  solution $(T_1,T_3,T_6)\in \left(C^{2,\alpha}(\overline{\mn})\right)^3 $ satisfying the estimate
\begin{equation}\label{7-14}
\begin{aligned}
\|(T_1,T_3,T_6)\|_{C^{2,\alpha}(\overline{\mn})}
&\leq  C\bigg(\|( Y_1, Y_2)\|_{C^{2, \alpha}(\overline\mn)}+\| Y_3\|_{C^{1, \alpha}(\overline\mn)}+\| Y_4\|_{C^{0, \alpha}(\overline\mn)}+\sigma_v\bigg)
\leq C(\sigma_v+\delta^2)
\end{aligned}
 \end{equation}
 for some constant $C>0$     depending only on $(\gamma,b_0, \rho_0,U_{1,0},U_{2,0}, S_{0}, E_{0},r_0,r_1)$. Furthermore, the solution  $(T_1,T_3,T_6) $  satisfies the compatibility conditions:
\begin{equation}\label{5-2}
\p_{z} T_1=T_3=\p_{z}^2T_3=\p_{z}T_6=0, \ \ {\rm{on}} \ \Sigma_{w}^\pm.
\end{equation}
 \end{proposition}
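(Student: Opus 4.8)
The plan is to reduce the div--curl--Poisson system \eqref{7-11} to a weakly coupled pair of second order elliptic equations, to solve it first in $H^1(\mn)$ by an energy argument that exploits the subsonic character of the radial background velocity, and then to upgrade the solution to $C^{2,\alpha}(\overline{\mn})$ by $W^{2,p}$ and Schauder estimates together with an even/odd reflection across the walls $\Sigma_{w}^\pm$. Uniqueness for \eqref{7-11} will then be immediate from the $H^1$ estimate applied to the difference of two solutions with vanishing data.

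For the reduction, observe that since $r_0>0$ there is no coordinate singularity in \eqref{7-11}. I would introduce a scalar potential $\chi$ together with a stream--type corrector $\phi$ solving the auxiliary Dirichlet problem $(\partial_r^2+\partial_z^2)\phi=Y_3$ in $\mn$, $\phi=0$ on $\Sigma_{en}\cup\Sigma_{ex}$ with a suitable boundary condition on $\Sigma_{w}^\pm$; by \eqref{7-12} this gives $\phi\in C^{2,\alpha}(\overline{\mn})$ with the corresponding bound, and the ansatz $T_1=\partial_r\chi-\partial_z\phi$, $T_3=\partial_z\chi+\partial_r\phi$ makes the curl equation $\partial_rT_3-\partial_zT_1=Y_3$ identically satisfied. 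Substituting into the first and third equations of \eqref{7-11} then yields, in divergence form,
\[
\begin{cases}
\partial_r\!\big(rd_1\,\partial_r\chi+rd_3\,T_6\big)+\partial_z\!\big(rd_2\,\partial_z\chi\big)=\partial_r\widetilde Y_1+\partial_z\widetilde Y_2,\\[1mm]
\big(\partial_r^2+\tfrac1r\partial_r+\partial_z^2\big)T_6+d_3\,\partial_r\chi-d_4\,T_6=\widetilde Y_4,
\end{cases}
\]
where $\widetilde Y_1=Y_1+rd_1\partial_z\phi$, $\widetilde Y_2=Y_2-rd_2\partial_r\phi$, $\widetilde Y_4=Y_4+d_3\partial_z\phi$ are controlled through \eqref{7-12}, and the conditions on $\Sigma_{en},\Sigma_{ex},\Sigma_{w}^\pm$ transfer to mixed Dirichlet/oblique conditions for $(\chi,T_6)$. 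The key point is that $d_1=\bar\rho(1-\bar M_1^2)>0$ on $[r_0,r_1]$ --- the radial flow is subsonic by Proposition \ref{pro1} --- so the principal part of the $\chi$--equation is uniformly elliptic on $\overline{\mn}$, and the system above is of elliptic type.

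The heart of the argument is the $H^1$ a priori estimate, which is where the \emph{special structure} of the linearized system enters. Testing the first equation against $\chi$ and the second against a suitable multiplier $\mu(r)\,T_6$, integrating by parts, and using the explicit forms $d_1=\bar\rho(1-\bar M_1^2)$, $d_2=\bar\rho$, $d_3=\bar\rho\bar U_1/\bar c^2$, $d_4=\bar\rho/\bar c^2$, one expects to balance the two cross terms $\int rd_3\,T_6\,\partial_r\chi$ and $\int d_3\,\partial_r\chi\,T_6$, to retain the good signs of $-\int d_4\,T_6^2$ and of the Dirichlet boundary contributions, and to absorb the nonsymmetric lower order terms in $\chi$ by Cauchy's inequality and the positivity of $\bar\rho(1-\bar M_1^2)$ --- in the same spirit as the multiplier $Q$ built in Lemma \ref{pro4}. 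This would give $\|(\chi,T_6)\|_{H^1(\mn)}\le C(\|\widetilde Y_1\|_{L^2(\mn)}+\|\widetilde Y_2\|_{L^2(\mn)}+\|\widetilde Y_4\|_{L^2(\mn)}+\sigma_v)$, and Lax--Milgram applied to the associated bilinear form then produces a unique $H^1$ weak solution. I expect the genuine obstacle to be exactly the choice of the multiplier $\mu(r)$ and the verification that all boundary integrals generated by the mixed conditions on $\Sigma_{en}\cup\Sigma_{ex}\cup\Sigma_{w}^\pm$ carry favorable signs.

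With the weak solution at hand, the regularity bootstrap is standard: interior and boundary $W^{2,p}$ estimates from \cite{GT98} give $(T_1,T_3,T_6)\in W^{1,p}$ for all $p$, hence $C^{0,\alpha}$, then $W^{2,p}$, hence $C^{1,\alpha}$, and the Schauder estimates with $C^{0,\alpha}$ coefficients and the Hölder bounds \eqref{7-12} on the right--hand sides give $C^{2,\alpha}$ in the interior and up to $\Sigma_{en}\cup\Sigma_{ex}$. To reach the walls $\Sigma_{w}^\pm$ and the corners, I would use the wall compatibility conditions \eqref{7-13}: since $Y_1,Y_4$ are even and $Y_2,Y_3$ odd in $z$ near $z=\pm1$, the even extension of $(T_1,T_6)$ and the odd extension of $T_3$ across $z=\pm1$ solve the same system on a $z$--enlarged (or $z$--periodic) slab, to which the Schauder estimate applies and yields the bound \eqref{7-14}. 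Finally, the compatibility conditions \eqref{5-2} for the solution follow from uniqueness: the reflected solution solves the same boundary value problem on $\mn$, hence coincides with $(T_1,T_3,T_6)$, which forces the stated parities and therefore $\partial_zT_1=T_3=\partial_z^2T_3=\partial_zT_6=0$ on $\Sigma_{w}^\pm$.
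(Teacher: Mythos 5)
Your plan matches the paper's proof in essentially every respect: the same Poisson corrector (your $\phi$, the paper's $\psi_1$) absorbs the curl source $Y_3$, the scalar potential (your $\chi$, the paper's $\psi_2$) represents the resulting curl-free field, the problem reduces to a weakly coupled second-order elliptic system, $H^1$ solvability follows from Lax--Milgram, and $C^{2,\alpha}$ regularity is recovered by Schauder estimates combined with even/odd reflection across $\Sigma_w^{\pm}$ using the parities recorded in \eqref{7-13}. The one place where you are more cautious than the paper needs to be is the ``multiplier obstacle'': the paper's point (and the content of Remark 1.7) is precisely that \emph{no} nontrivial multiplier $\mu(r)$ is required, because the cross terms $\pm r d_3\, T_6\,\p_r\chi$ arising from the two equations cancel identically once one tests the first equation against $\chi$ and the second against $-T_6$, leaving the coercive form
\begin{equation*}
\iint_{\mn} r\Big(d_1(\p_r\chi)^2 + d_2(\p_z\chi)^2 + (\p_rT_6)^2 + (\p_zT_6)^2 + d_4\,T_6^2\Big)\,\de r\,\de z
\end{equation*}
with $d_1=\bar\rho(1-\bar M_1^2)>0$, $d_2=\bar\rho>0$, $d_4=\bar\rho/\bar c^2>0$; thus $\mu\equiv 1$ works and the boundary contributions are killed by first shifting $(\chi,T_6)$ by affine extensions of the entrance/exit data so that the remaining test-function space has homogeneous Dirichlet/Neumann conditions, as the paper does before invoking Lax--Milgram. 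Apart from this, your $W^{2,p}\to C^{1,\alpha}\to C^{2,\alpha}$ bootstrap and your derivation of \eqref{5-2} via uniqueness of the reflected solution are correct and coincide with the paper's argument.
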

 \begin{proof}
 \par The proof is divided into three steps.

 \par  { \bf Step 2.1. Homogenize the boundary data.}
  \par We first consider the following problem
\begin{equation}\label{7-15}
\begin{cases}
(\p_r^2+\p_{z}^2)\psi_1=Y_3,\ \ &{\rm{in}} \ \ \mn,\\
\p_r\psi_1(r_0, z)=0,\ \ &{\rm{on}} \ \ \Sigma_{en},\\
\psi_1(r_1, z)=0,\ \ &{\rm{on}} \ \ \Sigma_{ex},\\
\psi_1(r, \pm 1)=0, \ \ &{\rm{on}} \ \ \Sigma_{w}^\pm.\\
\end{cases}
\end{equation}
Then the symmetric extension  argument shows that \eqref{7-15} has a unique solution $\psi_1\in C^{3,\alpha}(\overline{\mn}) $ satisfying the estimate
\begin{equation}\label{7-16}
\|\psi_1\|_{C^{3, \alpha}(\overline\mn)}\leq C\|Y_3\|_{C^{1, \alpha}(\overline\mn)}\leq C(\sigma_v+\delta^2),
\end{equation}
and the  compatibility conditions
\begin{equation}\label{7-17}
\p_r\psi_1= \p_{z}^2\psi_1=\p_{z}^2\p_r\psi_1=0,  \ \ {\rm{on}} \ \Sigma_{w}^\pm.
\end{equation}
\par Define
\begin{equation*}
\e T_1=T_1+\p_{z}\psi_1, \ \ \e T_3=T_3-\p_r\psi_1, \ \ {\rm{in}} \ \ \mn.
\end{equation*}
Then \eqref{7-11} is transformed into
\begin{equation}\label{7-18}
\begin{cases}
\p_r(rd_1(r)\e T_1)+\p_{z}(rd_2(r)\e T_3)+\p_r(rd_3(r)T_6)=\p_r Y_5+\p_{z}  Y_6,\ \ &{\rm{in}} \ \ \mn,\\
\p_r\e T_3-\p_{z}\e T_1=0,\ \ &{\rm{in}} \ \ \mn,\\
\p_r(r\p_rT_6)+ r\p_{z}^2T_6+rd_3(r)T_1-rd_4(r)T_6=Y_7,\ \ &{\rm{in}} \ \ \mn,\\
\e T_3(r_0,z)=U_{3,en}(z),\ \  T_6(r_0,z)=\Phi_{en}(z),\ \ &{\rm{on}} \ \ \Sigma_{en},\\
\e T_1(r_1,z)=U_{1,ex}(z)-\bar U_1(r_1), \ \ T_6(r_1,z)=\Phi_{ex}(z)-\bar \Phi(r_1),\ \ &{\rm{on}} \ \ \Sigma_{ex},\\
\e T_3(r,\pm 1)=\p_{z}T_6(r,\pm 1)=0, \ \ &{\rm{on}} \ \ \Sigma_{w}^\pm,\\
\end{cases}
\end{equation}
where \begin{equation*}
Y_5(r,z)=Y_1+rd_1\p_{z}\psi_1, \ \ Y_6(r,z)=Y_2-rd_2\p_r\psi_1, \ \
Y_7(r,z)=rY_4+rd_3\p_{z}\psi_1.
\end{equation*}
It follows from the second equation of \eqref{7-18} that there exists a unique potential function $\psi_2(r, z)$ such that $\e T_1=\p_r\psi_2 $ and $  \e T_3=\p_{z}\psi_2$.  Then $\psi_2$ and $ T_6$  satisfy the following elliptic equations:
\begin{equation}\label{7-19}
\begin{cases}
\p_r(rd_1(r)\p_r\psi_2)+\p_{z}(rd_2(r)\p_{z}\psi_2)+\p_r(rd_3(r)T_6)=\p_r Y_5+\p_{z} Y_6,\ \ &{\rm{in}} \ \ \mn,\\
\p_r(r\p_rT_6)+ r\p_{z}^2T_6+rd_3(r)\p_r\psi_2-rd_4(r)T_6=Y_7,\ \ &{\rm{in}} \ \ \mn,\\
\p_{z}\psi(r_0,z)=U_{3,en}(z),\ \  T_6(r_0,z)=\Phi_{en}(z),\ \ &{\rm{on}} \ \ \Sigma_{en},\\
\p_r\psi_2(r_1,z)=U_{1,ex}(z)-\bar U_1(r_1), \ \ T_6(r_1,z)=\Phi_{ex}(z)-\bar \Phi(r_1),\ \ &{\rm{on}} \ \ \Sigma_{ex},\\
\p_{z}\psi_2(r,\pm 1)=\p_{z}T_6(r,\pm 1)=0, \ \ &{\rm{on}} \ \ \Sigma_{w}^\pm.\\
\end{cases}
\end{equation}
Set
\begin{eqnarray*}
\psi(r,z)=\psi_2(r,z)-\int_{-1}^sU_{3,en}(s)\de s,\  \ \Psi(r,z)=T_6(r,z)-\phi(r,z), \ \ (r,z)\in \mn
\end{eqnarray*}
with
\begin{eqnarray*}
\phi(r,z)=\bigg(\frac{r_1-r}{r_1-r_0}\Phi_{en}(z)+ \frac{r-r_0}{r_1-r_0} (\Phi_{ex}(z)-\bar \Phi(r_1))\bigg), \ \ (r,z)\in \mn.
\end{eqnarray*}
Then $(\psi,\Psi)$ satisfies
\begin{equation}\label{7-20}
\begin{cases}
\p_r(rd_1(r)\p_r\psi)+\p_{z}(rd_2(r)\p_{z}\psi)+\p_r(rd_3(r)\Psi)=\p_r \e Y_5+\p_{z} \e Y_6,\ \ &{\rm{in}} \ \ \mn,\\
\p_r(r\p_r\Psi)+ r\p_{z}^2\Psi+rd_3(r)\p_r\psi-rd_4(r)\Psi=\e Y_7,\ \ &{\rm{in}} \ \ \mn,\\
\psi(r_0,z)=0,\ \  \Psi(r_0,z)=0,\ \ &{\rm{on}} \ \ \Sigma_{en},\\
\p_r\psi(r_1,z)=U_{1,ex}(z)-\bar U_1(r_1), \ \ \Psi(r_1,z)=0,\ \ &{\rm{on}} \ \ \Sigma_{ex},\\
\p_{z}\psi(r,\pm 1)=\p_{z}\Psi(r,\pm 1)=0, \ \ &{\rm{on}} \ \ \Sigma_{w}^\pm,\\
\end{cases}
\end{equation}
where \begin{equation*}
\e Y_5(r,z)=Y_5-rd_3\p_{r}\phi, \ \ \e Y_6(r,z)= Y_6-rd_2U_{3,en}, \ \
\e Y_7(r,z)=Y_7+rd_4\phi-\p_r(r\p_r\phi)+ r\p_{z}^2\phi.
\end{equation*}
Then a simple computation yields
\begin{equation}\label{7-21}
\|( \e Y_5, \e Y_6)\|_{C^{2, \alpha}(\overline\mn)}+\| \e Y_7\|_{C^{0, \alpha}(\overline\mn)}
\leq C(\sigma_v+\delta^2).
\end{equation}
\par  { \bf Step 2.2. The $C^{1,\alpha}(\overline\mn)$ estimates for $ (\psi,\Psi)$.} \par Define the following function space
\begin{eqnarray}\nonumber
\mathbb{H}=\left\{(\eta_1,\eta_2)\in [H^1(\mn]^2: \eta_1=0 \ {{\rm{on}}} \ \Sigma_{ex},\ \eta_2=0 \ {{\rm{on}}} \ \Sigma_{en}\cup\Sigma_{ex}
\right\}.
\end{eqnarray}
We have the following weak formulation for \eqref{7-20}: for any $(\eta_1,\eta_2)\in \mathbb{H}$, there holds
\begin{eqnarray}\label{7-22}
\mathcal{I}((\psi,\Psi),(\eta_1,\eta_2))= \mathcal{S}(\psi,\Psi),
\end{eqnarray}
where \begin{equation*}
\begin{aligned}
&\mathcal{I}((\psi,\Psi),(\eta_1,\eta_2))=\iint_{\mn}\bigg(
(rd_1(r)\p_r\psi)\p_r\eta_1+(rd_2(r)\p_{z}\psi)\p_{z}\eta_1+(rd_3(r)\Psi)
\p_r\eta_1\\
&\qquad\qquad\qquad\qquad\ +(r\p_r\Psi)\p_r\eta_2+ r\p_{z}\Psi\p_{z}\eta_2-rd_3(r)\p_r\psi\eta_2+rd_4(r)\Psi\eta_2\bigg)\de r \de z,\\
&\mathcal{S}(\psi,\Psi)=\iint_{\mn} \bigg(\e Y_5\p_r\eta_1+ \e Y_6\p_{z}\eta_1-\e Y_7\eta_2\bigg)\de r \de z-\int_{-1}^1r_1(U_{1,ex}(z)-\bar U_1(r_1))\eta(r_1,z)
\de z\\
&\quad\qquad\quad -\int_{-1}^1\left(\e Y_5\eta_1(r_1, z)\right)\de z-\int_{r_0}^{r_1}\left(\e Y_6\eta_1(r, 1)-\e Y_6\eta_1(r, -1)\right)\de r.
\end{aligned}
\end{equation*}
 Obviously, the bilinear form $\mathcal{I}[\cdot,\cdot]$ and the linear operator $\mathcal{S}$ are bounded in  $\mathbb{H}\times\mathbb{H}$ and $\mathbb{H}$, respectively. More importantly, there exists a special structure generating from the Euler-Poisson system so that the mixed terms in $\mathcal{I}[(\psi,\Psi),(\psi,\Psi)]$ can be canceled. That is, for some positive constant $C$, the coercivity of the bilinear operator $\mathcal{I}[(\psi,\Psi),(\psi,\Psi)]$ is valid,
\begin{equation}\label{7-23}
\begin{aligned}
\mathcal{I}((\psi,\Psi),(\psi,\Psi))&=\iint_{\mn}\bigg(
rd_1(r)(\p_r\psi)^2+rd_2(r)(\p_{z}\psi)^2+r(\p_r\Psi)^2
+ r(\p_{z}\Psi)^2+rd_4(r)\Psi^2\bigg)\de r \de z\\
&=\iint_{\mn}\bigg(
r\bar\rho (1-\bar M_1^2)(\p_r\psi)^2+r\bar\rho(\p_{z}\psi)^2+r(\p_r\Psi)^2
+ r(\p_{z}\Psi)^2+r\frac{\bar \rho}{\bar c^2}\Psi^2\bigg)\de r \de z\\
&\geq  \iint_{\mn}C(|\n\phi|^2+|\n\Psi|^2)\de r\de z.
\end{aligned}
\end{equation}
This, together with the Poincar\'{e} inequality, yields
\begin{equation}\label{7-25}
\mathcal{I}((\psi,\Psi),(\psi,\Psi))\geq C \|( \phi,\Psi)\|_{H^1(\mn)}^2.
\end{equation}
\par By the Lax-Milgram theorem, there exists a unique weak solution $( \psi,\Psi)\in\mathbb{H}$ to \eqref{7-20}.
Then one can adjust the proof of  in \cite[Lemmas 3.5 and 3.6]{BDX16} to show that the weak solution $(\psi,\Psi)\in\mathbb{H}$ satisfies
\begin{equation}\label{7-27}
\begin{aligned}
 \|( \psi,\Psi)\|_{C^{1,\alpha}(\overline{\mathcal{N}})} \leq C\bigg(\|( \e Y_5, \e Y_6)\|_{C^{2, \alpha}(\overline\mn)}+\| \e Y_7\|_{C^{0, \alpha}(\overline\mn)}\bigg)\leq C(\sigma_v+\delta^2).
\end{aligned}
\end{equation}
 Then one can follow from the expressions of $ \psi $, $ \Psi $, $\e Y_i$ $ (i=5,6,7) $ and combine  the estimate \eqref{7-16} to obtain that
\begin{equation}\label{C1al-1}
	\|(T_1,T_3)\|_{C^{0,\alpha}(\overline{\mathcal{N}})}+	\|T_6\|_{C^{1,\alpha}(\overline{\mathcal{N}})}
		\leq  C\bigg(\|( Y_1, Y_2)\|_{C^{2, \alpha}(\overline\mn)}+\| Y_3\|_{C^{1, \alpha}(\overline\mn)}+\| Y_4\|_{C^{0, \alpha}(\overline\mn)}+\sigma_v\bigg)
\leq C(\sigma_v+\delta^2).
	\end{equation}
The above  constant $C>0$     depends only on $(\gamma,b_0, \rho_0,U_{1,0},U_{2,0}, S_{0}, E_{0},r_0,r_1)$.
\par  { \bf Step 2.3. The $C^{2,\alpha}(\overline\mn)$ estimates for $ (T_1,T_3,T_6)$.}
\par With the estimate \eqref{C1al-1}, we consider the following elliptic equation for $T_6$:
\begin{equation}\label{7-c-e}
\begin{cases}
\left(\p_r^2+\frac1 r\p_r+\p_{z}^2\right)T_6-d_4(r)T_6=Y_4-d_3(r)T_1,\ \ &{\rm{in}} \ \ \mn,\\
  T_6(r_0,z)=\Phi_{en}(z),\ \ &{\rm{on}} \ \ \Sigma_{en},\\
 T_6(r_1,z)=\Phi_{ex}(z)-\bar \Phi(r_1),\ \ &{\rm{on}} \ \ \Sigma_{ex},\\
\p_{z}T_6(r,\pm 1)=0, \ \ &{\rm{on}} \ \ \Sigma_{w}^\pm.\\
\end{cases}
\end{equation}
 By  the compatibility condition \eqref{7-13}, one  can use the standard Schauder
estimates  in \cite{GT98} and the method of reflection with respect to $\Sigma_w^\pm $ to  obtain that
\begin{equation}\label{C1al-1-1}
		\|T_6\|_{C^{2,\alpha}(\overline{\mathcal{N}})}
		\leq  C\bigg(\|T_1\|_{C^{0, \alpha}(\overline\mn)}+\| Y_4\|_{C^{0, \alpha}(\overline\mn)}+\sigma_v\bigg)
\leq C(\sigma_v+\delta^2).
	\end{equation}
\par Next,  we consider the following linear system for $T_1$ and $ T_3$:
\begin{equation}\label{7-c}
\begin{cases}
\p_r(rd_1(r)T_1)+\p_{z}(rd_2(r)T_3)=\p_r(Y_1-rd_3(r)T_6)+\p_{z}Y_2,\ \ &{\rm{in}} \ \ \mn,\\
\p_rT_3-\p_{z}T_1=Y_3,\ \ &{\rm{in}} \ \ \mn,\\
T_3(r_0,z)=U_{3,en}(z),\ \ &{\rm{on}} \ \ \Sigma_{en},\\
T_1(r_1,z)=U_{1,ex}(z)-\bar U_1(r_1),  &{\rm{on}} \ \ \Sigma_{ex},\\
T_3(r,\pm 1)=0, \ \ &{\rm{on}} \ \ \Sigma_{w}^\pm.\\
\end{cases}
\end{equation}
By  the standard symmetric extension technique, one can apply the Schauder estimates in \cite{GT98} to  yield that
$(T_1, T_3)\in \left(C^{2, \alpha}(\overline{\mathcal{N}})\right)^2$ with the following estimate
\begin{eqnarray}\label{C1al-x}
		\|(T_1, T_3)\|_{C^{2,\alpha}(\overline{\mathcal{N}})}
		 \leq C\bigg(\|( Y_1, Y_2,T_6)\|_{C^{2, \alpha}(\overline\mn)}+\| Y_3\|_{C^{1, \alpha}(\overline\mn)}+\sigma_v\bigg)
\leq C(\sigma_v+\delta^2).
	\end{eqnarray}
  The above constant $C>0$     depends only on $(\gamma,b_0, \rho_0,U_{1,0},U_{2,0}, S_{0}, E_{0},r_0,r_1)$. Furthermore, one can verify that
  \begin{equation}\label{5-2-s}
\p_{z} T_1=T_3=\p_{z}^2T_3=\p_{z}T_6=0, \ \ {\rm{on}} \ \Sigma_{w}^\pm.
\end{equation}
  The proof of this proposition is completed.
\end{proof}
\par Up to know, for given $\hat {{\bf T}}\in \mj_\delta$,
     we have constructed a new $ {{\bf T}} $.
   Define a map $ \mathfrak{P} $ as follows
\begin{equation}\label{7-47}
\mathfrak{P}( \hat {{\bf T}})= {{\bf T}},
\quad{\rm{ for}} \ \hat {{\bf T}}\in \mj_\delta.
\end{equation}
The estimate \eqref{7-9}, together with \eqref{7-14}, yields
\begin{equation}\label{4-46}
\sum_{i=1}^5\|T_i\|_{C^{2,\alpha}(\overline{\mn})}	\leq \mc_1^\natural(\sigma_v+\delta^2)
\end{equation}
for a constant $\mc_1^\natural>0$     depending only on $(\gamma,b_0, \rho_0,U_{1,0},U_{2,0}, S_{0}, E_{0},r_0,r_1)$. Let
\begin{equation*}
\varepsilon_4=\frac{1}{2(2(\mc_1^\natural)^2+1)} \ \ {\rm{and}}\ \ \delta= 2\mc_1^\natural\sigma_v.
\end{equation*}
Then if $ \sigma_v \leq \varepsilon_4  $,  one has
\begin{equation*}
\sum_{i=1}^5\|T_i\|_{C^{2,\alpha}(\overline{\mn})}	\leq \frac12 \delta+2(\mc_1^\natural)^2\sigma_v\delta\leq \delta.
\end{equation*}
Hence $ \mathfrak{P} $ maps $ \mj_\delta $ into itself.
\par {\bf Step 3. The contraction of the mapping $\mathfrak{P}$.}
\par In the following, we will show that $ \mathfrak{P} $ is a
  contraction mapping in $ \mj_\delta$. Let $\hat {{\bf T}}^{(k)}\in\mj_\delta$,  $ k=1,2 $, one has $  {{\bf T}}^{(k)} =\mathfrak{P}(\hat {{\bf T}}^{(k)}) $.
  Define
\begin{equation*}
{\bf D}= {{\bf T}}^{(1)}-{{\bf T}}^{(2)},\quad \hat{{\bf D}}= \hat{{{\bf T}}}^{(1)}-\hat{{{\bf T}}}^{(2)}.
\end{equation*}
Then it follows from \eqref{7-6} that
\begin{equation}\label{7-6-d}
\begin{cases}
\left(\p_r+\frac{\h T_3^{(1)}}{\h T_1^{(1)}+\bar U} \p_{z}\right)(rD_2, D_4, D_5)=\bigg(\frac{\h T_3^{(1)}}{\h T_1^{(1)}+\bar U}-\frac{\h T_3^{(2)}}{\h T_1^{(2)}+\bar U}\bigg)\p_{z}(rT_2^{(2)}, T_4^{(2)}, T_5^{(2)})\ \ &{\rm{in}} \ \ \mn,\\
(D_2, D_4, D_5)(r_0, z)=0,  \ &{\rm{on}} \ \ \Sigma_{en}.
\end{cases}
\end{equation}
By the characteristic method, there holds
\begin{eqnarray}\label{4-105}
\|(D_2, D_4, D_5)\|_{C^{1,\alpha}(\overline\mn)}  \leq C\|{\bf T}^{(2)}\|_{C^{2,\alpha}(\overline \mn)} \|\hat{{\bf D}}\|_{C^{1,\alpha}(\overline\mn)}
\leq C \delta \|\hat{{\bf D}}\|_{C^{1,\alpha}(\overline\mn)}.
\end{eqnarray}
Next, it follows from \eqref{7-11} that
\begin{equation}\label{7-11-d}
\begin{cases}
\p_r(rd_1(r)D_1)+\p_{z}(rd_2(r)D_3)+\p_r(rd_3(r)D_6)=\p_r(Y_1^{(1)}-Y_1^{(2)})
+\p_{z}(Y_2^{(1)}-Y_2^{(2)}),\ \ &{\rm{in}} \ \ \mn,\\
\p_rD_3-\p_{z}D_1=Y_3^{(1)}-Y_3^{(2)},\ \ &{\rm{in}} \ \ \mn,\\
\left(\p_r^2+\frac1 r\p_r+\p_{z}^2\right)D_6+d_3(r)D_1-d_4(r)D_6=Y_4^{(1)}-Y_4^{(2)},\ \ &{\rm{in}} \ \ \mn,\\
D_3(r_0,z)=  D_6(r_0,z)=0,\ \ &{\rm{on}} \ \ \Sigma_{en},\\
D_1(r_1,z)=D_6(r_1,z)=0,\ \ &{\rm{on}} \ \ \Sigma_{ex},\\
D_3(r,\pm 1)=\p_{z}D_6(r,\pm 1)=0, \ \ &{\rm{on}} \ \ \Sigma_{w}^\pm,\\
\end{cases}
\end{equation}
where
 $ Y_j^{(i)}$ ($ i=1,2$, $ j=1,2,3,4$) are defined as in \eqref{7-11}  with $(\hat T_1, T_2,\hat T_3,T_4,T_5,\hat T_6)$ replaced by  $(\hat T_1^{(i)}, T_2^{(i)},\hat T_3^{(i)},T_4^{(i)},T_5^{(i)},\hat T_6^{(i)})$.
 Therefore, we have the following estimate:
 \begin{equation}\label{7-14-D}
\begin{aligned}
\|(D_1,D_2,D_6)\|_{C^{1,\alpha}(\overline{\mn})}
&\leq  C\bigg(\|( Y_1^{(1)}-Y_1^{(2)},Y_2^{(1)}-Y_2^{(2)})\|_{C^{1, \alpha}(\overline\mn)}+\| Y_3^{(1)}-Y_3^{(2)}\|_{C^{1, \alpha}(\overline\mn)}\\
&\qquad+\| Y_4^{(1)}-Y_4^{(2)}\|_{C^{0, \alpha}(\overline\mn)}\bigg)\leq  C \delta \|\hat{{\bf D}}\|_{C^{1,\alpha}(\overline\mn)}.
\end{aligned}
 \end{equation}
 \par Collecting all the above  estimates leads to
\begin{eqnarray}\label{contraction}
\|{\bf D}\|_{C^{1,\alpha}(\overline\mn)}\leq \mc_{2}^\natural\delta\|\hat{{\bf D}}\|_{C^{1,\alpha}(\overline\mn)},
\end{eqnarray}
where  $\mc_2^\natural>0$   is a  constant  depending only on $(\gamma,b_0, \rho_0,U_{1,0},U_{2,0}, E_{0},r_0,r_1)$. Setting $$\sigma_1^\star=\min\left\{\varepsilon_4, \frac{1}{3\mc_1^\natural\mc_2^\natural}\right\}.$$ Then for any $\sigma_v\leq \sigma_1^\star$, $\mc_{2}^\natural\delta= 2\mc_1^\natural \mc_2^\natural\sigma_v\leq \frac23$, hence the mapping $\mathfrak{P}$ is a contraction with respect to $C^{1,\alpha}$ norm   so that there exists a unique fixed point   $ {\bf T}\in \mj_\delta$. The proof of Theorem \ref{th3} is completed.
\par {\bf Acknowledgement.} Wang is  partially supported by National Natural Science Foundation of China  11925105, 12471221.
\par {\bf Data availability.} No data was used for the research described in the article.
    \par {\bf Conflict of interest.} This work does not have any conflicts of interest.


\begin{thebibliography}{1}
\bibitem{Bae1}
 Bae, M.,  Duan, B.,  Xie, C.:  The steady Euler-Poisson system and accelerating flows with transonic $C^1$-transitions.  arXiv:2308.04694v1.
\bibitem{BDX14}
 Bae, M.,  Duan, B.,  Xie, C.: Subsonic solutions for steady Euler-Poisson system in two-dimensional nozzles. SIAM J. Math. Anal. 46 (2014), no. 5, 3455-3480.
\bibitem{BDX15}
 Bae, M.,  Duan, B.,  Xie, C.:  Two-dimensional subsonic flows with self-gravitation in bounded domain. Math. Models Methods Appl. Sci. 25 (2015), no. 14, 2721-2747.
 \bibitem{BDX16}
 Bae, M.,  Duan, B.,  Xie, C.:  Subsonic flow for the multidimensional Euler-Poisson system. Arch. Ration. Mech. Anal. 220 (2016), no. 1, 155-191.

\bibitem{BDXJ21}
 Bae, M.,  Duan, B.,   Xiao J., Xie, C.:
 Structural stability of supersonic solutions to the Euler-Poisson system. Arch. Ration. Mech. Anal. 239 (2021), no. 2, 679-731.
\bibitem{BP21}
 Bae, M.,  Park, H.: Three-dimensional supersonic flows of Euler-Poisson system for potential flow. Commun. Pure Appl. Anal. 20 (2021), no. 7-8, 2421-2440.
 \bibitem{BP23}
Bae, M., Park, H.:
 Supersonic flows of the Euler-Poisson system in three-dimensional cylinders.
 arXiv:2302.03308.
\bibitem{BW18}
 Bae, M., Weng, S.:  3-D axisymmetric subsonic flows with nonzero swirl for the compressible Euler-Poisson system.  Ann. Inst. Henri Poincar\'{e}, Anal. Non Lin\'{e}atre  35 (1) (2018) 161-186.
\bibitem{CF48}
   Courant, R., Friedrichs, K.: Supersonic Flow and Shock Waves. Interscience Publishers Inc.: New York, 1948.
   \bibitem{CX14}
 Chen, C., Xie, C.: Three dimensional steady subsonic Euler flows in bounded nozzles. J. Differential Equations 256 (2014), no. 11, 3684-3708.



   \bibitem{CMZ23}
 Chen, L.,  Mei, M.,   Zhang, G.: Radially symmetric spiral flows of the compressible Euler-Poisson system for semiconductors. J. Differential Equations, 373 (2023), 359-388.
 \bibitem{DWX14}
Du, L., Weng, S., Xin, Z.: Subsonic irrotational flows in a finitely long nozzle with variable end pressure.  Comm. Partial Differential Equations 39 (2014), no. 4, 666-695.

\bibitem{DXY11}
Du, L., Xin, Z., Yan, W.: Subsonic flows in a multi-dimensional nozzle. Arch. Ration. Mech. Anal. 201 (2011), no. 3, 965-1012.
\bibitem{DWY23}
Duan, B., Wang, C., Xing, Y.: Supersonic Euler-Poisson flows in divergent nozzles. J. Differential Equations 371 (2023), 598-628.
\bibitem{GT98}
 Gilbarg, D., Trudinger, N. S.: Elliptic partial differential equations of second order, 2nd ed.
Grundlehren Math. Wiss. 224, Springer, Berlin, 1998.
\bibitem{WX2013}
 Wang, C.,   Xin, Z.: On a degenerate free boundary problem and continuous subsonic-sonic flows in a convergent nozzle. Arch. Ration. Mech. Anal. 208 (2013), no. 3, 911-975.

\bibitem{WX15}
 Wang, C.,   Xin, Z.: Global smooth supersonic flows in infinite expanding nozzles. SIAM J. Math. Anal. 47 (2015), no. 4, 3151-3211.
\bibitem{wang15}
 Wang. C.:  Continuous subsonic-sonic flows in a general nozzle. J. Differential Equations 259 (2015), no. 7, 254-275.
\bibitem{WX2016}
 Wang, C.,   Xin, Z.: On sonic curves of smooth subsonic-sonic and transonic flows. SIAM J. Math. Anal. 48 (2016), no. 4, 2414-2453.

\bibitem{WX2019}
  Wang, C.,   Xin, Z.: Smooth transonic flows of Meyer type in de Laval nozzles. Arch. Ration. Mech. Anal. 232 (2019), no. 3, 1597-1647.

\bibitem{WX2020}
  Wang, C.,   Xin, Z.: Regular subsonic-sonic flows in general nozzles. Adv. Math. 380 (2021), Paper No. 107578, 56 pp.
\bibitem{wang22}
Wang. C.:  Global smooth sonic-supersonic flows in a class of critical nozzles.
SIAM J. Math. Anal. 54 (2022), no. 2, 1820-1859.
\bibitem{WZ25-1}
 Wang, C., Zhang, Z.: Structural stability of cylindrical supersonic solutions to the steady Euler-Poisson system. Revised by  Journal of the London Mathematical Society.
 \bibitem{WZ25}
 Wang, C., Zhang, Z.: Structural stability of smooth axisymmetric subsonic spiral flows with self-gravitation in a concentric cylinder. J. Differential Equations 438 (2025), 113356.
 \bibitem{WS15}
 Weng, S.: A new formulation for the 3-D Euler equations with an application to subsonic flows in a cylinder. Indiana Univ. Math. J. 64 (2015), no. 6, 1609-1642.
 \bibitem{W14}
 Weng, S.: On steady subsonic flows for Euler-Poisson models. SIAM J. Math. Anal. 46 (2014), no. 1, 757-779.
 \bibitem{WS19}
Weng, S.: A deformation-curl-Poisson decomposition to the three dimensional steady Euler-Poisson system with applications. J. Differential Equations 267 (2019), no. 11, 6574-6603.
 \bibitem{WXY21}
 Weng, S., Xin, Z., Yuan, H.: Steady compressible radially symmetric flows with nonzero angular velocity in an annulus. J. Differential Equations 286 (2021), 433-454.
 \bibitem{WYZ21}
 Weng, S., Xin, Z., Yuan, H.: On some smooth symmetric transonic flows with nonzero angular velocity and vorticity. Math. Models Methods Appl. Sci. 31 (2021), no. 13, 2773-2817.
 \bibitem{WZ22}
 Weng, S., Zhang, Z.: Two dimensional subsonic and subsonic-sonic spiral flows outside a porous body. Acta Math. Sci. Ser. B (Engl. Ed.) 42 (2022), no. 4, 1569-1584.
\bibitem{WYZ24}
Weng, S., Yang, W., Zhang, N.: Smooth subsonic and transonic spiral  flows with nonzero vorticity to steady Euler-Poisson system in  concentric cylinders. J. Nonlinear Sci. 34 (2024), no. 4, Paper No. 76.


\bibitem{WX24}
 Weng, S., Xin, Z.: Smooth transonic flows with nonzero vorticity to a quasi two dimensional steady Euler flow model. Arch. Ration. Mech. Anal. 248 (2024), no. 3, Paper No. 49, 62 pp.
 \end{thebibliography}
\end{document}